\newcommand{\C}{\mathbb{C}}
\newcommand{\R}{\mathbb{R}}
\newcommand{\Z}{\mathbb{Z}}
\newcommand{\N}{\mathbb{N}}
\newcommand{\Cl}{\operatorname{Cl}}
\newcommand{\s}{\operatorname{s}}
\newcommand{\rk}{\operatorname{rk}}
\newcommand{\kL}{\mathfrak{k}}
\newcommand{\mL}{\mathfrak{m}}
\newcommand{\aL}{\mathfrak{a}}
\newcommand{\gL}{\mathfrak{g}}
\newcommand{\pL}{\mathfrak{p}}
\newcommand{\nf}{\mathfrak{n}}
\newcommand{\bL}{\mathfrak{b}}
\newcommand{\uu}{\operatorname{u}}
\newcommand{\parab}{\operatorname{par}}
\newcommand{\Id}{\operatorname{Id}}
\newcommand{\Spin}{\operatorname{Spin}}
\newcommand{\Hom}{\operatorname{Hom}}
\newcommand{\SU}{\operatorname{SU}}
\newcommand{\Ad}{\operatorname{Ad}}
\newcommand{\Sl}{\operatorname{SL}}
\newcommand{\SO}{\operatorname{SO}}
\newcommand{\Iim}{\operatorname{Im}}
\newcommand{\cc}{\operatorname{c}}
\newcommand{\CC}{\operatorname{C}}
\newcommand{\gr}{\operatorname{gr}}
\newcommand{\vol}{\operatorname{vol}}
\newcommand{\Tr}{\operatorname{Tr}}
\newcommand{\tr}{\operatorname{tr}}
\newcommand{\End}{\operatorname{End}}
\newcommand{\Real}{\operatorname{Re}}
\newtheorem {thrm}{Theorem}[section]
\newtheorem {prop}[thrm] {Proposition}
\newtheorem {lem}[thrm] {Lemma}
\newtheorem {kor}[thrm]{Corollary}
\theoremstyle{definition}
\theoremstyle{remark}
\newtheorem {bmrk}[thrm] {Remark}
\begin{document}

\author{Jonathan Pfaff}
\address{Universit\"at Bonn\\
Mathematisches Institut\\
Endenicher Alle 60\\
D -- 53115 Bonn, Germany}
\email{pfaff@math.uni-bonn.de}

\title[]{Selberg zeta functions on odd-dimensional hyperbolic manifolds of
finite
volume}

\begin{abstract}
We study Selberg zeta functions $Z(s,\sigma)$  
associated 
to locally homogeneous vector bundles over the unit-sphere bundle   
of a complete odd-dimensional hyperbolic manifold of finite volume. We assume a
certain condition on the fundamental group of the manifold.
A priori, the Selberg zeta functions are defined only for $s$ in some 
right half-space of $\C$. We will prove that for any locally homogeneous bundle
the functions $Z(s,\sigma)$ 
have a meromorphic continuation to $\C$ and we will give a complete 
description of their singularities in terms of spectral data of 
the underlying manifold. Our work generalizes results of 
Bunke and Olbrich to the non-compact situation. As an application of our results one can 
compare the normalized Reidemeister torsions on hyperbolic 3 manifolds with
cusps which were introduced by Menal-Ferrer 
and Porti to the corresponding regularized analytic torsions.
\end{abstract}

\maketitle
\setcounter{tocdepth}{1}
\section{Introduction}
\setcounter{equation}{0}
Let $X$ be a complete hyperbolic manifold of finite volume and of odd dimension
$d$. Then it is possible to encode geometric data of $X$ associated to its
geodesic flow
into dynamical zeta functions, called Selberg zeta functions. These 
functions are associated to locally homogeneous vector bundles over the unit
sphere bundle $SX$ of $X$. 
They are defined as an infinite product which converges only in 
some right half-space of the complex plane. The purpose 
of this paper is to prove a meromorphic continuation of the zeta functions 
associated to any locally homogeneous vector bundle to $\C$ and to relate
their singularities 
to spectral data of $X$. In the non-compact case, such a result had previously
been known only for a finite number
of special bundles. 

We shall now define the Selberg zeta functions more precisely. For further 
details we refer the reader to section \ref{secselz}. The tangent bundle $TSX$
of $SX$ is a
locally 
homogeneous vector bundle and we fix an invariant metric. Let $\Phi$ be
the geodesic flow of $X$ acting on $SX$.
Let $d\Phi$ denote the differential of $\Phi$ with respect to $SX$. Then $\Phi$
has the
Anosov
property. This means that there is a 
$d\Phi$-invariant
splitting 
\begin{align}\label{splitting}
TSX=T^{\s}SX \oplus T^{\uu} SX\oplus T^0 SX,
\end{align}
where $d\Phi$ is exponentially
shrinking on $T^{\s}SX$ and exponentially expanding on $T^{\uu}SX$ as 
$t\to\infty$ with respect to the metric and where $T^0 SX$ is the
one-dimensional subspace tangent to the
flow. 

Denote by 
$\mathcal{C}(X)$ the set of non-trivial closed geodesics on $X$
parametrized by arc-length and for 
$c\in\mathcal{C}(X)$ let $\ell(c)$ be its length. The set $\mathcal{C}(X)$ 
corresponds bijectively to the closed orbits of $\Phi$. A closed geodesic $c\in
\mathcal{C}(X)$ 
is called prime if it is the shortest among the closed geodesics having the 
same image as $c$.
The set of prime geodesics will be denoted by $\mathcal{PC}(X)$.
For $c\in\mathcal{C}(X)$ let $P_c:=d\Phi|_{(\ell(c),\dot{c}(0))}$ be its
monodromy
map, also 
called the Poincar\'e map of $c$. Then
$P_c$ respects the splitting \eqref{splitting} and its restriction to the fibre
of $T^{\s}
SX$ over $\dot{c}(0)$ will be denoted by $P_c^{\s}$.

Now we recall that $X$ can be realized as $X=\Gamma\backslash \widetilde X$,
where $\widetilde X=G/K$ with
$G=\Spin(d,1)$,
$K=\Spin(d)$ and 
where $\Gamma$ is a discrete, torsion-free subgroup of $G$. The space
$\widetilde X$ can be 
identified with the $d$-dimensional hyperbolic space. Let
$P_0:=MAN$ be the standard parabolic subgroup 
of $G$ and let $\sigma$ be a finite dimensional unitary representation of $M$.
Then $\sigma$ naturally defines a locally homogeneous vector bundle
$V(\sigma)$ over $SX$ and the
geodesic
flow lifts to a flow on $V(\sigma)$. Thus for every closed geodesic $c$  
its lift to $V(\sigma)$ defines an endomorphism 
$\mu_\sigma(c)$ on the fibre of
$V(\sigma)$ over $\dot{c}(0)$. Let $d=2n+1$, then the Selberg zeta function
$Z(s,\sigma)$ is defined as 
\begin{align}\label{SZF}
Z(s,\sigma):=\prod_{c\in\mathcal{PC}(X)}\prod_{k=0}^\infty\det{
\left(\Id-\mu_\sigma(c)\otimes
S^kP^{\s}_ce^{-(s+n)\ell(c)}
\right)}.
\end{align}
Here $S^k$ denotes the $k$-th symmetric power of an endomorphism. The infinite
product in \eqref{SZF} converges absolutely only for
$\Real(s)>2n$. In this paper we prove that  $Z(s,\sigma)$ has 
a meromorphic continuation to $\C$ and describe its 
singularities. 

From now
on we assume that $\Gamma$ satisifies the following 
condition: For every $\Gamma$-cuspidal parabolic subgroup  
$P=M_PA_PN_P$ of $G$ one has 
\begin{align}\label{a1}
\Gamma\cap P=\Gamma\cap N_P. 
\end{align}
The central elements of our main result can be summarized as follows.
\begin{thrm}\label{Erstes Theorem}
Let $X$ be a complete odd dimensional hyperbolic manifold of finite volume and
assume that
its
fundamental group $\Gamma$ satisfies \eqref{a1}. Let $\sigma$ be a finite
dimensional
unitary representation of $M$. Then the
Selberg zeta function $Z(s,\sigma)$ has a
meromorphic continuation
to $\C$. All its possible singularities (zeroes and poles) and their
corresponding orders  can be described in terms of
the following data: 
\begin{itemize}
\item by the discrete spectrum of a graded differential operator
$A(\sigma)$ of Laplace type and of a twisted Dirac operator $D(\sigma)$
which act on a locally homogeneous vector bundle $E(\sigma)$ over $X$.
\item by the poles of the scattering matrix
$\mathbf{C}(\nu_\sigma:\sigma:s)$ associated to
$\sigma$ and a certain
representation $\nu_\sigma$ of $K$.
\end{itemize}
Additionally, the Selberg zeta function has singularities which are 
independent of $X$ and whose order depends only on $p$, the number of 
cusps of $X$.
\end{thrm}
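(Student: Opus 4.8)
The plan is to reduce the statement to the Selberg trace formula on the finite-volume manifold $X$, following the strategy that Bunke and Olbrich employed in the cocompact case, and to read off the singularities of $Z(s,\sigma)$ from the four groups of terms occurring in that formula. The first step is purely representation-theoretic: following Bunke--Olbrich one attaches to $\sigma$ an auxiliary representation $\nu_\sigma$ of $K$ and a graded locally homogeneous bundle $E(\sigma)$ over $X$ carrying the Laplace-type operator $A(\sigma)$ and the twisted Dirac operator $D(\sigma)$, arranged so that the generating identity
\begin{equation*}
\sum_{k\ge 0} S^k P^{\s}_c\, t^k=\det\bigl(\Id-t P^{\s}_c\bigr)^{-1},
\end{equation*}
combined with the supersymmetric cancellation relating the Euler characteristic of $E(\sigma)$-valued complexes to the symmetric powers $S^k P^{\s}_c$ and to the holonomies $\mu_\sigma(c)$, turns the closed-geodesic data into the orbital-integral side of a trace formula. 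Concretely, one produces a family of test functions $h^\sigma_s$ on $G$, holomorphic in $s$ and admissible for the trace formula when $\Real(s)>2n$, whose semisimple orbital integrals reproduce, after summation over $\mathcal{PC}(X)$ and over $k$, exactly $\frac{d}{ds}\log Z(s,\sigma)$ up to elementary factors.

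The second step is to apply the appropriate trace formula. Since $X$ is non-compact the relevant tool is the regularised (truncated, Arthur--Selberg type) trace formula for $\Gamma$ acting on sections of $E(\sigma)$; hypothesis \eqref{a1}, which forces $\Gamma\cap P=\Gamma\cap N_P$ at every cusp, guarantees that the cusp cross-sections and the weighted boundary terms have the simple form needed for this formula, and that the continuous spectrum is described by the Eisenstein series attached to $\nu_\sigma$ and $\sigma$, with scattering matrix $\mathbf{C}(\nu_\sigma:\sigma:s)$. The trace formula then expresses the regularised trace of $h^\sigma_s$ as a sum of: (i) an identity contribution; (ii) a hyperbolic contribution; (iii) a parabolic/cuspidal contribution; and (iv) a contribution of the continuous spectrum involving $\frac{d}{ds}\log\det\mathbf{C}(\nu_\sigma:\sigma:s)$ together with the value of the scattering matrix at the symmetry point.

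The third step is the term-by-term analysis. Because $d=2n+1$ is odd, the Plancherel density of $G=\Spin(d,1)$ is a polynomial, so the identity contribution (i) is an entire, non-vanishing exponential of a polynomial multiple of $\vol(X)$ and contributes no zeroes or poles. The spectral side produces a sum over the discrete eigenvalues of $A(\sigma)$ and of $D(\sigma)$; integrating the trace-formula identity in $s$, each such eigenvalue $\lambda$ forces a zero or pole of $Z(s,\sigma)$ at the point of $\C$ determined by $\lambda$ under the spectral shift, with order equal to the multiplicity --- this yields the first bullet of the theorem. The scattering term in (iv) is meromorphic in $s$ with singularities exactly at the poles of $\mathbf{C}(\nu_\sigma:\sigma:s)$, which transfer to singularities of $Z(s,\sigma)$ of the corresponding order --- the second bullet. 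Finally the parabolic contribution (iii) is a universal expression built from Gamma- and digamma-type factors attached to $N_P$, independent of the geometry of $X$ and carrying an overall factor equal to $p$; its poles give the last assertion of the theorem.

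The main obstacle is the passage to $\Real(s)\le 2n$: the hyperbolic sum, and hence the identification of the hyperbolic term with $\frac{d}{ds}\log Z(s,\sigma)$, is only established where the Euler product converges. I would circumvent this by first continuing the three remaining groups of terms --- each manifestly meromorphic in $s$ --- to all of $\C$, concluding that the hyperbolic term continues meromorphically as well, and then verifying that its continuation still has only first-order poles with integral residues, so that it is the logarithmic derivative of a single-valued meromorphic function, necessarily the continuation of $Z(s,\sigma)$. The subsidiary analytic difficulties --- making the truncation-dependent pieces of the regularised trace cancel, controlling the residual (small-eigenvalue) spectrum, and handling the possible non-self-adjointness of $A(\sigma)$ in the twisted case --- are precisely where hypothesis \eqref{a1} is used and form the technical heart of the argument; the algebraic cancellation identities and the identity contribution can be imported from the cocompact case treated by Bunke and Olbrich.
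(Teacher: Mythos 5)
Your overall strategy --- heat-kernel-type test functions attached to $A(\sigma)$ and $D(\sigma)$, the trace formula, and a term-by-term reading of the singularities --- is indeed the paper's strategy, but your plan has two genuine gaps at exactly the points where the non-compact, general-$\sigma$ case differs from what can be ``imported from Bunke--Olbrich.'' The first and most serious concerns your item (iii). You propose to use a truncated Arthur--Selberg type formula and simply assert that the parabolic contribution is ``a universal expression built from Gamma- and digamma-type factors, independent of the geometry of $X$.'' That assertion is the \emph{conclusion} of a hard theorem, not an input. The weighted orbital integrals $T_P'(\alpha)=\int_K\int_{N_P}\alpha(kn_Pk^{-1})\log\|\log n_P\|\,dn_P\,dk$ appearing on the geometric side are not invariant distributions, and for a general $\sigma\in\hat M$ the heat-kernel test function has $K$-types that cannot be prescribed in advance; this is precisely why Gangolli--Wallach and Gon--Park could only treat the fundamental representations. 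The paper circumvents this by passing to Hoffmann's \emph{invariant} trace formula, in which $T_P'$ is combined with part of the Knapp--Stein/scattering data to form an invariant distribution $\mathcal{I}$ whose Fourier transform is known explicitly for arbitrary $K$-finite Schwartz functions (Theorem \ref{Hoffmanns Theorem}); the factorization of the $C$-matrix (Proposition \ref{Faktorisierung C-Term}) is then needed to re-separate the genuine scattering singularities from the universal ones. Without this ingredient your step three cannot be executed, and nothing in the cocompact case supplies it since there is no parabolic term there.

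The second gap is that a single application of the trace formula to the even heat kernel of $A(\sigma)$ cannot isolate $Z(s,\sigma)$: the Fourier transform of that test function is supported on both $\pi_{\sigma,\lambda}$ and $\pi_{w_0\sigma,\lambda}$, so the hyperbolic term produces the logarithmic derivative of the \emph{symmetrized} function $Z(s,\sigma)Z(s,w_0\sigma)$. When $\sigma\neq w_0\sigma$ one must run the argument a second time with the odd kernel $\Tr\bigl(D(\sigma)e^{-tD(\sigma)^2}\bigr)$, whose Fourier transform is $(-1)^n\lambda e^{-t\lambda^2}$, to continue the antisymmetric quotient $Z(s,\sigma)/Z(s,w_0\sigma)$ and then take the geometric mean; this is where $D(\sigma)$ actually enters, and your proposal lists the operator but never uses it. Two smaller points: a test function ``holomorphic in $s$'' of resolvent type is not of trace class for $d>1$, which is why the paper works with the heat kernel and converts to resolvents via the generalized resolvent identity with $N>d/2$ factors; and hypothesis \eqref{a1} is used to ensure that every conjugacy class of $\Gamma$ is either semisimple or parabolic (so the geometric side splits cleanly), not to repair any failure of self-adjointness of $A(\sigma)$.
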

Theorem \ref{Erstes Theorem} provides a 
relation between the geometry of the possibly non-compact hyperbolic manifold
$X$
and the spectrum of certain differential operators. 
More precisely, the singularities in the first two items correspond
to spectral
parameters of $X$ since poles of the scattering matrix
$\mathbf{C}(\nu_\sigma:\sigma:s)$ are related to poles of the
resolvent of $A(\sigma)$. 

The additional singularities of $Z(s,\sigma)$ on the
negative
real line arise from the contribution of weighted orbital integrals to the
geometric
side of the trace formula. For a more precise version of Theorem \ref{Erstes
Theorem} we refer the reader
to Theorem \ref{AnconctZ} at the end of this article .\\

Our main result implies the existence of the analytic continuation of the
Ruelle 
zeta functions associated to representations of $M$ and $G$. These functions are
dynamical zeta functions, defined
as follows. Let $\sigma$ be a finite dimensional unitary representation of $M$.
Then put
\begin{align}\label{Ruelle 1}
R(s,\sigma):=\prod_{c\in\mathcal{PC}(X)}\det{\left(\Id-\mu_\sigma(c)e^{
-s\ell(
c)}\right)}.
\end{align}
Similarly, if $\tau$ is a finite dimensional representation  of $G$, 
the restriction of $\tau$ to $\Gamma$ defines 
a locally homogeneous vector bundle $V(\tau)$ over $SX$ and the geodesic 
flow lifts to a flow on $V(\tau)$. Thus every closed geodesic $c$ induces an 
endomorphism $\mu_\tau(c)$ on the fibre
of $V(\tau)$ over $\dot{c}(0)$. Now we define the Ruelle zeta function on $X$
associated to $\tau$ by
\begin{align}\label{Ruelle 2}
R(s,\tau):=\prod_{c\in\mathcal{PC}(X)}\det{\left(\Id-\mu_\tau(c)e^{
-s\ell(
c)}\right)}.
\end{align}
The infinite products in \eqref{Ruelle 1} and \eqref{Ruelle 2} converge 
absolutely and locally uniformly only
for $s\in\C$ with $\Real(s)$ sufficiently large. However
one can express each Ruelle zeta function as a
weighted product of Selberg zeta functions. Thus Theorem
\ref{Erstes Theorem} implies the following corollary.
\begin{kor}\label{Theoremzwei}
For every finite dimensional unitary representation $\sigma$ of $M$ and for
every 
finite dimensional irreducible representation $\tau$ of $G$ the 
Ruelle zeta functions $R(s,\sigma)$ and $R(s,\tau)$ admit a meromorphic
continuation to $\C$. 
\end{kor}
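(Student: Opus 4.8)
The plan is to deduce Corollary \ref{Theoremzwei} from Theorem \ref{Erstes Theorem} by expressing each Ruelle zeta function as a finite alternating product of Selberg zeta functions associated to the various irreducible summands of an exterior-power representation. First I would treat $R(s,\sigma)$. The key algebraic identity is that for an endomorphism $P$ of a vector space of dimension $2n$ (the fibre of $T^{\s}SX$) one has the determinant identity
\begin{align}\label{detident}
\det(\Id - P)^{-1} = \sum_{k=0}^\infty \det(\Id - \wedge^{?}\,\cdot\,)\dots
\end{align}
more precisely, the classical formula $\prod_{k\ge 0}\det(\Id - q\, S^k P) $ compared against $\det(\Id - q P)^{\pm 1}$; combining such identities over the graded pieces $\wedge^i(T^{\s}SX)^*$ for $i=0,\dots,2n$ lets one write $R(s,\sigma)$ as $\prod_{i} Z(s+i,\sigma\otimes\wedge^i)^{(-1)^i}$ up to an explicit shift in the spectral parameter and a twist of $\sigma$ by the $M$-representation on $\wedge^i$ of the appropriate space (these twisted representations decompose into finitely many irreducible unitary $\sigma'$, to which Theorem \ref{Erstes Theorem} applies directly). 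Since a finite product and quotient of meromorphic functions on $\C$ is again meromorphic on $\C$, the meromorphic continuation of $R(s,\sigma)$ follows.

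For $R(s,\tau)$ with $\tau$ an irreducible representation of $G$, I would first use the standard fact that the restriction $\tau|_{MA}$ decomposes, and that along a closed geodesic the monodromy contribution $\mu_\tau(c)$ is conjugate to the action of a one-parameter subgroup $a_{\ell(c)}\in A$ (times an $M$-part), so that $\det(\Id - \mu_\tau(c)e^{-s\ell(c)})$ factors according to the weight decomposition of $\tau$ under $A$. Concretely, writing $\tau|_{MA} = \bigoplus_j \sigma_j \otimes e^{\lambda_j}$ with $\sigma_j$ an $M$-representation and $\lambda_j\in\R$ the $A$-weight, one obtains $R(s,\tau) = \prod_j R(s+\lambda_j,\sigma_j)$, and each factor on the right is meromorphic on $\C$ by the first part. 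One must check that the $\sigma_j$ are (or decompose into) unitary $M$-representations; since $\tau$ is a finite-dimensional representation of the reductive group $G$ this holds after restriction to the compact group $M$, so there is no loss.

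The main obstacle is not conceptual but bookkeeping: one must pin down the precise shifts in the $s$-variable and the precise twisting representations so that the factorization of $R$ into Selberg zeta functions is exactly an identity of the products \eqref{SZF}, \eqref{Ruelle 1}, \eqref{Ruelle 2} in their common domain of absolute convergence $\Real(s)\gg 0$, where all manipulations are legitimate; once the identity holds on that half-space it propagates to all of $\C$ by uniqueness of meromorphic continuation. A secondary point to verify is that the symmetric-power structure in $Z(s,\sigma)$ and the exterior-power structure entering the determinant identity \eqref{detident} are compatible, i.e. that $P_c^{\s}$ acting on $\wedge^i(T^{\s}SX)^*$ again enters the Selberg zeta function of a genuine locally homogeneous bundle $V(\sigma')$ for an $M$-representation $\sigma'$; this is where one invokes the homogeneous-bundle description from section \ref{secselz}. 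I expect the $G$-case to reduce cleanly once the $M$-case is in hand, so the bulk of the effort is the exterior-algebra identity and the identification of twisted bundles in the $M$-case.
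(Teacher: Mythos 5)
Your proposal is correct and follows essentially the same route as the paper, which deduces the corollary by expressing $R(s,\sigma)$ and $R(s,\tau)$ as weighted products of Selberg zeta functions (the alternating product over $\Lambda^i\bar{\mathfrak{n}}$ with shifted spectral parameter for $R(s,\sigma)$, and the $MA$-weight decomposition of $\tau$ reducing $R(s,\tau)$ to the previous case), citing the cocompact-case arguments of Bunke--Olbrich and Wotzke, and then invoking Theorem \ref{Erstes Theorem}. The only caveat is bookkeeping you already flag yourself: the correct shift is $s+k-n$ (coming from $\Ad(a_\gamma)$ acting by $e^{-\ell(\gamma)}$ on $\bar{\mathfrak{n}}$ and the $e^{-(s+n)\ell(c)}$ normalization in \eqref{SZF}), and the identity to use is $\det(\Id-A)=\sum_i(-1)^i\Tr\Lambda^iA$ inserted into \eqref{LogSZF} rather than the symmetric-power product identity as such.
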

We shall now describe an application  of our methods and our main results to 
the  3-dimensional case. In this case there is a natural isomorphism
$G\cong\Sl_2(\C)$. For $m\in\frac{1}{2}\N$ let $\tau(m)$ be 
the $2m$-th symmetric power of the standard representation of $\Sl_2(\C)$. 
Then in \cite{MP} we introduced the analytic torsion $T_X(\tau(m))$ of $X$ 
with coefficients in the locally homogeneous vector bundle defined by 
the restriction of $\tau(m)$ to $\Gamma$. This definition extends 
the definition of the analytic torsion on closed manifolds with coefficients 
in a local system to a specific non-compact situation. Now, recalling
the Cheeger-M\"uller theorems, \cite{Cheeger}, \cite{Muellereins},
\cite{Muellerdrei}, it is natural to ask whether the 
analytic torsion $T_X(\tau(m))$ on the non-compact manifold $X$ has a
combinatorial 
analogue.

In a subsequent paper, we will 
give a partial answer to this question by combining the methods 
and results of this paper with the recent work \cite{MePo} of Menal-Ferrer and
Porti on Reidemeister torsion and Ruelle zeta functions. To 
state our result, 
let $\overline{X}$ be the
Borel-Serre 
compactification of $X$. Then $\overline{X}$ is a compact smooth manifold with
boundary which 
is homotopy-equivalent to $X$ and $X$ is diffeomorphic to the interior of
$\overline{X}$. 
Let $\overline{E}_{\tau(m)}$ denote the flat vector-bundle over $\overline X$
associated 
to the restriction of $\tau(m)$ to $\Gamma$, the fundamental group of
$\overline X$. 
Then Menal-Ferrer and Porti introduced a canoncial family $\{\theta_i\}$ of
bases of the singular
homology groups
of 
$\overline{X}$ with coefficients in $\overline{E}_{\tau(m)}$ \cite[Proposition
2.10]{MePo}. Moreover, they showed that for 
each $m\in\mathbb{N}$, $m\geq 3$, the quotient of the corresponding Reidemeister
torsions $\frac{
\left|\tau_X(\tau(m);\{\theta_i\})\right|}{\left|\tau_X(\tau(2);\{\theta_i\}
)\right|}$ is independent 
of the choice of the family $\{\theta_i\}$ \cite[Proposition 2.2]{MePo}. It 
is also independent of a given spin structure. We 
shall denote this quotient simply by $\frac{
\left|\tau_X(\tau(m)\right|}{\left|\tau_X(\tau(2)
)\right|}$ here. We remark that our parametrization of the representations
$\tau(m)$ is 
different from that used in \cite{MePo} but consistent with \cite{MP1},
\cite{MP}. We will prove the following theorem.
\begin{thrm}\label{Thrmdrei}
Let $X$ be a complete hyperbolic 3-manifold of finite volume and assume that 
its fundamental group $\Gamma$ satisfies \eqref{a1}.
Then for each $m\in\mathbb{N}$, there exists an explicit constant $c(m)$, which 
is independent 
of $X$, such that for $m\geq 3 $ one has
\begin{align*}
\frac{T_X(\tau(m))}{T_X(\tau(2))}=\left(\frac{c(m)}{c(2)}\right)^p\frac{
\left|\tau_X(\tau(m))\right|}{\left|\tau_X(\tau(2))\right|},
\end{align*}
where $p$ denotes the number of cusps of $X$. For
$m\in\frac{1}{2}\mathbb{N}$, 
a similar formula holds. 
\end{thrm}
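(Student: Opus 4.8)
The plan is to deduce Theorem \ref{Thrmdrei} by combining three ingredients: the meromorphic continuation and precise singularity structure of $Z(s,\sigma)$ from Theorem \ref{Erstes Theorem} (in the form of the more refined Theorem \ref{AnconctZ}), the known relationship between analytic torsion $T_X(\tau(m))$ and the values (or leading Laurent coefficients) of Ruelle/Selberg zeta functions at $s=0$, and the work of Menal-Ferrer and Porti \cite{MePo} identifying $|\tau_X(\tau(m))|$ with the corresponding Ruelle zeta value. First I would recall, as in \cite{MP} and \cite{MP1}, that on a $3$-manifold the analytic torsion $T_X(\tau(m))$ is expressed through the graded regularized determinant of the Laplacians on $E_{\tau(m)}$-valued forms, and that via the Selberg/Mayer--Fried type identity this graded determinant equals, up to an explicit elementary factor, the leading coefficient of $Z(s,\sigma)$ in an appropriate Laurent expansion at $s=0$ for the relevant representations $\sigma$ of $M=\Spin(2)$ entering the decomposition of $\tau(m)|_K$. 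In the compact case this is the Fried-type theorem; here the point is that Theorem \ref{AnconctZ} supplies exactly the missing analytic continuation and tells us which extra, $X$-independent singularities appear, their orders depending only on the number of cusps $p$.

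The key steps, in order, are: (1) write the Ruelle zeta function $R(s,\tau(m))$ as a finite weighted product $\prod_i Z(s,\sigma_i)^{\pm 1}$ over the $M$-types $\sigma_i$ occurring in $\tau(m)|_M$, using the standard decomposition recalled around \eqref{Ruelle 2} and Corollary \ref{Theoremzwei}; (2) combine this with the analytic-torsion formula of \cite{MP}, together with the Cheeger--M\"uller type identification $|\tau_X(\tau(m);\{\theta_i\})|$ with $|R(0,\tau(m))|$ (or its leading Laurent coefficient) proved in \cite{MePo}, to express $\log T_X(\tau(m))$ as the leading Laurent coefficient of $\log R(s,\tau(m))$ at $s=0$ plus a correction term coming from the geometric/cuspidal contributions; (3) invoke Theorem \ref{AnconctZ} to pin down that correction term: the $X$-dependent singularities (from the discrete spectrum of $A(\sigma)$, $D(\sigma)$ and from the poles of the scattering matrix $\mathbf C(\nu_\sigma:\sigma:s)$) cancel between numerator and denominator precisely because the homology bases $\{\theta_i\}$ were chosen by \cite[Proposition 2.10]{MePo} to match the $L^2$-cohomology data, leaving only the $X$-independent singularity of order depending on $p$; (4) take the quotient $T_X(\tau(m))/T_X(\tau(2))$, so that all genuinely spectral contributions drop out and only the universal factor, raised to the power $p$, survives, which defines $c(m)$ and yields the stated formula. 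The case $m\in\frac12\N$ is handled identically once one accounts for the spin structure dependence, which by the cited results of \cite{MePo} affects only an overall sign.

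The main obstacle I expect is step (3): controlling the boundary/cusp contributions so that the ``transcendental'' parts match on both sides. Concretely, one must show that the regularized analytic torsion, which in the non-compact setting involves a renormalization of a divergent trace-formula integral (the weighted orbital integrals alluded to after Theorem \ref{Erstes Theorem}), produces exactly the same $X$-independent constant $c(m)^p$ that the Reidemeister side produces through the Menal-Ferrer--Porti normalization of the bases $\{\theta_i\}$. This requires identifying the order and leading coefficient of the additional singularity of $Z(s,\sigma)$ at the relevant point with the explicit local contribution of a single cusp, and checking that the scattering-matrix determinant $\det \mathbf C(\nu_\sigma:\sigma:s)$ contributes trivially to the \emph{quotient} because it is the same, up to the elementary factor, for $\tau(m)$ and $\tau(2)$ after the normalization. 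Once the bookkeeping of these cusp contributions is done---using the explicit description of the singularities in Theorem \ref{AnconctZ} and the Borel--Serre picture---the identity follows, with $c(m)$ read off as the ratio of the universal local factors; the remaining verifications (unitarity of $\sigma_i$, absolute convergence for $\Real(s)$ large, and the finiteness of the product expressing $R$ in terms of $Z$) are routine.
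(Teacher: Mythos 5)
The paper does not actually prove Theorem \ref{Thrmdrei} in this article: immediately after the statement the author says the proof ``will be given in another publication'' and only records the intended strategy --- relate $T_X(\tau(m))$ to the behaviour of the Ruelle zeta function $R(s,\tau(m))$, write $R(s,\tau(m))$ as a weighted product of twisted Selberg zeta functions, and then deduce a \emph{functional equation} and a \emph{determinant formula} for those Selberg zeta functions. Your proposal follows this announced route in outline, so there is no divergence of approach to report; but as a proof it has concrete gaps, and they sit exactly where the paper itself locates the remaining work.

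First, Theorem \ref{AnconctZ} determines $Z(s,\sigma)$ only through the location and orders of its singularities, i.e.\ up to multiplication by the exponential of an entire function (compare the undetermined polynomials $C(\lambda_k;s)$, $C(\sigma;s)$ in Proposition \ref{PropSymS}). To pass from the analytic torsion --- a product of graded regularized determinants of the operators $A(\sigma)$ --- to leading Laurent coefficients of zeta functions one needs the determinant formula, and to control the values at the relevant points one needs the functional equation; you neither derive these nor flag them as missing inputs, whereas the paper explicitly names them as the ingredients to be supplied in the sequel. Second, your step (3) is an assertion, not an argument: the claim that the $X$-dependent spectral and scattering contributions ``cancel because the bases $\{\theta_i\}$ match the $L^2$-cohomology data'' does not engage with the actual mechanism. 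What must be done is to evaluate the product of Selberg zeta functions at the shifted points dictated by the $M$-types of $\tau(m)$, match the Menal-Ferrer--Porti quotient $|\tau_X(\tau(m))|/|\tau_X(\tau(2))|$ with the corresponding Ruelle zeta values via \cite{MePo}, and then explicitly compute the residual cusp contributions --- the $\Gamma$-factor-- and $Q(\sigma,\cdot)$--type terms of orders $p\dim(\sigma)$ and $p\,c_{j,l}(\sigma)$ in Theorem \ref{AnconctZ}, together with the weighted-orbital-integral terms entering the regularization of $T_X$. That bookkeeping is precisely what produces the constant $c(m)$, and it is absent from your sketch. So the proposal is a plausible plan coinciding with the plan the paper defers to another publication, but it is not a proof.
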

If one applies the results of \cite{MP} and the explicit form of the constants
$c(m)$, it follows that the quotients of torsions on both side of the equation
are by no means identically one and are 
even exponentially growing as $m\to\infty$. 

The proof of Theorem \ref{Thrmdrei} 
will be given in another publication. It is based on relating the analytic 
torsion $T_X(\tau(m))$ to the behaviour of the Ruelle zeta function 
$R(s,\tau(m))$ at $0$. To prove this relation, we will express the Ruelle 
zeta function $R(s,\tau(m))$ by twisted Selberg zeta functions and then 
we will apply the methods and results of this paper. In particular, we will 
deduce a functional equation and a determinant formula for the Selberg 
zeta functions. 

One can apply Theorem \ref{Thrmdrei} to study for fixed $m\in\mathbb{N}$ the
growth of the
torsion in the 
cohomology $H^*(\Gamma_i,M_{\tau(m)})$ for special
sequences
 of arithmetic groups $\Gamma_i$, $i\in\mathbb{N}$. Here $M_{\tau(m)}$ denotes a
lattice in the 
representation space $V_{\tau(m)}$ of $\tau(m)$ 
which is stable under the  $\Gamma_i$. This gives a modified extension of
some results of Bergeron and Venkatesh \cite{BV} to the case of non-compact
hyperbolic 3-manifolds of finite volume. \\

Let us now recall some of the previous results related to Theorem
\ref{Erstes Theorem}.
If $X$ is compact, it was shown in \cite{Fried2}
that $Z(s,\sigma)$ has a
meromorphic continuation to
$\C$. In a next step, combining the trace formula with spectral-theoretic
methods, for compact X Bunke and Olbrich described the
singularities of $Z(s,\sigma)$ as in the first item of Theorem
\ref{Erstes Theorem} \cite{Bunke}. If  $X$ is only of finite volume and
satisfies assumption \eqref{a1}, 
the meromorphic continuation of the Selberg zeta function and a description 
of its singularities has previously been obtained only for a finite number 
of special representations of $M$. Namely Gangolli
and Wallach \cite{GaWa} solved this problem for the trivial representation of
$M$ and
Gon and Park \cite{Gon} generalized their methods to treat the fundamental
representations $\sigma_{k}$, $k=1,\dots,n$ of $M$ on
$\Lambda^{k}(\mathbb{C}^{2n})$.
However, it is not clear whether the methods of Gangolli, Wallach, Gon and Park
can be
applied to
general $\sigma\in\hat{M}$ since they use a special type of a Paley-Wiener
theorem for differential forms which prescribes the $K$-types of a test function
in a very specific way. This theorem exists only for the fundamental
representation 
$\sigma_k$, $k=1,\dots,n$. However, for an application as our proof of Theorem
\ref{Thrmdrei},
it is 
necessary to understand the Selberg zeta function for every representation
$\sigma$ of 
$M$. \\

To prove our main result, we will
generalize the approach of Bunke and 
Olbrich \cite{Bunke} from the compact case to our situation. In 
the approach of Bunke and Olbrich to the Selberg zeta functions, the generalized
Laplace operators appear quite naturally. To carry over their methods to the
non-compact 
situation we use the invariant trace formula as it is stated and proved by
Hoffmann
\cite{Hoffmann2}. We would like to emphasize that the invariant trace formula 
can be applied only because the Fourier transform of the invariant part
associated to 
the weighted orbital integral has been computed explicitly by Hoffmann
\cite{Hoffmann}. Our approach to the Selberg zeta function is different from
that used
by Gon and Park, since the trace formula 
they use is not invariant and so they have to prescribe the $K$-types 
of their test functions.  \\
 
We shall now describe our proof more precisely. Let us first recall the setting
of the trace formula. The right regular representation
$\pi_\Gamma$ of $G$ on
$L^2(\Gamma\backslash G)$ splits as  
$\pi_{\Gamma}=\pi_{\Gamma,d}\oplus\pi_{\Gamma,c}$. 
Here the representation $\pi_{\Gamma,d}$ is completely reducible. 
On the
other hand, the representation $\pi_{\Gamma,c}$ is isomorphic to a direct
integral
over all tempered principle series representations of $G$. 
For a 
K-finite Schwarz function $\phi$ on $G$, the operator $\pi_{\Gamma,d}(\phi)$ is
trace 
class and the invariant trace formula as it is stated in \cite{Hoffmann2} 
expresses 
$\Tr\left(\pi_{\Gamma,d}(\phi)\right)$
as a sum of invariant distributions on $G$ applied to $\phi$. 

In order to
prove 
the meromorphic continuation of the Selberg zeta function $Z(s,\sigma)$, we
first study
the symmetrized Selberg zeta function $S(s,\sigma)$, which is given by
$Z(s,\sigma)$ if $\sigma=w_0\sigma$ and by $Z(s,\sigma)Z(s,w_0\sigma)$, 
if $\sigma\neq w_0\sigma$. Here $w_0$ is a fixed representative
of the restricted Weyl group. 
As in the compact case \cite{Bunke}, there is a 
$K$-finite function $h_t^\sigma$, belonging to all Harish-Chandra Schwarz
spaces,
such that the logarithmic derivative of $S(s,\sigma)$ is equal to a certain
integral
transform of $H(h_t^\sigma)$. Here $H$ is a distribution on $G$ which occurs in
the
invariant Selberg
trace formula. It is built from the semisimple
conjugacy classes of $\Gamma$. 
Geometrically, the function $h_t^\sigma$ arises from the graded fibre trace of
the kernel of
$e^{-t\tilde{A}(\sigma)}$, where $\tilde{A}(\sigma)$ is a Laplace-type operator
which 
acts on a graded vector bundle $\tilde{E}(\sigma)$ 
over $\widetilde X$. Now we apply the invariant trace formula to $h_t^\sigma$. 
We study the integral transform of all involved
summands carefully and in this way we obtain an expression for the logarithmic
derivative of $S(s,\sigma)$. 

If
$\sigma$ is not
invariant under the Weyl group, we introduce the antisymmetric Selberg zeta
function
$S_a(s,\sigma):=Z(s,\sigma)/Z(s,w_0\sigma)$. The bundle $\tilde{E}(\sigma)$
turns
out to be a
spinor bundle and there is a canonical twisted Dirac operator
$\tilde{D}(\sigma)$ on $\tilde{E}(\sigma)$ such that
$\tilde{D}(\sigma)^2=\tilde{A}(\sigma)$.
Now the fibre trace of the kernel of
$\Tr(\tilde{D}(\sigma)e^{-t\tilde{D}(\sigma)^2})$ is 
represented by a $K$-finite Harish-Chandra Schwarz function $k_t^\sigma$ and the
logarithmic derivative of 
$S_a(s,\sigma)$ equals an integral
transform of $H(k_t^\sigma)$, where the distribution $H$ is as above. Using the
invariant 
trace formula again we can study the logarithmic derivative of $S_a(s,\sigma)$.
Putting everything together,
we can complete the proof of Theorem \ref{Erstes Theorem}.
\\

This paper is organized as follows. In section \ref{secpr} we fix some notation
and recall 
some basic facts concerning the group $G$.  In section \ref{secselz} we
describe 
the Selberg zeta functions in terms which are needed for our approach via the
trace formula. In section \ref{secC} we review Hoffman's factorization 
of the $C$-matrix associated to the Eisenstein series which is an important 
tool to state the invariant trace formula. The latter formula is treated in
section \ref{secinv}. 
In the same section we study the Fourier transform of a certain
distribution
$\mathcal{I}$ 
occuring in the trace formula. In section \ref{secBLO} we introduce 
certain Laplace operators which act on graded locally homogeneous vector bundles
over 
$X$ and compute the Fourier transform of the corresponding heat kernels.
The meromorphic 
continuation of the symmetric Selberg zeta function and a description of 
its singularities is treated in section \ref{secsymzeta}. Section 
\ref{sectwDO} is devoted to the study of certain twisted Dirac operators on
locally homogeneous bundles over $X$. In the final 
section \ref{secasyms} we study the antisymmetric Selberg zeta function and
complete the proof of Theorem \ref{Erstes Theorem}.

\bigskip
{\bf Acknowledgement.}
This paper contains parts of the author's PhD thesis. He would like to thank his 
supervisor Prof. Werner M\"uller for his constant support and for helpful
suggestions.

\section{Preliminaries}\label{secpr}
\setcounter{equation}{0}
In this section we establish some notation and recall some basic facts 
about representations of the involved Lie groups.
\addtocontents{toc}{\protect\setcounter{tocdepth}{1}}
\subsection{}
For $d\in\N$, $d=2n+1$ we let $G:=\Spin(d,1)$. 
The group $G$ is defined as the universal covering group 
of $\SO_{0}(d,1)$, where $\SO_{0}(d,1)$ is the identity component of $\SO(d,1)$.
Let
$K:=\Spin(d)$. Then $K$ is a maximal compact subgroup of $G$. Put
$\widetilde X:=G/K$. Let $G=NAK$
be the standard Iwasawa decomposition of $G$ and let $M$ be the 
centralizer of $A$ in $K$. Then we have $M=\Spin(d-1)$. 
The Lie algebras of  $G,K,A,M$ and $N$ will be denoted by
$\gL,\kL,\aL,\mL$ and $\nf$, respectively. Define the 
standard Cartan involution $\theta\colon\gL\rightarrow \gL$ by
$\theta(Y):=-Y^{t},\quad Y\in\gL$.
Let 
$\mathfrak{g}:=\mathfrak{k}\oplus\mathfrak{p}$
be the Cartan decomposition of $\gL$ with respect to $\theta$. Let $B$ be 
the Killing form of $\gL$. Then $B$ 
is positive on $\pL$. We define a symmetric bilinear form
$\langle\cdot,\cdot\rangle$ on $\gL$ by $\langle
Y_1,Y_2\rangle:=\frac{1}{2(d-1)}B(Y_1,Y_2)$, $Y_1,Y_2\in\gL$.
Let $x_0=eK\in
\widetilde X$. Then we have a canonical isomorphism
$T_{x_0}\widetilde X\cong \pL$
and thus the restriction of $\langle\cdot,\cdot\rangle$ to $\pL$ 
defines an inner product on $T_{x_0}\widetilde X$ and therefore an invariant 
metric on $\widetilde X$. This metric has constant curvature $-1$ and
$\widetilde X$,
equipped with this metric, is isometric to the hyperbolic space
$\mathbb{H}^{d}$. 

\subsection{}\label{subsH}
Let $\aL$ be the Lie-Algebra of $A$. Then $\dim(\aL)=1$.
Fix a Cartan-subalgebra $\mathfrak{b}$ of $\mathfrak{m}$. 
Then $\mathfrak{b}$ is also a Cartan subalgebra of $\kL$ and
$\mathfrak{h}:=\mathfrak{a}\oplus\mathfrak{b}$
is a Cartan-subalgebra of $\mathfrak{g}$. We can identify
$\mathfrak{g}_\C\cong\mathfrak{so}(d+1,\C)$. Let $e_1\in\aL^*$ be the
positive restricted root defining $\mathfrak{n}$ and let 
$H_1\in\aL$ such that $e_1(H_1)=1$.
We fix $e_2,\dots,e_{n+1}\in
i\mathfrak{b}^*$ such that 
the positive roots $\Delta^+(\mathfrak{g}_\C,\mathfrak{h}_\C)$ are chosen as in
\cite[page 102]{Goodman}
for the root system $D$. We let
$\Delta^+(\mathfrak{g}_\C,\mathfrak{a}_\C)$ be
the set of roots of $\Delta^+(\mathfrak{g}_\C,\mathfrak{h}_\C)$ which do not
vanish on $\aL_\C$. The positive roots
$\Delta^+(\mathfrak{m}_\C,\mathfrak{b}_\C)$
are chosen such that they are restrictions of elements from
$\Delta^+(\mathfrak{g}_\C,\mathfrak{h}_\C)$.
For $\alpha\in\Delta^{+}(\mathfrak{g}_{\C},\mathfrak{h}_{\C})$ 
there exists a unique $H'_{\alpha}\in\mathfrak{h}_{\C}$ 
such that $B(H,H_{\alpha}^{'})=\alpha(H)$ for all $H\in\mathfrak{h}_{\C}$. 
One has $\alpha(H_{\alpha}^{'})\neq 0$. We let
$H_{\alpha}:=\frac{2}{\alpha(H_{\alpha}^{'})}H_{\alpha}^{'}$.
For $j=1,\dots,n+1$ we let $\rho_{j}:=n+1-j$.
Then the half-sum of positive roots $\rho_G$ and $\rho_M$ of
$\Delta^{+}(\mathfrak{g}_{\mathbb{C}},
\mathfrak{h}_\mathbb{C})$ resp. $\Delta^+(\mathfrak{m}_{\mathbb{C}},
\mathfrak{b}_{\mathbb{C}})$ 
are given by $\rho_{G}=\sum_{j=1}^{n+1}\rho_{j}e_{j}$ resp
$\rho_{M}=\sum_{j=2}^{n+1}\rho_{j}e_{j}$.

\subsection{}\label{subsrepmk}

Let ${{\mathbb{Z}}\left[\frac{1}{2}\right]}^{j}$ be the set of all 
$(k_{1},\dots,k_{j})\in\mathbb{Q}^{j}$ such that  all $k_{i}$ are 
integers or all $k_{i}$ are half integers. Then the finite dimensional
representations $\nu\in\hat{K}$ of $K$ are
parametrized by their highest weights $\Lambda\nu\in i\bL^*$, 
$\Lambda(\nu)=k_{2}(\nu)e_{2}+\dots+k_{n+1}(\nu)e_{n+1}$,
$(k_2(\nu),\dots,k_{n+1}(\nu))\in{{\mathbb{Z}}\left[\frac{1}{2}\right]}^{n}$, 
$k_2(\nu)\geq k_3(\nu)\geq\ldots\geq k_{n+1}(\nu)\geq 0$.
The  finite dimensional irreducible representations 
$\sigma\in\hat{M}$ of $M$ 
are parametrized by their highest weights $\Lambda(\sigma)\in i\bL^*$, 
\begin{equation}\label{RepM}
\Lambda(\sigma)=k_{2}(\sigma)e_{2}+\dots+k_{n+1}(\sigma)e_{n+1};\:\:
k_{2}(\sigma)\geq 
k_{3}(\sigma)\geq\dots\geq k_{n}(\sigma)\geq \left|k_{n+1}(\sigma)\right|,
\end{equation}
where $(k_{2}(\sigma),\dots,k_{n+1}(\sigma))
\in{{\mathbb{Z}}\left[\frac{1}{2}\right]}^{n}$.
Let $M'$ be the normalizer of $A$ in $K$ and let $W(A)=M'/M$ be the 
restricted Weyl-group. It has order two. Let $w_{0}\in W(A)$ be the non-trivial 
element. Then for $\sigma\in\hat{M}$ there is an associated representation
$w_0\sigma$, see \cite[section 2.4]{MP}. If the highest weight of $\sigma$ is as
in
\eqref{RepM}, then the highest weight $\Lambda(w_0\sigma)$ of $w_0\sigma$ is
given 
by
$\Lambda(w_0\sigma)=k_2(\sigma)e_2+\dots+k_n(\sigma)e_n-k_{n+1}(\sigma)e_{n+1}$.
For $\nu\in\hat{K}$ and $\sigma\in\hat{M}$ we denote 
by $\left[\nu:\sigma\right]$ the multiplicity of $\sigma$ in the restriction 
of $\nu$ to $M$.

\subsection{}\label{Subsecbr}

Let $\kappa$ be the spin-representation of $K$ over the spinor space
$\Delta^{2n}$. Then $\kappa$ is the
representation with highest weight
$\Lambda(\kappa)=\frac{1}{2}e_{2}+\dots+\frac{1}{2}e_{n+1}.$ 
By \cite[Theorem 8.1.4]{Goodman}
there is
an
$M$-invariant splitting 
$\Delta^{2n}=\Delta^{2n}_{+}\oplus\Delta^{2n}_{-}$
such that the restriction of $\kappa$ to $M$ acts on $\Delta^{2n}_{+}$ as
$\kappa^{+}$ and on $\Delta^{2n}_{-}$ as $\kappa^{-}$, where $\kappa^{\pm}$ are
the
representation of $M$ with highest weights
$\frac{1}{2}e_{2}+\dots+\frac{1}{2}e_{n}\pm\frac{1}{2}e_{n+1}$. 
Let $R(K)$ and $R(M)$ be the representation rings of $K$ and $M$. Let 
$\iota\colon M\longrightarrow K$ be the inclusion and let $\iota^{*}\colon R(K)
\longrightarrow R(M)$ be the induced map. If $R(M)^{W(A)}$ is the subring of 
$W(A)$-invariant elements of $R(M)$, then clearly $\iota^{*}$ maps $R(K)$ into 
$R(M)^{W(A)}$. Moreover, the following proposition holds.
\begin{prop}\label{branching}
The map $\iota$ is an isomorphism from $R(K)$ onto $R(M)^{W(A)}$.
Let $\sigma\in\hat{M}$ be of highest weight $\Lambda(\sigma)$ as in
\eqref{RepM} and assume that $k_{n+1}(\sigma)> 0$. Let
$\nu(\sigma)\in\hat{K}$
be the representation of highest weight $
\Lambda\left(\nu(\sigma)\right):=\sum_{j=2}^{n+1}
\left(k_j(\sigma)-1/2\right)e_j$. Then one has
$\sigma-w_0\sigma=\left(\kappa^+-\kappa^-\right)\otimes\iota^*\nu(\sigma)$.
Moreover, $\nu(\sigma)\otimes\kappa$ splits as
$\nu(\sigma)\otimes \kappa=\nu^+(\sigma)\oplus\nu^-(\sigma)$
such that $\sigma+w_0\sigma=\iota^*\nu^+(\sigma)-\iota^*\nu^-(\sigma)$.
\end{prop}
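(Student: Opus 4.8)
The plan is to reduce everything to classical branching rules for the pair $(\mathfrak{so}(2n+1,\C),\mathfrak{so}(2n,\C))$, i.e.\ the restriction $K=\Spin(2n+1)\supset M=\Spin(2n)$. First I would recall that the irreducible representations of $K$ are indexed by dominant weights $(k_2,\dots,k_{n+1})$ with $k_2\geq\cdots\geq k_{n+1}\geq 0$ (all integral or all half-integral), while those of $M$ are indexed by $(k_2,\dots,k_{n+1})$ with $k_2\geq\cdots\geq k_n\geq|k_{n+1}|$; the Weyl group $W(A)=M'/M$ of order two acts on $\hat M$ by flipping the sign of the last coordinate $k_{n+1}\mapsto -k_{n+1}$, which is exactly the description of $\Lambda(w_0\sigma)$ given in \S\ref{subsrepmk}. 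The branching rule from $\Spin(2n+1)$ to $\Spin(2n)$ states that $\iota^*\nu$ is multiplicity-free, and a $\sigma$ with highest weight $(l_2,\dots,l_{n+1})$, $l_{n+1}\geq 0$, occurs in $\nu$ with highest weight $(k_2,\dots,k_{n+1})$ if and only if $k_2\geq l_2\geq k_3\geq l_3\geq\cdots\geq k_{n+1}\geq l_{n+1}\geq 0$ (the interlacing condition), and then both $\sigma$ and $w_0\sigma$ occur exactly once (they coincide when $l_{n+1}=0$). This immediately shows $\iota^*(R(K))\subseteq R(M)^{W(A)}$, and a dimension/triangularity argument on the two natural $\Z$-bases (irreducibles of $K$ versus $W(A)$-orbit sums of irreducibles of $M$, both ordered by highest weight) shows $\iota^*$ is a ring isomorphism onto $R(M)^{W(A)}$ — this is the first assertion.

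For the second assertion, fix $\sigma$ with $k_{n+1}(\sigma)>0$ and set $\nu(\sigma)$ with $\Lambda(\nu(\sigma))=\sum_{j}(k_j(\sigma)-\tfrac12)e_j$. The key computation is $\iota^*\bigl((\kappa^+-\kappa^-)\otimes\iota^*\nu(\sigma)\bigr)=(\iota^*\kappa^+-\iota^*\kappa^-)\otimes\iota^*\nu(\sigma)$ in $R(M)$. Here I would use that $\kappa^+-\kappa^-$ is the (virtual) spin representation whose character is the ``Weyl denominator of $\mathfrak{so}(2n)$ over that of $\mathfrak{so}(2n+1)$'' type object; concretely, in terms of formal exponentials $x_j=e^{e_j}$, one has $\mathrm{ch}(\kappa^+)-\mathrm{ch}(\kappa^-)=\prod_{j=2}^{n+1}(x_j^{1/2}-x_j^{-1/2})$. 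Multiplying this by $\mathrm{ch}(\iota^*\nu(\sigma))$ and using the Weyl character formula for $M$, the product telescopes to $\mathrm{ch}(\sigma)-\mathrm{ch}(w_0\sigma)$: the shift by $-\tfrac12$ in each coordinate of $\nu(\sigma)$ is exactly designed so that, after multiplying by $\prod(x_j^{1/2}-x_j^{-1/2})$, the alternating sum over the Weyl group of $M$ collapses to the two extreme terms, one with last coordinate $+k_{n+1}(\sigma)$ and one with $-k_{n+1}(\sigma)$. I would verify this either by the Weyl character formula directly (writing numerators as alternating sums of $\prod x_j^{a_j}$ and matching), or by the cleaner route of recognizing $\kappa^\pm$ as the spinor modules and invoking the standard identity $\Delta\otimes V_\lambda \cong \bigoplus V_{\lambda+\varepsilon}$ (sum over sign choices $\varepsilon=(\pm\tfrac12,\dots,\pm\tfrac12)$, with the Clifford-algebra cancellation); restricting to $M$ and pairing the two ``top'' sign patterns gives the claimed difference.

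For the splitting $\nu(\sigma)\otimes\kappa=\nu^+(\sigma)\oplus\nu^-(\sigma)$, I would note $\nu(\sigma)\otimes\kappa$ is a genuine (not virtual) representation of $K$, so by the first part it corresponds to a well-defined element of $R(M)^{W(A)}$, and $\iota^*(\nu(\sigma)\otimes\kappa)=\iota^*\nu(\sigma)\cdot(\iota^*\kappa^++\iota^*\kappa^-)$; the same Weyl-character computation as above, now with the \emph{symmetric} product $\prod(x_j^{1/2}+x_j^{-1/2})$ replaced appropriately, yields $\iota^*\nu(\sigma)\cdot\iota^*\kappa = (\sigma+w_0\sigma) + (\text{terms coming in } W(A)\text{-orbit pairs})$, so that the decomposition of $\nu(\sigma)\otimes\kappa$ into $K$-irreducibles sorts into two groups whose $\iota^*$-images differ by exactly $\sigma+w_0\sigma$; one defines $\nu^+(\sigma),\nu^-(\sigma)$ accordingly. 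The main obstacle is purely bookkeeping: organizing the Weyl-character-formula telescoping cleanly so that one sees the cancellations without drowning in sign patterns. The cleanest presentation is probably to phrase both identities in the ring of exponentials $\Z[x_2^{\pm1/2},\dots,x_{n+1}^{\pm1/2}]$, compute $\mathrm{ch}(\kappa^+)\mp\mathrm{ch}(\kappa^-)=\prod_{j=2}^{n+1}(x_j^{1/2}\mp x_j^{-1/2})$ once and for all, and then invoke the Weyl character formula for $M$; the interlacing branching rule is then a corollary rather than an input.
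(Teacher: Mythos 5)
The paper does not actually prove this proposition; it cites Bunke--Olbrich \cite{Bunke}, Proposition 1.1, so your proposal has to be judged on its own merits. The first two assertions are handled correctly: the interlacing branching rule for $\Spin(2n+1)\supset\Spin(2n)$ plus a triangularity argument on the bases $\{\iota^*\nu\}$ versus the $W(A)$-orbit sums does give the ring isomorphism, and the identity $\mathrm{ch}(\kappa^+)-\mathrm{ch}(\kappa^-)=\prod_{j=2}^{n+1}(x_j^{1/2}-x_j^{-1/2})$, which is the ratio of the $B_n$ and $D_n$ Weyl denominators, combined with $\Lambda(\nu(\sigma))+\rho_K=\Lambda(\sigma)+\rho_M$ and the coset decomposition $W(B_n)=W(D_n)\sqcup W(D_n)s$ (where $s$ flips $e_{n+1}$, $\det s=-1$) yields $(\kappa^+-\kappa^-)\otimes\iota^*\nu(\sigma)=\sigma-w_0\sigma$ exactly as you describe. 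This is the standard route and surely the one in \cite{Bunke}.

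The third assertion is where your argument has a genuine gap. Your justification is that $\iota^*(\nu(\sigma)\otimes\kappa)=(\sigma+w_0\sigma)+(\text{terms in }W(A)\text{-orbit pairs})$, ``so that the decomposition of $\nu(\sigma)\otimes\kappa$ into $K$-irreducibles sorts into two groups.'' The premise is vacuous --- \emph{every} element of $\iota^*R(K)$ is $W(A)$-invariant, hence automatically a sum of orbit pairs and fixed classes --- and the conclusion does not follow from it. What is actually needed is this: let $\nu':=(\iota^*)^{-1}(\sigma+w_0\sigma)=\sum_\mu n_\mu\,\mu$ in $R(K)$ and let $\nu(\sigma)\otimes\kappa=\sum_\mu m_\mu\,\mu$ be the genuine decomposition; the existence of the splitting $\nu^\pm(\sigma)$ is equivalent to $|n_\mu|\le m_\mu$ and $n_\mu\equiv m_\mu\pmod 2$ for every $\mu\in\hat K$. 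Without that, one can easily have (for example) a single $K$-constituent whose restriction already contains $\sigma+w_0\sigma$ together with extra orbit pairs, in which case no partition of the constituents works. So you must actually compute $\nu(\sigma)\otimes\kappa$ (via the Klimyk/Brauer formula for tensoring with the spin representation of $B_n$, i.e.\ the sum of $V_{\Lambda(\nu(\sigma))+\varepsilon}$ over admissible sign patterns $\varepsilon\in\{\pm\tfrac12\}^n$, with the degenerate patterns cancelled) and identify $\nu'$ inside it, checking the inequality and parity above. This is precisely the ``bookkeeping'' you defer, but it is the entire content of the third statement, not a routine afterthought; as written, the step from the character identity to the existence of the splitting would not survive refereeing.
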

\begin{proof}
This is proved by Bunke and Olbrich, \cite{Bunke} , Proposition 1.1.
\end{proof}

\subsection{}\label{subsecprs}

We parametrize the principal series as follows. Given $\sigma\in\hat{M}$ with
$(\sigma,V_\sigma) \in \sigma$, let $\mathcal{H}^{\sigma}$ denote the space of
$V_\sigma$-valued $L^2$-functions on $K$ which satisfy
$\Phi(mk)=\sigma(m)\Phi(k)$ for allmost all  $k\in K$, all $m\in M$.
Then for $\lambda\in\mathbb{C}$ and $\Phi\in H^{\sigma}$ we define
$\pi_{\sigma,\lambda}(g)\Phi(k):=e^{(i\lambda+n)H(kg)}\Phi(\kappa(kg))$.
Recall that the representations $\pi_{\sigma,\lambda}$ are unitary iff 
$\lambda\in\mathbb{R}$. Moreover, by \cite[Theorem 7.2, Theorem 7.12]{Knapp},
$\pi_{\sigma,\lambda}$ and $\pi_{\sigma',\lambda'}$, $\lambda,
\lambda'\in\mathbb{C}$ are 
equivalent iff either $\sigma=\sigma'$, $\lambda=\lambda'$ or
$\sigma'=w_{0}\sigma$, $\lambda'=-\lambda$. Let $\nu\in\hat{K}$. The
restriction of
$\pi_{\sigma,\lambda}$ to $K$ coincides with the
induced 
representation ${\rm{Ind}}_{M}^{K}(\sigma)$ and thus by Frobenius 
reciprocity \cite[p.208]{Knapp} the multiplicity of 
$\nu$ in $\pi_{\sigma,\lambda}$  equals $\left[\nu:\sigma\right]$. By
\cite[Theorem 8.1.4]{Goodman} one has $\left[\nu:\sigma\right]\leq 1$. Let
$c(\sigma):=\sum_{j=2}^{n+1}(k_{j}(\sigma)+\rho_{j})^{2}-\sum_{j=1}^{n+1}
\rho_{j}^{2}$. Then, if $\Omega$ is the Casimir element with respect to the
normalized Killing form
one has $\pi_{\sigma,\lambda}(\Omega)=-\lambda^2+c(\sigma)$,
see \cite[Corollary 2.4]{MP1}. By $\Theta_{\sigma,\lambda}$ we will denote the
global character of $\pi_{\sigma,\lambda}$. 

\subsection{}
We let $\Gamma$ be a discrete, torsion free subgroup of $G$ with
$\vol(\Gamma\backslash G)<\infty$ and we assume that $\Gamma$ satisfies
\eqref{a1}
We let 
\begin{align*}
X:=\Gamma\backslash G/K.
\end{align*} 
We equip
$X$ with the Riemannian metric
induced from $\widetilde{X}$. Let $\mathfrak{P}$ be a fixed
set of representatives of $\Gamma$-inequivalent cuspidal parabolic subgroups of
$G$. Then $\mathfrak{P}$ is finite. Let 
$p:=\#\mathfrak{P}$. 
Then $p$ equals the number of cusps of $X$. Let 
$P_0:=MAN$. Without loss of
generality we will assume
that $P_{0}\in\mathfrak{P}$. 
For every $P\in\mathfrak{P}$, there exists a $k_{P}\in K$ such that
$P=N_{P}A_{P}M_{P}$ with $N_{P}=k_{P}Nk_{P}^{-1}$,
$A_{P}=k_{P}Ak_{P}^{-1}$, $M_{P}=k_{P}Mk_{P}^{-1}$. We let
$k_{P_{0}}=1$.
If $P\in\mathfrak{P}$, $P'\in\mathfrak{P}$ we will say that
$\sigma_{P}\in\hat{M}_{P}$ and $\sigma_{P'}\in\hat{M}_{P'}$ are associated if
for all $m_{P'}\in M_{P'}$ one has
$\sigma_{P'}(m_{P'})=\sigma_{P}(k_{P}k_{P'}^{-1}m_{P'}k_{P'}k_{P}^{-1})$. For
$\sigma_{P}\in\hat{M}_{P}$ we will denote by $\boldsymbol{\sigma}$ the set of
all $\sigma_{P'}\in\hat{M}_{P'}$ associated to $\sigma_{P}$, where $P'$ runs
through $\mathfrak{P}$. For $g\in G$, we define
$n_{P}(g)\in N_{P}$, $H_{P}(g)\in
\mathbb{R}$ and $\kappa_{P}(g)\in K$ by
$g=n_{P}(g)\exp{(H_{P}(g)H_1)}\kappa_{P}(g)$. 
We let $H(g):=H_{P_0}(g)$.

\section{Selberg zeta functions}\label{secselz}
\setcounter{equation}{0}
This section is devoted to a preliminary description and investigation of the
functions 
$Z(s,\sigma)$. We proceed analogously to \cite[section (3.1)]{Bunke}.\\ 
Let $\Phi$ be the geodesic flow on $SX$. Since $K$ acts transitively on the
unit-sphere of $\pL$, there is a
canonical isomorphism $SX\cong\Gamma\backslash G/M$
and 
in this way one obtains an isomorphism
\begin{align}\label{An}
TSX\cong \Gamma\backslash
G\times_{\Ad}(\overline{\mathfrak{n}}\oplus\mathfrak{n}\oplus\mathfrak{a}).
\end{align}
We fix an $M$-invariant inner product on
$\overline{\mathfrak{n}}\oplus\mathfrak{n}\oplus\mathfrak{a}$ and 
equip $TSX$ with the induced metric. With respect to \eqref{An},
$\Phi$ is given on
$\Gamma\backslash G/M$ as
$\Phi(t,\Gamma gM)=\Gamma g\exp{-tH_1}M$,
where $\Phi$ is well-defined since $M$ and $A$ commute. Thus, with respect to
\eqref{An}, $d\Phi$, regarded 
as a flow on $TSX$, is given by
\begin{align*}
d\Phi(t,\left[\Gamma g,Y\right])=\left[\Gamma
g\exp{-tH_1},\Ad(\exp{tH_1})Y\right],\quad Y\in
\overline{\mathfrak{n}}\oplus\mathfrak{a}\oplus\mathfrak{n},\:g\in G.
\end{align*}
The spaces $\overline{\mathfrak{n}}$, $\mathfrak{n}$ and $\mathfrak{a}$ are
invariant
under $\Ad(A)$ and $\Ad(\exp{tH_1})$ acts on these spaces by $e^{-t}\cdot\Id$
respectively $e^{t}\cdot\Id$ respectively $\Id$. Thus the decomposition 
on the right hand side of \eqref{An} gives the Anosov-decomposition in
\eqref{splitting}.\\
Let $\sigma$ be a unitary finite dimensional
representation of $M$
on $V_{\sigma}$
and let $V(\sigma):=\Gamma\backslash(G\times_{\sigma}V_\sigma)$. Then
$V(\sigma)$ 
is a vector bundle over $SX$ and $\Phi$ lifts to a flow $\Phi_\sigma$ on
$V(\sigma)$ which is defined by
$\Phi_{\sigma}(t,\left[\Gamma g,v\right]):=\left[\Gamma
g\exp(-tH_1),v\right]$.\\
Next we describe the closed geodesics of $X$ in terms of the conjugacy classes
of $\Gamma$ 
which we will denote by $\CC(\Gamma)$. There is
a canonical
one-to one correspondence between $\CC(\Gamma)$ and the set of free homotopy
classes of
closed paths in $X$. 
For $\gamma\in\Gamma$ we will denote its conjugacy class by
$\left[\gamma\right]$. Moreover, by $f(\left[\gamma\right])$ we will denote the 
free homotopy class of closed paths associated to $\left[\gamma\right]$.
Now for $\left[\gamma\right]\in \CC(\Gamma)$ we let
$\ell(\gamma)$ be the infimum over all lengths
of the piecewise smooth curves belonging to $f(\left[\gamma\right])$.
We let $\CC(\Gamma)_{\s}$ be the set of conjugacy classes $\left[\gamma\right]$
such 
that $\gamma$ is semisimple. Moreover we let $\CC(\Gamma)_{\parab}$ be the set
of conjugacy classes 
$\left[\gamma\right]$ such that $\gamma$ is $\Gamma$-conjugate 
to an element of $\Gamma\cap N_P$, $P\in\mathfrak{P}$. Then by \cite[Lemma
5.3]{Warner1}
and our assumption \eqref{a1} 
we have $\CC(\Gamma)=\CC(\Gamma)_{\s}\cup \CC(\Gamma)_{\parab}$ and $
\CC(\Gamma)_{\s}\cap\CC(\Gamma)_{\parab}=\left[1\right]$. For 
$\left[\gamma\right]\in\CC(\Gamma)_{\parab}$ it is easy to see that
$\ell(\gamma)=0$.\\ On the other hand let $\gamma\in\Gamma$
be 
semimisple, $\gamma\neq 1$. Then by the same argument as in the proof of 
\cite[Lemma 6.6]{Wallach} for the cocompact case, it follows that there exists
$g\in G$, $t_\gamma\in (0,\infty)$ and $m_\gamma \in M$ 
such that $g\gamma g^{-1}=m_\gamma \exp{t_\gamma H_1}$, where $t_\gamma$ is
unique and $m_\gamma$ is 
unique up to conjugation in $M$. Thus the geodesic
$\widetilde{c}_{\gamma}(t):=g^{-1}\exp{(tH_1)}K$  
in $\widetilde X$ is stabilized by $\gamma$ and projects to a closed geodesic
$c_{\gamma}\in f(\left[\gamma\right])$ 
of length $t_\gamma$. Applying \cite[Proposition 4.2]{Bishop}
we conclude that the elements in $\widetilde{X}$ which minimize $d(x, \gamma x)$
are exactly the
elements $\widetilde{c}_{\gamma}(t)$, $t\in\R$. It follows that
$\ell(\gamma)=t_\gamma$. Proceeding as in the proof of \cite[Lemma 4.1]{Gang},
one can show that for every semisimple element $\gamma\in\Gamma$ its
centralizer 
$Z(\gamma)$ in $\Gamma$ is infinite cyclic and generated by a semisimple
element 
$\gamma_0$. Thus we may define $n_\Gamma\in\N$ by
$\gamma=\gamma_0^{n_\Gamma(\gamma)}$. 
We call $\gamma$ resp. $\left[\gamma\right]$ prime if $n_\Gamma(\gamma)=1$. Then
it is easy to see that 
this is equivalent to saying that $c_{\gamma}$ is a prime
geodesic.\\
Putting everything together, it follows that the Selberg zeta function from
equation \eqref{SZF}
can be written as
\begin{align}\label{SZF2}
Z(s,\sigma)=\prod_{\substack{\left[\gamma\right]\in\CC(\Gamma)_{\s}-\left[
1\right]\\
\left[\gamma\right]\:\text{prime}}}\prod_{k=0}^\infty\det{
\left(\Id-\sigma(m_\gamma)\otimes
S^k\Ad(m_\gamma\exp(\ell(\gamma)H_1))|_{\bar{\mathfrak{n}}}e^{
-(s+n)\ell(\gamma)}\right)}.
\end{align}
In order to establish the convergence of the infinite product in \eqref{SZF2}, 
we remark that
\begin{align}\label{estdet}
{{\det(\Id-\Ad(m_{\gamma}a_{\gamma})|_{{\bar{\mathfrak
{n}}}})}}^{-1}\leq \left(1-e^{-l(\gamma)}\right)^{-n}.
\end{align}
Moreover, using the volume growth 
$\vol B_R(x)\leq c e^{2nR}$ on $\widetilde X$, the argument of \cite[Lemma 4.3,
Proposition 4.4]{MePo} carries over to 
higher dimensions and one obtains an estimate 
\begin{align}\label{LS}
\#\{\left[\gamma\right]\in\CC(\Gamma)_{\s}\colon \ell(\gamma)\leq
R\}\leq C e^{2nR}
\end{align}
for all $R$ and some constant $C>0$. 
Now arguing as in \cite[equation (3.6)]{Bunke} one computes
\begin{align}\label{LogSZF}
\log{Z(s,\sigma)}=-\sum_{\left[\gamma\right]\in\CC(\Gamma)_{\s}-\left[
1\right]}\frac{\Tr\left(\sigma(m_{\gamma})\right)e^{
-(s+n)\ell(\gamma)}}{n_{\Gamma}(\gamma){\det(\Id-\Ad(m_{\gamma}a_
{\gamma})|_{{\bar{\mathfrak{n}}}})}}
\end{align}
and applying \eqref{estdet} and \eqref{LS} it follows that the infinte product
in \eqref{SZF} resp. \eqref{SZF2} converges absolutly and 
locally uniformly on the set $\Real(s)>2n$. \\
The formula \cite[equation
(3.4)]{Bunke} 
for the logarithmic derivative of $R(s,\sigma)$ remains valid 
also in our case. A similar formula can also be obtained for the logarithmic
derivative of $R(s,\tau)$. Thus it follows from \eqref{LS} that the infinit
products in \eqref{Ruelle 1} converges absolutely for $\Real(s)>2n$ and that the
infinite product in \eqref{Ruelle 2} converges absolutely for $s\in\C$
with 
$\Real(s)$ sufficiently large. 
Also, arguing as in cocompact case, \cite[Proposition 3.4]{Bunke}, \cite[section
6]{Wotzke}, we 
can express the functions $R(s,\sigma)$ and $R(s,\tau)$ as weighted products of
Selberg zeta functions.

\section{Some properties of the C-matrix}\label{secC}
\setcounter{equation}{0}
In this section we describe some of the main properties of the $C$-matrix
associated to 
the Eisenstein series which are needed for our application of the invariant
trace formula.\\ 
For $P\in\mathfrak{P}$, $\nu\in\hat{K}$, $\sigma_{P}\in\hat{M}_P$,  with
$\left[\nu:\sigma_P\right]\neq 0$ 
we let $\mathcal{E}_{P}(\nu,\sigma_{P})$ be the set of all
continuous functions $\Phi$ on $G$ which are left-invariant under $N_PA_P$ such
that for all
$x\in G$ the function
$m\mapsto \Phi_{P}(mx)$ belongs to $L^2(M,\sigma_P)$, the
$\sigma_{P}$-isotypical component of the
right regular representation of $M_{P}$, and such that 
for all $x\in G$ the function $k\mapsto \Phi_{P}(xk)$ belongs to the
$\nu$-isotypical component of the right regular representation of $K$. 
We define an inner product on $\mathcal{E}_{P}(\nu,\sigma_{P})$ as follows.
Every element of $\mathcal{E}_{P}(\nu,\sigma_{P})$ can be identified canonically
with
a function on $K$. For $\Phi,\Psi\in\mathcal{E}_{P}(\nu,\sigma_{P})$ we now set
$\left<\Phi,\Psi\right>:=\vol(\Gamma\cap N_P\backslash
N_P)\int_K\Phi(k)\bar{\Psi}(k)dk$.
Now we define 
a Hilbert space $\mathcal{E}_P(\sigma_P)$ by
\begin{align*}
\mathcal{E}_P(\sigma_P):=\bigoplus_{\substack{\nu\in\hat{K}\\\left[
\nu:\sigma_P\right]\neq 0}}\mathcal{E}_P(\nu,\sigma_P).
\end{align*}
For $\lambda\in\C$ let $\pi_{\Gamma,\sigma_{P},\lambda}$ be the representation
of $G$ on
$\mathcal{E}_{P}(\sigma_{P})$ defined by
\begin{align*}
\pi_{\Gamma,\sigma_{P},\lambda}(g)\Phi(n_{P}a_{P}k):=e^{(\lambda+n)H_{P}(kg)}
\Phi(kg),\quad n_{P}\in N_{P},\: a_{P}\in A_{P}, k\in K,\quad
\Phi\in\mathcal{E}_{P}(\sigma_P,\nu).
\end{align*}
For $\sigma\in\hat{M}$ and $\nu\in\hat{K}$ with $\left[\nu:\sigma\right]\neq 0$
put
\begin{align*}
\boldsymbol{\mathcal{E}}(\nu:\sigma
):=\bigoplus_{\sigma_P\in\boldsymbol{\sigma}}\mathcal{E}(\nu:\sigma_P);\quad
\boldsymbol{\mathcal{E}}(\sigma):=\bigoplus_{
\sigma_P
\in\boldsymbol{\sigma}}\mathcal{E}_{P}(\sigma_{P}); \quad
\boldsymbol{\pi}_{\Gamma,\sigma,\lambda}:=\bigoplus_{\sigma_{P}
\in\boldsymbol{\sigma}}\pi_{\Gamma,\sigma_{P},\lambda}.
\end{align*}
Then
$\boldsymbol{\pi}_{\Gamma,
\sigma,\lambda}$ is a representation of $G$ on
$\boldsymbol{\mathcal{E}}(\sigma)$.
The pair
$(\boldsymbol{\mathcal{E}}(\sigma),\boldsymbol{\pi}_{\Gamma,\sigma,\lambda})$
can be related to the the principal series as
follows. For $\sigma\in\hat{M}$ let $\Hom_{M}(V_\sigma,L^{2}(M))$ denote the set
of intertwining operators between $\sigma$ and the right regular
representation. The dimension of this space equals the degree of
$\sigma$. For $\nu\in\hat{K}$ with $\left[\nu:\sigma\right]\neq 0$ let
$(\mathcal{H}^\sigma)^\nu$ denote the $\nu$-isotypical component of
$\mathcal{H}^\sigma$, where 
the latter space is as in section \ref{subsecprs}. Then we define
$I_{P,\sigma}(\nu)\colon\Hom_{M}(V_\sigma,L^{2}(M))\otimes
(\mathcal{H}^{\sigma})^\nu\rightarrow \mathcal{E}_{P}(\sigma_{P})$
by
\begin{align}\label{Erste Definition des Isomorphismus I(P)}
I_{P,\sigma}(u\otimes
\Phi)(g):=u\circ\Phi(mk_{p}^{-1}\kappa_{P}(g))(m^{-1})\:\text{for almost all
$m\in M$}.
\end{align}
Then $I_{P,\sigma}(\nu)$ is an isometry. We let $I_{P,\sigma}$ be the direct sum
of 
the $I_{P,\sigma}(\nu)$ and define
\begin{align*}
\boldsymbol{\mathcal{L}}(\sigma):=\bigoplus_{P\in\mathfrak{P}}\Hom_{
M}(V_\sigma,L^{2}(M));\quad
\mathbf{I}_{\sigma}:\boldsymbol{\mathcal{L}}(\sigma)\otimes
\mathcal{H}^{\sigma}\longrightarrow
\boldsymbol{\mathcal{E}}(\sigma),\:\mathbf{I}_{\sigma}=\bigoplus_{
P\in\mathfrak{P}}I_{P,\sigma}
\end{align*}
The map $\mathbf{I}_{\sigma}$ is an isomorphism and an intertwining operator
between the
representations $1\otimes \pi_{\sigma,\lambda}$ and
$\boldsymbol{\pi}_{\Gamma,\sigma,i\lambda}$ , where $1$ stands for the
trivial representation of $G$ on $\boldsymbol{\mathcal{L}}(\sigma)$.\\
Out of the constant terms associated to the Eisenstein series, one can construct
operators
\begin{align*}
\mathbf{C}(\nu:\sigma:\lambda):\boldsymbol{\mathcal{E}}(\nu:\sigma
) \to\boldsymbol{\mathcal{E}}(\nu:w_0\sigma
);\quad
\mathbf{C}(\sigma:\lambda):=\bigoplus_{\substack{\nu\in\hat{K}\\ \left[
\nu:\sigma\right]\neq 0}}\mathbf{C}
(\nu:\sigma:\lambda)\colon\boldsymbol{\mathcal{E}}(\sigma
) \to\boldsymbol{\mathcal{E}}(w_0\sigma
),
\end{align*}
see \cite[section 3]{MP}. For every $\sigma$ the function
$\lambda\mapsto
\mathbf{C}(\sigma:\lambda)$ is meromophic in $\lambda\in\C$ and 
has no poles on $i\R$. Moreover it satisfies the functional equation
\begin{align}\label{FEEs}
\mathbf{C}(w_0\sigma:\lambda)\mathbf{C}(\sigma
:-\lambda)=\Id;\quad\mathbf{C}(\sigma:\lambda)^{*}=\mathbf{C}(w_0\sigma:\bar{
\lambda}).
\end{align}
The representations
$\boldsymbol{\pi}_{\Gamma,\sigma,\lambda}$ and
$\boldsymbol{\pi}_{\Gamma,w_0\sigma,-\lambda}$ are equivalent
and $\mathbf{C}(\sigma:\lambda)$ is an intertwining operator
between $\boldsymbol{\pi}_{\Gamma,\sigma,\lambda}$ and
$\boldsymbol{\pi}_{\Gamma,w_0\sigma,-\lambda}$.\\
Let $\sigma\in\hat{M}$. According to \cite{Hoffmann2}, the map
$\mathbf{I}_{\sigma}$ can be used to relate the
intertwining operators $\mathbf{C}(\sigma:s)$ to the Knapp-Stein
intertwining operators of the principal series representations associated to
$P_0$. One obtains a result similar to the adelic case, where the
C-matrix
factorizes into a product of the Knapp-Stein intertwining operator and the
intertwining operators at the finite places. We first briefly recall the
definition of the Knapp-Stein operators. Let $\Theta$ denote the lift of
$\theta$ to $G$, let
$\bar{N}:=\Theta(N)$ and let
$\bar{P}_0:=\bar{N}AK$ be the parabolic subgroup opposite to
$P_0$. Let $\sigma\in\hat{M}$. For $\Phi\in\mathcal{H}^\sigma$ we define a 
function $\Phi_{\lambda}$ on $G$ by
$\Phi_\lambda(nak):=e^{\left(i\lambda+n\right)H(a)}\Phi(k)$. Let
$(\mathcal{H}^\sigma)^K$ denote 
the $K$-finite vectors in $\mathcal{H}^\sigma$. Let $m_0\in K$ be a
representative for $w_0$. Then for $\Iim(\lambda)<0$ and
$\Phi\in(\mathcal{H}^\sigma)^K$ the
integral
\begin{align}\label{IntIO}
J_{\bar{P}_{0}|P_{0}}(\sigma,\lambda)(\Phi)(k):=\int_{\bar{N}}{\Phi_{\lambda}
(\bar{n}k)d\bar{n}}=\int_{N}{\Phi_{\lambda}(m_{0}
nm_{0}^{-1}k)dn}
\end{align}
is convergent and $J_{\bar{P}_{0}|P_{0}}(\sigma,\lambda)$ extends to an
intertwining operator
$J_{\bar{P}_{0}|P_{0}}(\sigma,\lambda):\mathcal{H}^{\sigma}
\longrightarrow \mathcal{H}^{\sigma}$ between $\pi_{\sigma,\lambda}$ and
$\pi_{\sigma,\lambda,\bar{P}_{0}}$,
where $\pi_{\sigma,\lambda,\bar{P}_{0}}$ denotes the principal series
representation associated to $\sigma$, $\lambda$ and $\bar{P}_{0}$.
Moreover, by \cite{Knapp Stein}, as an operator-valued function
$J_{\bar{P}_{0}|P_{0}}(\sigma,\lambda)$ has a meromorphic continuation to
$\mathbb{C}$. If $\sigma\neq w_{0}\sigma$,
$J_{\bar{P}_{0}|P_{0}}(\sigma,\lambda)$ has no poles on $i\mathbb{R}$ and is
invertible there. If $\sigma=w_{0}\sigma$,
$J_{\bar{P}_{0}|P_{0}}(\sigma,\lambda)$ is regular and invertible on
$i\mathbb{R}-\{0\}$. Next one defines an operator
$A(w_{0}):\mathcal{H}^{\sigma}\rightarrow \mathcal{H}^{w_{0}\sigma}$ by
$A(w_{0})\Phi(k):=\Phi(m_{0}k)$. Then $A(w_{0})$ intertwines
$\pi_{\sigma,\lambda,\bar{P}_0}$
and $\pi_{w_0\sigma,-\lambda}$. Thus the operator 
\begin{align}\label{DefJ}
J_{P_{0}}(\sigma,\lambda)\colon
\mathcal{H}^\sigma\to\mathcal{H}^{w_0\sigma},\quad
\:J_{P_{0}}(\sigma,\lambda):=A(w_{0
})J_{\bar{P}_{0}|P_{0}}(\sigma,\lambda)
\end{align}
is, wherever it is defined, an intertwining operator between
$\pi_{\sigma,\lambda}$ and $\pi_{w_{0}\sigma,-\lambda}$. 
If $\nu\in\hat{K}$ with $\left[\nu:\sigma\right]\neq 0$ we denote by
$J_{P_{0}}(\nu,\sigma,\lambda)$ the
restriction of
$J_{P_{0}}(\sigma,\lambda)$
to a map from
$(\mathcal{H}^{\sigma})^{\nu}$
to $(\mathcal{H}^{w_{0}\sigma})^{\nu}$. 
Now the C-matrix is related to the Knapp-Stein operator as follows.
\begin{prop}\label{Faktorisierung C-Term}
There exist a meromorphic
$\Hom(\boldsymbol{\mathcal{L}}(\sigma),\boldsymbol{\mathcal{L}}
(w_0\sigma))$-valued function $\mathbf{T}(\sigma,\lambda)$ which is regular
on $i\R-\{0\}$ such that in 
the sense of meromorphic functions one has
\begin{align*}
\mathbf{C}(\nu:\sigma:\lambda)(\mathbf{I}_{\sigma}(u\otimes\Phi))=\mathbf{I}_{
w_0
\sigma}\left(\mathbf
{T}(\sigma,\lambda)(u)\otimes J_{P_{0}}(\nu,\sigma,-i\lambda)(\Phi)\right)
\end{align*}
for all $u\in\boldsymbol{\mathcal{L}}(\sigma)$ and all
$\Phi\in(\mathcal{H}^{\sigma})^{\nu}$.
\end{prop}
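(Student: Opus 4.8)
The plan is to reduce the $C$-matrix, which is defined globally out of the constant terms of the Eisenstein series attached to $\Gamma$, to the local Knapp--Stein operator by means of the intertwining isomorphism $\mathbf{I}_\sigma$. The starting point is the fact, recalled just above the statement, that $\mathbf{I}_\sigma$ intertwines $1\otimes\pi_{\sigma,\lambda}$ with $\boldsymbol{\pi}_{\Gamma,\sigma,i\lambda}$, and that $\mathbf{C}(\sigma:\lambda)$ intertwines $\boldsymbol{\pi}_{\Gamma,\sigma,\lambda}$ with $\boldsymbol{\pi}_{\Gamma,w_0\sigma,-\lambda}$. Hence the composite $\mathbf{I}_{w_0\sigma}^{-1}\circ\mathbf{C}(\sigma:i\lambda)\circ\mathbf{I}_\sigma$ is an intertwining operator between $1\otimes\pi_{\sigma,\lambda}$ and $1\otimes\pi_{w_0\sigma,-\lambda}$ on $\boldsymbol{\mathcal{L}}(\sigma)\otimes\mathcal{H}^\sigma$. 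Since $J_{P_0}(\sigma,\lambda)$ is, by the discussion preceding the statement, an intertwining operator $\mathcal{H}^\sigma\to\mathcal{H}^{w_0\sigma}$ between $\pi_{\sigma,\lambda}$ and $\pi_{w_0\sigma,-\lambda}$, the key algebraic input is a Schur-type statement: the space of intertwining operators from $1\otimes\pi_{\sigma,\lambda}$ to $1\otimes\pi_{w_0\sigma,-\lambda}$ is, on the $K$-finite level and for generic $\lambda$, exactly $\Hom(\boldsymbol{\mathcal{L}}(\sigma),\boldsymbol{\mathcal{L}}(w_0\sigma))\otimes J_{P_0}(\sigma,\lambda)$, because $\pi_{\sigma,\lambda}$ is irreducible for $\lambda$ off a discrete set and the space of self-intertwiners is one-dimensional. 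This forces the existence of a $\Hom(\boldsymbol{\mathcal{L}}(\sigma),\boldsymbol{\mathcal{L}}(w_0\sigma))$-valued function $\mathbf{T}(\sigma,\lambda)$ with
\[
\mathbf{I}_{w_0\sigma}^{-1}\circ\mathbf{C}(\sigma:i\lambda)\circ\mathbf{I}_\sigma=\mathbf{T}(\sigma,\lambda)\otimes J_{P_0}(\sigma,\lambda),
\]
which is precisely the claimed identity after unravelling the $\nu$-components and rewriting $\lambda=-i\lambda$ in the Knapp--Stein normalization, i.e. $J_{P_0}(\nu,\sigma,-i\lambda)$.

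It then remains to establish the analytic properties of $\mathbf{T}(\sigma,\lambda)$: meromorphy in $\lambda$ and regularity on $i\R\setminus\{0\}$. Meromorphy follows by dividing: $\mathbf{C}(\sigma:\lambda)$ is meromorphic and $\mathbf{I}_\sigma$, $\mathbf{I}_{w_0\sigma}$ are $\lambda$-independent isometric isomorphisms, while $J_{P_0}(\sigma,\lambda)$ has a meromorphic continuation by the Knapp--Stein theory cited above; hence $\mathbf{T}(\sigma,\lambda)$ is meromorphic on $\C$ as a quotient. For the regularity statement I would split into the two cases distinguished before the proposition. If $\sigma\neq w_0\sigma$, then $J_{P_0}(\sigma,\lambda)$ is holomorphic and invertible on $i\R$ and $\mathbf{C}(\sigma:\lambda)$ has no poles on $i\R$, so $\mathbf{T}(\sigma,\lambda)=\big(\mathbf{I}_{w_0\sigma}^{-1}\mathbf{C}(\sigma:i\lambda)\mathbf{I}_\sigma\big)\circ\big(\mathrm{id}\otimes J_{P_0}(\sigma,\lambda)^{-1}\big)$ is regular on $i\R$, in particular on $i\R\setminus\{0\}$. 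If $\sigma=w_0\sigma$, the same computation works on $i\R\setminus\{0\}$ since there $J_{P_0}(\sigma,\lambda)$ is regular and invertible, which is exactly why the statement only claims regularity away from the origin.

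The main obstacle is the Schur-lemma step in the first paragraph: one must know that for generic $\lambda$ the principal series $\pi_{\sigma,\lambda}$ is irreducible, so that the intertwining space is one-dimensional and the factorization is forced rather than merely possible. For $G=\Spin(d,1)$ this irreducibility for $\lambda$ outside a countable set is classical, but one has to be a little careful at reducibility points and at $\lambda=0$, and track that the formula, once established on the open dense set of $\lambda$ where everything is regular and irreducible, propagates to an identity of meromorphic functions by analytic continuation. A secondary technical point is checking that the isometry $I_{P,\sigma}(\nu)$ of \eqref{Erste Definition des Isomorphismus I(P)} transports the Knapp--Stein operator to the $P$-component of $\mathbf{C}$ with the correct normalization in $\lambda$ versus $-i\lambda$; this is a bookkeeping computation with the constant terms of the Eisenstein series, carried out as in \cite{Hoffmann2} and \cite[section 3]{MP}, and I would simply invoke those references rather than redo it.
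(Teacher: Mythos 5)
Your proposal is correct in outline, but it is not the paper's proof: the paper disposes of Proposition \ref{Faktorisierung C-Term} in one line by citing Hoffmann's Theorem 7.1 in \cite{Hoffmann2}, which proves the factorization in the more general setting of a rank-one lattice with a Hecke operator. What you supply instead is a self-contained reconstruction via Schur's lemma: since $\mathbf{I}_{w_0\sigma}^{-1}\circ\mathbf{C}\circ\mathbf{I}_\sigma$ intertwines $1\otimes\pi_{\sigma,\lambda}$ with $1\otimes\pi_{w_0\sigma,-\lambda}$ on $K$-finite vectors, and the space $\Hom_{(\gL,K)}(\pi_{\sigma,\lambda},\pi_{w_0\sigma,-\lambda})$ is one-dimensional and spanned by $J_{P_0}(\sigma,\lambda)$ for $\lambda$ off the discrete reducibility set, a finite-dimensional factor $\mathbf{T}(\sigma,\lambda)$ must split off; meromorphy then follows by reading the identity on a single $K$-type $\nu$ with $\left[\nu:\sigma\right]=1$, where $J_{\bar{P}_0|P_0}$ acts by the scalar $c_\nu$ of \eqref{cFnktn}, and the identity propagates to all of $\C$ by analytic continuation. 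This buys a proof from first principles at the cost of making explicit the inputs the citation hides: generic irreducibility of the principal series for $\Spin(d,1)$, and the fact (resting on the functional equations of Eisenstein series) that $\mathbf{C}(\sigma:\lambda)$ is a $G$-intertwiner in the sense of meromorphic functions and not only on the unitary axis. The one step you should tighten is the regularity argument: because of the shift between the two parametrizations, the invertibility needed for $\mathbf{T}(\sigma,\cdot)$ on $i\R-\{0\}$ is that of $J_{\bar{P}_0|P_0}(\sigma,\mu)$ at the real points $\mu=-i\lambda\neq 0$, i.e.\ on the unitary axis of $\pi_{\sigma,\mu}$, whereas your displayed formula mixes the $\lambda$- and $i\lambda$-variables; this is the same ambiguity already present in the text preceding the proposition, but it should be resolved cleanly before the formula is differentiated in Corollary \ref{Korollar fur Streuterm}.
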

\begin{proof}
The proposition is proved in \cite[Theorem 7.1]{Hoffmann2} for the more general
setting of a rank-one
lattice and a Hecke operator. 
\end{proof}
Proposition \ref{Faktorisierung C-Term} gives the following corollary.
\begin{kor}\label{Korollar fur Streuterm}
Let $\alpha$ be a $K$-finite Schwarz-function. Then in the sense of meromorphic
functions one has 
\begin{align*}
&\Tr\left(\boldsymbol{\pi}_{\Gamma,\sigma,\lambda}(\alpha)\mathbf{C
}(
\sigma:\lambda)^{-1}\frac{d}{d\lambda}\mathbf{C}(\sigma:\lambda)\right)
=\Tr\left(\mathbf{T}(\sigma,\lambda)^{-1}\frac{d}{d\lambda}\mathbf{T}(\sigma,
\lambda)\right)\Theta_{\sigma,-i\lambda}(\alpha)\\ &-i{\dim}(\sigma)p
\Tr\left(\pi_{\sigma,-i\lambda}(\alpha)J_{\bar{P}_{0}|P_{0}}(\sigma,-i\lambda)^{
-1}\frac{d}{
dz}J_
{\bar{P}_{0}|P_
{0}}(\sigma,-i\lambda)\right).
\end{align*}
\end{kor}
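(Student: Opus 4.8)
The plan is to move the whole computation onto the model space $\boldsymbol{\mathcal{L}}(\sigma)\otimes\mathcal{H}^\sigma$ by means of the isometry $\mathbf{I}_\sigma$, so that the two contributions on the right-hand side of the corollary become the contributions of the two tensor factors. The first step is to recast Proposition \ref{Faktorisierung C-Term} as an operator identity: summing its identity over all $\nu\in\hat K$ with $[\nu:\sigma]\neq 0$ and using that $J_{P_0}(\sigma,\mu)$ is $K$-equivariant, hence equal to $\bigoplus_\nu J_{P_0}(\nu,\sigma,\mu)$, one obtains
\begin{align*}
\mathbf{C}(\sigma:\lambda)=\mathbf{I}_{w_0\sigma}\circ\bigl(\mathbf{T}(\sigma,\lambda)\otimes J_{P_0}(\sigma,-i\lambda)\bigr)\circ\mathbf{I}_\sigma^{-1}
\end{align*}
as meromorphic families of operators on $\boldsymbol{\mathcal{E}}(\sigma)$.

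Since $\mathbf{I}_\sigma$ and $\mathbf{I}_{w_0\sigma}$ do not depend on $\lambda$, the next step is to differentiate. Combining the product rule for $\frac{d}{d\lambda}(A\otimes B)$ with the chain rule $\frac{d}{d\lambda}J_{P_0}(\sigma,-i\lambda)=-i\,(\frac{d}{dz}J_{P_0})(\sigma,-i\lambda)$ --- which is where the factor $-i$ enters --- one finds that $\mathbf{C}(\sigma:\lambda)^{-1}\frac{d}{d\lambda}\mathbf{C}(\sigma:\lambda)$ is conjugate by $\mathbf{I}_\sigma$ to
\begin{align*}
\bigl(\mathbf{T}(\sigma,\lambda)^{-1}\tfrac{d}{d\lambda}\mathbf{T}(\sigma,\lambda)\bigr)\otimes\Id\;-\;i\,\Id\otimes\bigl(J_{P_0}(\sigma,-i\lambda)^{-1}\tfrac{d}{dz}J_{P_0}(\sigma,-i\lambda)\bigr)
\end{align*}
acting on $\boldsymbol{\mathcal{L}}(\sigma)\otimes\mathcal{H}^\sigma$. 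On the other hand, $\mathbf{I}_\sigma$ intertwines $1\otimes\pi_{\sigma,-i\lambda}$ with $\boldsymbol{\pi}_{\Gamma,\sigma,\lambda}$, and since $1$ is the trivial representation of $G$ on $\boldsymbol{\mathcal{L}}(\sigma)$ one has $(1\otimes\pi_{\sigma,-i\lambda})(\alpha)=\Id_{\boldsymbol{\mathcal{L}}(\sigma)}\otimes\pi_{\sigma,-i\lambda}(\alpha)$; hence $\boldsymbol{\pi}_{\Gamma,\sigma,\lambda}(\alpha)$ is conjugate by $\mathbf{I}_\sigma$ to $\Id\otimes\pi_{\sigma,-i\lambda}(\alpha)$.

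Then I would take the trace. By conjugation-invariance of the trace, $\Tr(\boldsymbol{\pi}_{\Gamma,\sigma,\lambda}(\alpha)\,\mathbf{C}(\sigma:\lambda)^{-1}\frac{d}{d\lambda}\mathbf{C}(\sigma:\lambda))$ equals the trace on $\boldsymbol{\mathcal{L}}(\sigma)\otimes\mathcal{H}^\sigma$ of the product of $\Id\otimes\pi_{\sigma,-i\lambda}(\alpha)$ with the operator displayed above. Because $\alpha$ is $K$-finite, $\pi_{\sigma,-i\lambda}(\alpha)$ is supported on finitely many $K$-types and is of trace class, while $J_{P_0}^{-1}\frac{d}{dz}J_{P_0}$ acts blockwise on $K$-types, so all operators involved are trace class and the identity $\Tr(A\otimes B)=\Tr(A)\Tr(B)$ applies. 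The first summand gives $\Tr(\mathbf{T}(\sigma,\lambda)^{-1}\frac{d}{d\lambda}\mathbf{T}(\sigma,\lambda))\,\Theta_{\sigma,-i\lambda}(\alpha)$, using $\Tr_{\mathcal{H}^\sigma}\pi_{\sigma,-i\lambda}(\alpha)=\Theta_{\sigma,-i\lambda}(\alpha)$ by definition of the global character. For the second summand one uses $\Tr_{\boldsymbol{\mathcal{L}}(\sigma)}\Id=\dim\boldsymbol{\mathcal{L}}(\sigma)=\sum_{P\in\mathfrak{P}}\dim\Hom_M(V_\sigma,L^2(M))=p\dim(\sigma)$, since $\dim\Hom_M(V_\sigma,L^2(M))$ is the degree of $\sigma$, together with the fact that $J_{P_0}(\sigma,\mu)=A(w_0)J_{\bar{P}_0|P_0}(\sigma,\mu)$ with $A(w_0)$ independent of $\mu$, so that $J_{P_0}(\sigma,\mu)^{-1}\frac{d}{dz}J_{P_0}(\sigma,\mu)=J_{\bar{P}_0|P_0}(\sigma,\mu)^{-1}\frac{d}{dz}J_{\bar{P}_0|P_0}(\sigma,\mu)$. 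Assembling these pieces yields exactly the asserted identity.

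The argument is essentially formal, so I do not expect a genuine conceptual obstacle; the one place requiring care is the consistent bookkeeping of the two spectral parameters $\lambda$ and $-i\lambda$, and of which operator lives on which tensor factor. In particular, the chain rule applied to the $-i\lambda$-dependence of $J_{P_0}$ is precisely what produces the coefficient $-i$ in front of the Knapp--Stein term, and one must take care not to confuse $\frac{d}{d\lambda}$ with $\frac{d}{dz}$ in the final formula.
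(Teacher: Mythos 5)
Your proof is correct and follows the same route as the paper: conjugating by $\mathbf{I}_\sigma$, $\mathbf{I}_{w_0\sigma}$ via Proposition \ref{Faktorisierung C-Term} and the intertwining property, using $\Tr(A\otimes B)=\Tr(A)\Tr(B)$ with $\dim\boldsymbol{\mathcal{L}}(\sigma)=p\dim(\sigma)$, and reducing $J_{P_0}$ to $J_{\bar{P}_0|P_0}$ via \eqref{DefJ}. Your careful tracking of the chain-rule factor $-i$ from the reparametrization $z=-i\lambda$ correctly accounts for the coefficient in front of the Knapp--Stein term, which the paper leaves implicit.
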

\begin{proof}
We remark that since $\alpha$ is $K$-finite all traces are taken in
finite-dimensional vector spaces. One has
$\dim\left(\boldsymbol{\mathcal{L}}(\sigma)\right)=p\dim(\sigma)$ and by
\eqref{DefJ} one has
\begin{align}\label{J und A}
J_{P_{0}}(\sigma,-i\lambda)^{-1}\frac{d}{d\lambda}J_{P_{0}}(\sigma,-i\lambda)=J_
{\bar{P}_{
0}|P_{0}}(\sigma,-i\lambda)^{-1}\frac{d}{d\lambda}J_{\bar{P}_{0}|P_{0}}(\sigma,
-i\lambda).
\end{align}
Thus the corollary follows from  Proposition \ref{Faktorisierung C-Term}
and the intertwining property of
$\mathbf{I}_{\sigma}$, $\mathbf{I}_{w_0\sigma}$. 
\end{proof}
We shall now determine the logarithmic derivative of the function $\mathbf
{T}(\sigma,\lambda)$. Let $\nu\in\hat{K}$ be a $K$-type of
$\pi_{\sigma,\lambda}$.
Then by section \ref{subsecprs},  $\nu$ occurs with mutliplicity 1 in
$\pi_{\sigma,\lambda}$.
Hence it follows from Schur's Lemma
that
\begin{align}\label{cFnktn}
J_{\bar{P}_{0}|P_{0}}(\sigma,\lambda)|_{(\mathcal{H}^{\sigma})^{\nu}}=c_{\nu}
(\sigma:\lambda)\cdot \Id,
\end{align}
where $c_{\nu}(\sigma:\lambda)\in\C$. 
The function $\lambda\mapsto
c_{\nu}(\sigma:\lambda)$ can be computed explicitly. If
$k_{2}(\nu)e_{2}+\dots+k_{n+1}(\nu)e_{n+1}$ and
$k_{2}(\sigma)e_{2}+\dots+k_{n+1}(\sigma)e_{n+1}$ are  the highest
weight of $\nu$ resp. $\sigma$, by Theorem 8.2 in \cite{Eguchi} one has, taking
the different
parametrization into account:
\begin{align}\label{ErsteGrundglc}
c_{\nu}(\sigma:\lambda)=\alpha(n)\frac{\prod_{j=2}^{n+1}\Gamma(i\lambda-k_{j}
(\sigma
)-\rho_{j})\prod_{j=2}^{n+1}\Gamma(i\lambda+k_{j}(\sigma)+\rho_{j})}{
\prod_{j=2}^{n+1}\Gamma(i\lambda-k_{j}(\nu)-\rho_{j})\prod_{j=2}^{n+1}
\Gamma(i\lambda+k_{j}(\nu)+\rho_{j}+1)},
\end{align}
where $\alpha(n)$ is a constant depending only on $n$. 
Since $\dim\left(\boldsymbol{\mathcal{L}}(\sigma)\right)=p\dim(\sigma)$, 
it follows with
Proposition \ref{Faktorisierung C-Term}, \eqref{J und A} and
\eqref{cFnktn} that for every
$\nu\in\hat{K}$ with $\left[\nu:\sigma\right]\neq 0$ one has  
\begin{align}\label{Spezielle Form S}
\Tr\left(\mathbf{T}(\sigma,i\lambda)^{-1}\frac{d}{dz}\mathbf{T}(\sigma,
i\lambda)\right)=&\frac{1}{\dim(\nu)}\Tr\left(\mathbf{C}(\nu:
\sigma:i\lambda)^{-1}\frac{d}{dz}\mathbf{C}(\nu:\sigma
:i\lambda)\right)\nonumber \\
&+ip\dim(\sigma)c(\nu:\sigma:\lambda)^{-1}\frac{d}{d
\lambda}c(\nu:\sigma:\lambda).
\end{align}
Now for $\sigma\in\hat{M}$ with highest weight
$k_2(\sigma)e_2+\dots+k_{n+1}(\sigma)e_{n+1}$ we let $\nu_\sigma\in\hat{K}$ be
the
representation of $K$
with highest weight $k_2(\sigma)e_2+\dots+\left|k_{n+1}(\sigma)\right|e_{n+1}$.
Then by \cite[Theorem 8.1.4]{Goodman} we have
$\left[\nu_\sigma:\sigma\right]=1$ and thus
using \eqref{ErsteGrundglc} and \eqref{Spezielle Form S} we get
\begin{align}\label{Sc}
\Tr\left(\mathbf{T}(\sigma,i\lambda)^{-1}\frac{d}{dz}\mathbf{T}(\sigma,
i\lambda)\right)=&\frac{1}{\dim(\nu)}\Tr\left(\mathbf{C}
(\nu_\sigma:
\sigma:i\lambda)^{-1}\frac{d}{dz}\mathbf{C}(\nu_\sigma:\sigma
:i\lambda)\right)\nonumber \nonumber\\
&+\sum_{j=2}^{n+1}\frac{p\dim(\sigma)}{i\lambda+\left|k_j(\sigma)\right|+\rho_j}
.
\end{align}
Finally we recall the factorization of the determinant of the $C$-matrix into an
infinite product 
involving its zeroes and poles. 
Let $\sigma\in\hat{M}$ and $\nu\in\hat{K}$ with $\left[\nu:\sigma\right]\neq 0$.
The restrictions of the representations $\pi_{\sigma,\lambda}$ and
$\pi_{w_0\sigma,-\lambda}$ to $K$ are
independent of the parameter $\lambda$ and are unitarily equivalent via the map
$A(w_0):\mathcal{H}^{\sigma}\rightarrow\mathcal{H}^{w_0\sigma}$.  
If we tensor $A(w_0)^{-1}$ with an isometry
$I'(\sigma):\boldsymbol{\mathcal{L}}(w_0\sigma)\rightarrow\boldsymbol{\mathcal{L
}}
(\sigma)$ and use the isomorphisms $\mathbf{I}_\sigma$ and
$\mathbf{I}_{w_0\sigma}$, we obtain an isometry
$I(\sigma)\colon\boldsymbol{\mathcal{E}}(w_0\sigma)\to\boldsymbol{\mathcal{E}}
(\sigma)$ which
maps $\boldsymbol{\mathcal{E}}(w_0\sigma,\nu)$ to
$\boldsymbol{\mathcal{E}}(\sigma,\nu)$ for every $\nu\in\hat{K}$. Moreover by
\eqref{DefJ} and Proposition \ref{Faktorisierung C-Term} for all
$u\in\boldsymbol{\mathcal{L}}(\sigma)$ and all
$\Phi\in(\mathcal{H}^{\sigma})^{\nu}$
we have 
\begin{align*}
I(\sigma)\circ
\mathbf{C}(\nu:\sigma:\lambda)\circ\mathbf{I}_\sigma=\mathbf{I}
_\sigma\circ\left(\left(I'(\sigma)\circ\mathbf
{T}(\sigma,\lambda)\right)\otimes
J_{\bar{P}_0|P_{0}}(\nu,\sigma,-i\lambda)\right).
\end{align*}
Thus using \eqref{cFnktn} it follows that the multiplicity of each pole of
$\det{\left(I(\sigma)\circ\mathbf{C}(\nu:\sigma:\lambda)\right)}$ is
divisible by $\dim(\nu)$.
Let $\{\beta\}$ and $\{\eta\}$ denote the set
of poles of
$\det\left(I(\sigma)\circ\mathbf{C}(\nu:\sigma:\lambda)\right)$ on 
$\left(0,n\right]$ respectively $\{\lambda\in\C\colon \Real(\lambda)<0\}$,
counted with
multiplicity divided by $\dim(\nu)$. Then the set $\{\beta\}$ is finite
and by \cite[Theorem 6.9]{Muller zwei} one has
\begin{align}\label{SpC}
&\frac{1}{\dim(\nu)}\Tr\left(\mathbf{C}(\nu:\sigma:\lambda)^{-1}\frac{d}{
ds}\mathbf{C}(\nu:\sigma:\lambda)\right)\nonumber\\
=&\log{q(\sigma)}+\sum_{\{\beta\}}\left(\frac{
1}{\lambda+\beta}-\frac{1}{\lambda-\beta}\right)+\sum_{\eta}\left(\frac{1}{
\lambda+\overline{\eta}}-\frac{1}{\lambda-\eta}\right),
\end{align}
where $q(\sigma)\in\R^+$ and where the sum converges absolutely. 

\begin{bmrk}\label{RmrkC}
Let $\nu\in\hat{K}$ and $\sigma\in\hat{M}$ with $\left[\nu:\sigma\right]\neq 0$.
Then it follows
from \eqref{FEEs} that $\alpha$ is a pole of $\det{\left(I(\sigma)\circ
\mathbf{C}(\nu:\sigma:s) \right)}$ 
if and only if
$\overline{\alpha}$ is a pole of $\det{\left(I(w_0\sigma)\circ
\mathbf{C}(\nu:\sigma:s) \right)}$ and that the
corresponding orders are equal.
\end{bmrk}

\section{The invariant trace formula}\label{secinv}
\setcounter{equation}{0}
Let $\pi_{\Gamma}$ be the right-regular representation of $G$ on
$L^{2}(\Gamma\backslash G)$. Then there exists an orthogonal
decomposition 
\begin{align}\label{Zerlegung von L2}
L^{2}(\Gamma\backslash G)=L^{2}_{d}(\Gamma\backslash G)\oplus
L^{2}_{c}(\Gamma\backslash G)
\end{align}
of $L^{2}(\Gamma\backslash G)$ into closed $\pi_{\Gamma}$-invariant subspaces.
The restriction of $\pi_{\Gamma}$ to $L^{2}_{d}(\Gamma\backslash G)$ decomposes
into the orthogonal direct sum of irreducible unitary representations of $G$ and
the multiplicity of each irreducible unitary representation of $G$ in this
decomposition is finite. 
On the other hand, by the theory of Eisenstein series, the restriction of
$\pi_{\Gamma}$ to $L^{2}_{c}(\Gamma\backslash G)$ is isomorphic to the direct
integral over all unitary principle-series representations of $G$. These 
results are proved in \cite[sections 1-3]{Warner1}.\\
Now let $\alpha$ be a $K$-finite Schwarz function. Define
an operator
$\pi_{\Gamma}(\alpha)$ on $L^2(\Gamma\backslash G)$ by
\begin{align*}
\pi_{\Gamma}(\alpha)f(x):=\int_{G}\alpha(g)f(xg)dg.
\end{align*}
Then relative to the decompostition \eqref{Zerlegung von L2} one has a splitting
\begin{align*}
\pi_{\Gamma}(\alpha)=\pi_{\Gamma,d}(\alpha)\oplus\pi_{\Gamma,c}(\alpha).
\end{align*}
It easily follows from 
\cite[Theorem 9.1]{Donelly zwei}
that the operator
$\pi_{\Gamma,d}(\alpha)$ is of
trace class. In this section we recall the Selberg trace formula for
$\Tr\left(\pi_{\Gamma,d}(\alpha)\right)$. First we introduce the distributions
involved. Let $I(\alpha):=\vol(X)\alpha(1)$.
By \cite[Theorem 3]{Harish-Chandra2}, the Plancherel theorem can be applied to
$\alpha$. Four
groups of real rank one which do not possess a compact Cartan subgroup it is
stated in \cite[Theorem 13.2]{Knapp}.
Thus if $P_\sigma(z)$ is the Plancherel polynomial with respect to $\sigma$ as
in 
\cite[equation (2.21)]{MP}, one obtains
\begin{align}\label{Idcontr}
I(\alpha)=\vol(X)\sum_{\sigma\in\hat{M}}\int_{\mathbb{R}}{P_{\sigma}
(i\lambda)\Theta_{\sigma,\lambda}(\alpha)}
d\lambda,
\end{align}
where the sum is finite since $\alpha$ is $K$-finite. Next we define the
semisimple contribution by
\begin{align*}
H(\alpha):=\int_{\Gamma\backslash
G}\sum_{\gamma\in\Gamma_{\s}-1}
\alpha(x^{-1}\gamma x)dx.
\end{align*}
Here $\Gamma_{\s}$ are the semisimple elements of $\Gamma$. By \cite[Lemma
8.1]{Warner1} the integral converges absolutely. Its Fourier
transform can be computed as
follows. Let
$\CC(\Gamma)_{\s}$ be the set of semisimple conjugacy classes of
$\Gamma$. For
$\left[\gamma\right]\in\CC(\Gamma)_{\s}-\left[1\right]$ let
$m_\gamma\in M$ and $\ell(\gamma)\in\R^+$ be as in section \ref{secselz}.
Let
$a_\gamma:=\exp{\ell(\gamma)H_1}$. Moreover let $\gamma_0$ be as in section
\ref{secselz}.
Then one puts
\begin{align}\label{hyperbcontr}
L(\gamma,\sigma):=\frac{\overline{\Tr(\sigma)(m_{\gamma})}}
{\det\left(\Id-\Ad(m_\gamma a_\gamma)|_{\bar\nf}\right)}e^{-n\ell(\gamma)}.
\end{align}
Then proceeding as in \cite[Chapter 6]{Wallach} and using \cite[equation
4.6]{Gang} one
obtains
\begin{align}\label{Hyperb}
H(\alpha)=&\sum_{\sigma\in\hat{M}}\sum_{\left[\gamma\right]\in \CC(\Gamma)_{\s}
-\left[1\right]}\frac{l(\gamma_{0})}{2\pi}
L(\gamma,\sigma)\int_{-\infty}^{\infty}{\Theta_{\sigma,\lambda}(\alpha)e^{
-i\ell(\gamma)\lambda}d\lambda},
\end{align}
where the sum is finite since $\alpha$ is $K$-finite. \\
Next let $P\in\mathfrak{P}$ and for every $\eta\in\Gamma\cap N_{P}-\{1\}$ let
$X_{\eta}:=\log{\eta}$. Let
$\left\|\cdot\right\|$ be the norm induced on $\mathfrak{n}_{P}$ by the
restriction of $-\frac{1}{4n}B(\cdot,\theta\cdot)$ to $\mathfrak{n}_{P}$.
Then for $\Real(s)>0$ the Epstein zeta function $\zeta_{P}$ is defined by
$\zeta_{P}(s):=\sum_{\eta\in\Gamma\cap
N_{P}-\{1\}}{\left\|X_{\eta}\right\|}^{-2n(1+s)}$.
By \cite[Chapter 1.4, Theorem 1]{Terras}, this series converges absolutely for
$\Real(s)>0$ and $\zeta_T$ has a
meromorphic continuation to $\C$
with a simple pole at $s=0$.
Let
$R_{P}(\Gamma)$, $C_{P}(\Gamma)$ be the residue resp. the constant term of
$\zeta_{P}$ at $s=0$.
Then by \cite[Chapter 1.4, Theorem 1]{Terras} one has $
R_P(\Gamma)=\frac{\vol(S^{2n-1})}{2n\vol(\Gamma\cap N_P\backslash
N_P)}$.
Now for
$C(\Gamma):=\sum_{P\in\mathfrak{P}}C_{P}(\Gamma)\vol(\Gamma\cap
N_{P}\backslash N_{P})/\vol(S^{2n-1})$ let
\begin{align*}
T(\alpha):=C(\Gamma)\int_{K}\int_
{N}{\alpha(knk^{-1})dn};\quad
T_{P}'(\alpha):=\int_{K}{\int_{N_{P}}{\alpha(kn_{P}k^{
-1}){\log\left\|\log{n_{P
}}\right\|}dn_{P}}dk}.
\end{align*}
Then $T$ and $T_{P'}$ are tempered distributions.
The distributions $T$ is invariant. Applying the Fourier inversion formula and
the Peter-Weyl-Theorem to
equation 10.21 in \cite{Knapp}, one obtains 
the Fourier transform of T as:
\begin{align}\label{Fouriertrafo T}
T(\alpha)=\sum_{\sigma\in\hat{M}}\frac{\dim(\sigma)}{2\pi}C(\Gamma)\int_{
\mathbb{R}}\Theta_{\sigma,\lambda
}(\alpha)d\lambda,
\end{align}
see \cite[Lemma 6.3]{Wallach}.
The distributions $T_{P}'$ are not invariant. However, using the Knapp-Stein
intertwining operators, they can be made invariant as follows.
Let $\epsilon>0$ be
such that 0 is the only possible pole of the operators
$J_{\bar{P}_{0}|P_{0}}(\sigma,z)$, 
$J_{\bar{P}_{0}|P_{0}}(\sigma,z)^{-1}$, $\boldsymbol{T}(\sigma
,z)$, $\boldsymbol{T}(\sigma
,z)^{-1}$ on $\{z\in\C\colon\left|z\right|<2\epsilon\}$ for all
$\sigma\in\hat{M}$
which satisfy $\left[\nu:\sigma\right]\neq 0$, $\nu$ a $K$-type of $\alpha$.
Let $H_{\epsilon}$ be the
half-circle from $-\epsilon$ to $\epsilon$ in the lower half-plane, oriented
counter-clockwise. Let $D_{\epsilon}$ be the path which is the union of
$\left(-\infty,-\epsilon\right]$, $H_{\epsilon}$ and
$\left[\epsilon,\infty\right)$. Let
\begin{align*}
J_{\sigma}(\alpha):=-\frac{ip\dim\sigma}{4\pi
}\int_{D_{\epsilon}}{\Tr\left(J_{\bar{P}_{0}|P_{0}}(\sigma,z)^{-1}\frac{d}{
dz}J_{\bar{P}_{0}|P_{0}}(\sigma,z)\pi_{\sigma,z}(\alpha
)\right)dz}.
\end{align*}
The change of contour is only neccessary if
$J_{\bar{P}_{0}|P_{0}}(\sigma,s)$ has a pole at $0$, i.e. if
$\sigma=w_{0}\sigma$.
Now  we
define a distribution $\mathcal{I}$ by
\begin{align*}
\mathcal{I}(\alpha):=\sum_{P\in\mathfrak{P}}T_{P}'(\alpha)+\sum_{\sigma\in\hat{M
}}J_{\sigma}(\alpha).
\end{align*}
Then by \cite{Hoffmann}, $\mathcal{I}$ is an invariant distribution, see
\cite{MP}. Let
\begin{align}\label{definvS}
\mathcal{S}(\alpha):=\frac{1}{4\pi}\sum_{\sigma\in\hat{M}}\int_{
D_\epsilon}\Tr\left(\boldsymbol{
T}(\sigma,iz)^{-1}\frac{d}{ds}\boldsymbol{T}(\sigma
,iz)\right)\Theta_{\sigma,z}(\alpha)dz,
\end{align}
where $\boldsymbol{
T}(\sigma,iz)$ is as in Proposition \ref{Faktorisierung C-Term}. Then the sum is
finite since $\alpha$ is $K$-finite. By Corollary \ref{Korollar
fur Streuterm} we then have
\begin{align}\label{Gleichung c-Term}
\sum_{\sigma\in\hat{M}}\frac{1}{4\pi}\int_{\R}\Tr\left(\boldsymbol{\pi}_{\Gamma,
\sigma,i\lambda}
(\alpha)\mathbf{C}
(\sigma:i\lambda)^{-1}\frac{d
}
{dz}\mathbf{C}
(\sigma:i\lambda)\right)d\lambda
=\mathcal{S}(\alpha)-J(\alpha). 
\end{align}
Finally, the residual contribution is define by
\begin{align}\label{Def R-Term}
R(\alpha):=\sum_{\substack{\sigma\in\hat{M}\\ \sigma=
w_0\sigma}}-\frac{1}{4}\Tr\left(\mathbf{C}
(\sigma:0)\boldsymbol{\pi}_{\Gamma,\sigma,
0}(\alpha)\right).
\end{align}
This sum is finite since $\alpha$ is $K$-finite.
Using normalized intertwining operators, the Fourier transform of
$R$ can be computed as follows. We use the
notations of section
\ref{secC}. For $\sigma\in\hat{M}$, $\sigma=w_0\sigma$ let
$J_{\bar{P}_{0}|P_{0}}(\sigma,\lambda)$ be as in
\eqref{IntIO}. Then
$J_{\bar{P}_{0}|P_{0}}(\sigma,\lambda)$ might
have a pole at $\lambda=0$. However, if the meromorphic function
$r_{\bar{P}_{0}|P_{0}}(\sigma:\lambda)$ is defined as in \cite[page
113-114]{Hoffmann2},
the map
$R_{P_0}(\sigma:\lambda):=A(w_0)r_{\bar{P}_{0}|P_{0}}(\sigma:\lambda)^{-1}J_{
\bar
{P}_{0}|P_{0}}(\sigma:\lambda)$
is defined and invertible for $\lambda\in\R$, and it satisfies
$R_{P_{0}}(\sigma:0)^{-1}=R_{P_{0}}(\sigma:0)$.
By \cite[Proposition 49, Proposition 53]{Knapp Stein} the representation
$\pi_{\sigma,0}$ is irreducible. Moreover, $R_{P_0}(\sigma:0)$ satisfies
$R_{P_0}(\sigma:0)\circ\pi_{\sigma,0}=\pi_{\sigma,0}\circ
R_{P_0}(\sigma:0)$. Thus by \cite[Corollary 8.13]{Knapp},
$R_{P_0}(\sigma:0)$ is a scalar operator. Thus one has
$\left(R_{P_0}(\sigma:0)\right)^2=\pm\Id$. Let
$\mathbf{S}(\sigma:s):=r_{\bar{P}_{0}|P_{0}}(\sigma:s)\mathbf{T}
(\sigma:s)$, where $\mathbf{T}(\sigma:s)$ is as in Proposition
\ref{Faktorisierung
C-Term}. Then $\mathbf{S}(\sigma:s)$ is a meromorphic
$\Hom(\boldsymbol{\mathcal{L}}(\sigma),\boldsymbol{\mathcal{L}}
(\sigma))$-valued function
and since $\mathbf{C}(\sigma:0)$ is defined and invertible,  it follows
from Proposition \ref{Faktorisierung C-Term} that $\mathbf{S}(\sigma:s)$ is
defined  at $s=0$ and that
$\mathbf{I}_{\sigma}^{-1}\mathbf{C}(\sigma:0)\mathbf{I}_{\sigma}
=\mathbf{S}(\sigma:0)\otimes R_{P_0}(\sigma:0)$, where $\mathbf{I}_{\sigma}$ is
as in section \ref{secC}.
Using the functional equation \eqref{FEEs} one obtains
$\mathbf{S}(\sigma:0)^*=\mathbf{S}(\sigma:0)$ and $
\mathbf{S}(\sigma:0)^{-1}=\mathbf{S}(\sigma:0)$.
Hence $\mathbf{S}(\sigma:0)$ is diagonalizable with eigenvalues $\pm 1$. Using 
the intertwining property of $\boldsymbol{I}_\sigma$, it follows that there
exist natural numbers $c_1(\sigma), c_2(\sigma)$
with
$c_1(\sigma)+c_2(\sigma)=p\dim(\sigma)$ such that one has
\begin{align}\label{Resfrml}
-\frac{1}{4}\Tr\left(\mathbf{C}
(\sigma:0)\boldsymbol{\pi}_{\Gamma,\sigma,
0}(\alpha)\right)=\frac{c_1(\sigma)-c_2(\sigma)}{4}\Theta_{\sigma,0}(\alpha)
\end{align}
for every $K$-finite Schwarz function $\alpha$.  
We can now state the invariant trace formula.
\begin{thrm}\label{Spurf}
Let $\alpha$ be a $K$-finite Schwarz function. 
Then one has
\begin{align*}
\Tr\left(\pi_{\Gamma,d}(\alpha)\right)=I(\alpha)+H(\alpha
)+T(\alpha)+\mathcal{I}(\alpha)+R(\alpha)+\mathcal{S}(\alpha).
\end{align*}
\end{thrm}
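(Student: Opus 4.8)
The plan is to establish the invariant trace formula of Theorem~\ref{Spurf} by first invoking the general invariant trace formula of Hoffmann \cite{Hoffmann2} for the rank-one lattice $\Gamma \subset G$ applied to the $K$-finite Schwarz function $\alpha$, and then identifying each term in Hoffmann's formula with one of the distributions $I(\alpha)$, $H(\alpha)$, $T(\alpha)$, $\mathcal{I}(\alpha)$, $R(\alpha)$, $\mathcal{S}(\alpha)$ defined above. Recall that the left-hand side $\Tr(\pi_{\Gamma,d}(\alpha))$ makes sense because $\pi_{\Gamma,d}(\alpha)$ is of trace class by \cite[Theorem 9.1]{Donelly zwei}, and the geometric side of Hoffmann's formula decomposes the trace according to the conjugacy classes of $\Gamma$. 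First I would recall that since $\Gamma$ is torsion-free and satisfies \eqref{a1}, the results quoted in section~\ref{secselz} give the decomposition $\CC(\Gamma) = \CC(\Gamma)_{\s} \cup \CC(\Gamma)_{\parab}$ with intersection $[1]$; this is precisely the dichotomy (identity / semisimple non-identity / parabolic) that organizes Hoffmann's geometric side, so the geometric side splits as the identity contribution, the semisimple contribution, and the unipotent (cuspidal) contributions.

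Next I would match terms one at a time. The identity contribution of Hoffmann's formula is $\vol(X)\alpha(1)$, whose Fourier transform is computed by the Plancherel theorem of Harish-Chandra \cite[Theorem 3]{Harish-Chandra2}, \cite[Theorem 13.2]{Knapp}; this gives \eqref{Idcontr}, i.e.\ $I(\alpha)$. The semisimple non-identity contribution is an orbital-integral sum over $\CC(\Gamma)_{\s}-[1]$; using the computation of the orbital integrals as in \cite[Chapter 6]{Wallach} together with \cite[equation 4.6]{Gang}, one arrives at \eqref{Hyperb}, i.e.\ $H(\alpha)$. The unipotent contribution attached to each cusp $P\in\mathfrak{P}$ splits, after regularization of the divergent integral $\int_K\int_{N_P}\alpha(kn_Pk^{-1})\,dn_P\,dk$ via the Epstein zeta function $\zeta_P$, into the invariant piece $T(\alpha)$ (coming from the residue $R_P(\Gamma)$, via \cite[Lemma 6.3]{Wallach}) and the non-invariant pieces $T_P'(\alpha)$ (coming from the constant term, the $\log\|\log n_P\|$ weight). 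The spectral side of Hoffmann's formula contributes $\Tr(\pi_{\Gamma,d}(\alpha))$ on the left together with the explicit continuous-spectrum terms: the weighted-character terms $J_\sigma(\alpha)$ built from the logarithmic derivative of $J_{\bar P_0|P_0}(\sigma,z)$, the scattering term $\frac14\sum_\sigma\int_{\R}\Tr(\boldsymbol\pi_{\Gamma,\sigma,i\lambda}(\alpha)\mathbf{C}(\sigma:i\lambda)^{-1}\tfrac{d}{dz}\mathbf{C}(\sigma:i\lambda))\,d\lambda$, and the residual term $R(\alpha)$ at $\lambda=0$ from \eqref{Def R-Term}. Moving the non-invariant parabolic pieces to the spectral side and combining $\sum_P T_P'(\alpha)+\sum_\sigma J_\sigma(\alpha)=\mathcal{I}(\alpha)$ uses precisely the invariance statement of Hoffmann \cite{Hoffmann} (cf.\ \cite{MP}): the failure of $T_P'$ and $J_\sigma$ to be invariant cancels, so their sum is the invariant distribution $\mathcal{I}$.

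Then I would handle the scattering term: by Corollary~\ref{Korollar fur Streuterm} (itself a consequence of Proposition~\ref{Faktorisierung C-Term}), the integral of $\Tr(\boldsymbol\pi_{\Gamma,\sigma,i\lambda}(\alpha)\mathbf{C}(\sigma:i\lambda)^{-1}\tfrac{d}{dz}\mathbf{C}(\sigma:i\lambda))$ over $\R$ equals $\mathcal{S}(\alpha)-J(\alpha)$ as recorded in \eqref{Gleichung c-Term}, where $\mathcal{S}$ is the contour integral \eqref{definvS} over $D_\epsilon$ of the logarithmic derivative of $\boldsymbol{T}(\sigma,iz)$ against $\Theta_{\sigma,z}(\alpha)$, and the $J(\alpha)$ term is absorbed into $\mathcal{I}(\alpha)$. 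Putting all the identifications together yields $\Tr(\pi_{\Gamma,d}(\alpha)) = I(\alpha)+H(\alpha)+T(\alpha)+\mathcal{I}(\alpha)+R(\alpha)+\mathcal{S}(\alpha)$.

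The main obstacle is the bookkeeping around the continuous spectrum and the contour $D_\epsilon$: one must be careful that the change of contour near $0$ — needed exactly when $\sigma=w_0\sigma$, so that $J_{\bar P_0|P_0}(\sigma,z)$ has a pole at $0$ — is done consistently in the definitions of $J_\sigma(\alpha)$, $\mathcal{S}(\alpha)$ and in the application of Corollary~\ref{Korollar fur Streuterm}, and that the residue picked up at $z=0$ by deforming $\R$ to $D_\epsilon$ is exactly the residual term $R(\alpha)$, whose Fourier transform is given by \eqref{Resfrml}. The other delicate point is verifying that the regularization via $\zeta_P$ (splitting into residue $R_P(\Gamma)$ and constant term $C_P(\Gamma)$) reproduces precisely Hoffmann's unipotent term with the normalizations \cite[Chapter 1.4, Theorem 1]{Terras}; here the identity $R_P(\Gamma) = \vol(S^{2n-1})/(2n\vol(\Gamma\cap N_P\backslash N_P))$ and the definition of $C(\Gamma)$ as the weighted sum of the $C_P(\Gamma)$ must be matched against the volume factors in \cite{Hoffmann2}. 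Once the normalizations of the intertwining operators, the Plancherel measure $P_\sigma$, and the cusp volumes are aligned, the theorem follows from Hoffmann's formula by collecting like terms; no genuinely new analytic input beyond \cite{Hoffmann2}, \cite{Hoffmann} is required.
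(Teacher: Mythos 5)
Your proposal is correct and follows essentially the same route as the paper: the paper's proof likewise reduces the statement to Hoffmann's invariant trace formula \cite[Theorem 6.4]{Hoffmann2}, combined with the classical (non-invariant) trace formula of Warner and Osborne--Warner, the explicit form of $R_P(\Gamma)$, and the identity \eqref{Gleichung c-Term} to convert the scattering term into $\mathcal{S}(\alpha)-J(\alpha)$ and absorb $J$ into $\mathcal{I}$. Your term-by-term matching and the attention to normalizations are exactly the content the paper compresses into its short citation-based proof.
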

\begin{proof}
This theorem is a special case of the invariant trace formula stated in
\cite[Theorem 6.4]{Hoffmann2}. It follows if one combines \cite[Theorem
8.4]{Warner1},
the
Theorem on
page 299 in \cite{Osborne}, the explicit form of $R_P(\Gamma)$ and
\eqref{Gleichung c-Term}. Here one has to take into account that
our normalizations are different from those of \cite{Osborne}.
\end{proof}
The Fourier transform of the distribution $\mathcal{I}$ was computed in
\cite{Hoffmann}. We shall now state his result.
Let $S(\mathfrak{b}_{\C})$ be the symmetric algebra of $\mathfrak{b}_{\C}$. 
Define $\Pi\in S(\mathfrak{b}_{\C})$ by
$\Pi:=\prod_{\alpha\in\Delta^{+}(\mathfrak{m}_{\C},\mathfrak{b}_{\C})}
H_{\alpha}$.
For $\sigma\in\hat{M}$ with highest weight
$k_{2}(\sigma)e_{2}+\dots+k_{n+1}(\sigma)e_{n+1}$ and
$\lambda\in\mathbb{R}$ define
$\lambda_{\sigma}\in(\mathfrak{h})_{\C}^{*}$ by 
$\lambda_{\sigma}:=i\lambda
e_{1}+\sum_{j=2}^{n+1}(k_{j}(\sigma)+\rho_{j})e_{j}$.
We will denote by $\left<\cdot,\cdot\right>$ the symmetric
bilinear form on $\mathfrak{h}_{\C}^{*}$ induced by the Killing form.
Then for
$\alpha\in\Delta^{+}(\mathfrak{g}_{\C},\mathfrak{h}_{\C})$
we denote by
$s_{\alpha}:\mathfrak{h}_{\C}^{*}\rightarrow \mathfrak{h}_{
\C}^{*}$ the reflection
$s_{\alpha}(x)=x-2\frac{\left<x,\alpha\right>}{\left<\alpha,\alpha\right>}
\alpha$. 
Now the Fourier transform of $\mathcal{I}$ is computed as follows. 
\begin{thrm}\label{Hoffmanns Theorem}
For every $K$-finite $\alpha\in \mathcal{C}^{2}(G)$ one has
\begin{align*}
\mathcal{I}(\alpha)=\frac{p}{4\pi}\sum_{\sigma\in\hat{M}}\int_{\mathbb{R
}}{\Omega(\check{\sigma},-\lambda)\Theta_{\sigma,\lambda}(\alpha)d\lambda},
\end{align*}
where the function $\Omega(\sigma,\lambda)$ is given by
\begin{align*}
\Omega(\sigma,\lambda):=-2\dim(\sigma)\gamma-\frac{1}{2}\sum_{
\alpha\in\Delta^{+}(\mathfrak{g}_\C,\mathfrak{a}_\C)}\frac{\Pi(s_{\alpha}
\lambda_{\sigma})
}{\Pi(\rho_{M})}\left(\psi(1+\lambda_{\sigma}(H_{\alpha}))+\psi(1-\lambda_{
\sigma}(H_{\alpha})\right).
\end{align*}
Here $\psi$ denotes the digamma function and $\gamma$ denotes the
Euler-Mascheroni constant. Moreover $\check{\sigma}$ denotes the contragredient
representation of $\sigma$.
\end{thrm}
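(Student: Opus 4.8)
The plan is to reduce the statement to the explicit Fourier computation carried out by Hoffmann in \cite{Hoffmann}; the actual work consists in matching his normalizations and his parametrization of $\hat{M}$ with the conventions fixed in Section \ref{secpr}, so that $\Omega(\check\sigma,-\lambda)$ comes out in the form displayed above. Structurally, $\mathcal{I}(\alpha)=\sum_{P\in\mathfrak{P}}T_P'(\alpha)+\sum_{\sigma\in\hat{M}}J_\sigma(\alpha)$, and one evaluates the Fourier transform of the two groups of terms separately and then adds them, the point being that individually non-invariant and singular pieces cancel.

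First I would handle the weighted unipotent orbital integrals. Writing $N_P\cong\mathfrak{n}_P\cong\R^{2n}$ via $\exp$ and putting $f_\alpha(X):=\int_K\alpha(k\exp(X)k^{-1})\,dk$, one has $T_P'(\alpha)=\int_{\mathfrak{n}_P}f_\alpha(X)\log\|X\|\,dX$. Since $\log\|X\|$ is the finite part at $s=0$ of the family $\|X\|^{2ns}$, this integral is the constant term at $s=0$ of $\int f_\alpha(X)\|X\|^{2ns}\,dX$; passing to polar coordinates and using the same identification between the Euclidean Fourier transform on $\mathfrak{n}_P$ and the principal series characters that underlies the unweighted formula \eqref{Fouriertrafo T}, one expresses $T_P'(\alpha)$ as an integral over $\lambda\in\R$ against $\Theta_{\sigma,\lambda}(\alpha)$ with a kernel built from logarithmic derivatives of $\Gamma$-functions. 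The pole structure in $s$ produces precisely the digamma terms $\psi(1\pm\lambda_\sigma(H_\alpha))$, while the radial/Weyl-denominator manipulation produces the Plancherel-type factors $\Pi(s_\alpha\lambda_\sigma)/\Pi(\rho_M)$ indexed by $\alpha\in\Delta^{+}(\mathfrak{g}_\C,\mathfrak{a}_\C)$; summing over the $p$ cusps and folding in $C(\Gamma)$ gives the overall prefactor $p/4\pi$.

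Next I would compute the Fourier transform of $J_\sigma(\alpha)$. By \eqref{cFnktn} the operator $J_{\bar P_0|P_0}(\sigma,z)$ acts on the $\nu$-isotypic subspace as the scalar $c_\nu(\sigma:z)$, so the integrand of $J_\sigma$ equals $\sum_\nu\frac{d}{dz}\log c_\nu(\sigma:z)\,\Tr(\pi_{\sigma,z}(\alpha)|_\nu)$. Differentiating the explicit formula \eqref{ErsteGrundglc} turns $\frac{d}{dz}\log c_\nu(\sigma:z)$ into a sum of digamma functions with arguments $iz$ shifted by $\pm(k_j(\sigma)+\rho_j)$ and by $\pm(k_j(\nu)+\rho_j)$; the $\sigma$-part is $\nu$-independent and recombines with $\sum_\nu\Tr(\pi_{\sigma,z}(\alpha)|_\nu)=\Theta_{\sigma,z}(\alpha)$, whereas the $\nu$-part consists of differences of digammas at integer- or half-integer-spaced arguments, hence rational functions of $z$, which cancel against the corresponding rational contribution extracted from the $T_P'$ computation. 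Deforming $D_\epsilon$ back to $\R$ — legitimate because in the sum $\sum_P T_P'+\sum_\sigma J_\sigma$ the possible pole of $J_{\bar P_0|P_0}(\sigma,z)$ at $z=0$ cancels, which is exactly the invariance of $\mathcal{I}$ — and using $\psi(1)=-\gamma$ to collect the remaining constants yields the term $-2\dim(\sigma)\gamma$; rewriting the sum with $\sigma$ replaced by $\check\sigma$ and $\lambda$ by $-\lambda$ is the final notational alignment with \cite{Hoffmann}.

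The main obstacle is bookkeeping rather than conceptual. One must (i) track the $\nu$-dependence precisely so that every non-digamma, rational piece produced by the intertwining term cancels against the analogous piece from the weighted orbital integral; (ii) handle the contour deformation and the pole of $J_{\bar P_0|P_0}(\sigma,z)$ at $z=0$ (present only when $\sigma=w_0\sigma$) so that no spurious residual term survives in the final integral over $\R$; and (iii) check that the density $\Pi(s_\alpha\lambda_\sigma)/\Pi(\rho_M)$ emerging from the polar-coordinate integral on $\mathfrak{n}_P$ coincides with the Weyl-denominator factor appearing in Hoffmann's argument. Once these points are verified, the theorem follows by assembling the two Fourier transforms and reading off $\Omega$.
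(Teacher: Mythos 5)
Your proposal is correct and follows the paper's route exactly: the theorem is obtained by quoting Hoffmann's explicit Fourier transform of the weighted unipotent orbital integrals and of the invariant-making intertwining term (Theorems 5 and 6 and the Corollary on page 96 of \cite{Hoffmann}) and matching his parametrization of $\hat{M}$ and his normalizations with those of Section \ref{secpr}, which is all the paper's proof consists of. One small correction to your sketch of Hoffmann's internals: the constant $C(\Gamma)$ plays no role in $\mathcal{I}$ --- it belongs to the separate distribution $T$ --- so the prefactor $p/(4\pi)$ arises only from the sum over the $p$ cusps and the Plancherel-type normalization, not from ``folding in $C(\Gamma)$''.
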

\begin{proof}
This follows from \cite[Theorem 5]{Hoffmann}, \cite[Theorem 6]{Hoffmann},
\cite[Corollary on page 96]{Hoffmann}.
\end{proof}
In order to show that the residues
of the logarithmic derivative of the symmetrized Selberg 
zeta function are integral, we shall now take a closer look at the
functions $\Omega(\sigma,\lambda)$. For $\sigma$ of highest
weight
$k_2(\sigma)e_2+\dots+k_{n+1}(\sigma)e_{n+1}$ and $j=2,\dots,n+1$ we let 
\begin{align*}
P_j(\sigma,\lambda)=\frac{\Pi(s_{e_{1}+e_{j}}\lambda_{\sigma})}{\Pi(\rho_{M}
)}.
\end{align*}
Then it was shown in \cite[equation 6.23 ]{MP} that
\begin{align}\label{GLP}
P_j(\sigma,\lambda)=\dim(\sigma)\prod_{\substack{p=2\\ p\neq
j}}^{n+1}\frac{-\lambda^2-(k_p(\sigma)+\rho_p)^2}{
(k_j(\sigma)+\rho_j)^2-(k_p(\sigma)-\rho_p)^2}.
\end{align}
Thus
$P_{j}(\sigma,\lambda)$ is an even polynomial in $\lambda$ of degree $2n-2$.
Now the following lemma holds.
\begin{lem}\label{Lemint}
Let $j=2,\dots,n+1$ and assume that all $k_{j}(\sigma)$ are integers
with $k_{n+1}(\sigma)\geq 0$. Let
$l\in\mathbb{N}$ with $k_{n+1}(\sigma)\leq l\leq k_{j}(\sigma)+\rho_{j}$.
Then $P_j(\sigma,il)$ is integral.
\end{lem}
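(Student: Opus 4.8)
The plan is to use the explicit product formula \eqref{GLP} for $P_j(\sigma,\lambda)$ and show directly that substituting $\lambda=il$ produces an integer. Write
\begin{align*}
P_j(\sigma,il)=\dim(\sigma)\prod_{\substack{p=2\\ p\neq j}}^{n+1}\frac{l^2-(k_p(\sigma)+\rho_p)^2}{(k_j(\sigma)+\rho_j)^2-(k_p(\sigma)-\rho_p)^2}.
\end{align*}
Since all $k_p(\sigma)$ and all $\rho_p=n+1-p$ are integers, both numerator and denominator of each factor are integers, so $P_j(\sigma,il)$ is at least rational; the point is to control the denominators. First I would recall the Weyl dimension formula for $\dim(\sigma)$ in terms of the highest weight $\Lambda(\sigma)=\sum_{p=2}^{n+1}k_p(\sigma)e_p$ and $\rho_M=\sum_{p=2}^{n+1}\rho_p e_p$: up to an explicit rational normalization coming from $\Pi(\rho_M)$, one has $\dim(\sigma)=\prod_{2\le p<q\le n+1}\frac{(k_p+\rho_p)^2-(k_q+\rho_q)^2}{(\rho_p)^2-(\rho_q)^2}$ for the $D_n$-type root system of $\mathfrak m$. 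Combining this with the product above, the quantity $P_j(\sigma,il)$ becomes a ratio of products of factors of the form $a^2-b^2=(a-b)(a+b)$ with integer $a,b$, and the plan is to exhibit an explicit bijection between the linear factors in the numerator and those in the denominator, using the hypothesis $k_{n+1}(\sigma)\le l\le k_j(\sigma)+\rho_j$ to guarantee that no denominator factor is "missed."

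The cleanest route is probably to reinterpret $P_j(\sigma,il)$ itself as a dimension. Set $\mu:=s_{e_1+e_j}\lambda_\sigma$, restricted appropriately; the definition $P_j(\sigma,\lambda)=\Pi(s_{e_1+e_j}\lambda_\sigma)/\Pi(\rho_M)$ is, by the Weyl dimension formula read backwards, precisely the dimension of the (virtual) $M$-representation, or rather $\Spin(2n)$-representation, whose highest weight has coordinates obtained from $(k_2(\sigma)+\rho_2,\dots,k_{n+1}(\sigma)+\rho_{n+1})$ by replacing the $j$-th entry $k_j(\sigma)+\rho_j$ with $i\lambda$ and then subtracting $\rho_M$ — provided the resulting weight is dominant. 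The key step is then to verify that when $\lambda=il$ with $k_{n+1}(\sigma)\le l\le k_j(\sigma)+\rho_j$, the tuple $(k_2+\rho_2,\dots,\widehat{k_j+\rho_j},\dots)$ with $l$ inserted in the $j$-th slot, after reordering, is a strictly-or-weakly decreasing sequence of the form $\nu_2+\rho_2\ge\cdots\ge\nu_{n+1}+\rho_{n+1}$ with $\nu_{n+1}\ge 0$, i.e. it is $\rho_M$ plus a dominant integral weight of $\Spin(2n)$ (one has to be slightly careful about the $D_n$ dominance condition on the last coordinate, which is where $l\ge k_{n+1}(\sigma)\ge 0$ enters, and about possible coincidences which make the virtual dimension zero — still an integer). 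Once it is identified with an honest dimension (or a signed dimension, after a Weyl-group sign), integrality is immediate.

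I expect the main obstacle to be the bookkeeping in this reordering argument: showing that the inequalities $k_{n+1}(\sigma)\le l\le k_j(\sigma)+\rho_j$ are exactly what is needed for the shifted tuple to land in the dominant Weyl chamber (up to the action of the Weyl group of $D_n$, which only changes signs and permutes, hence only affects $P_j$ by $\pm 1$), and in handling the boundary/degenerate cases where two shifted coordinates collide — there $\Pi(s_{e_1+e_j}\lambda_\sigma)$ vanishes, so the claim is trivial, but one must notice this rather than divide by zero. A secondary, more bureaucratic obstacle is tracking the different parametrization conventions (the paper's $k_j(\sigma)$ versus the conventions of \cite{Goodman} and \cite{MP}); I would fix conventions once, cite \cite[Theorem 8.1.4]{Goodman} for the $\Spin$ branching / dimension normalization, and then run the combinatorial argument. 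No heavy machinery beyond the Weyl dimension formula and \eqref{GLP} should be required.
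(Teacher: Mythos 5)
Your ``cleanest route'' is exactly the paper's argument: the paper shows that the restriction of $s_{e_1+e_j}\lambda_\sigma|_{\lambda=il}$ to $\bL_\C^*$ agrees, up to a permutation of the $e_i$ (hence up to a sign of $\Pi$), with $\Lambda(\sigma')+\rho_M$ for an explicitly constructed dominant integral weight $\Lambda(\sigma')$ of $M$, the inequalities $k_{n+1}(\sigma)\leq l\leq k_j(\sigma)+\rho_j$ being used precisely to arrange dominance, and the degenerate cases being exactly those where $P_j(\sigma,il)$ equals $0$ or $\pm\dim(\sigma)$. So the proposal is correct and takes essentially the same approach; the only work it leaves implicit is the combinatorial reordering, which the paper carries out via the minimal index $\nu$ with $l-\rho_\nu\geq k_{\nu+1}(\sigma)$.
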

\begin{proof}
Suppose first that $l=k_{n+1}(\sigma)$. If $j<n+1$, 
one has
$P_j(\sigma,il)=0$  and if $j=n+1$, one has
$P_j(\sigma,il)=\dim(\sigma)$ by \eqref{GLP}.\\
Now suppose that $l>k_{n+1}(\sigma)$. Then there exists a minimal
$\nu\in\{2,\dots,n\}$ such
that $
l-\rho_{\nu}\geq k_{\nu+1}(\sigma)$. We have $\nu\geq j$.
If $\nu>2$ we have
$l-\rho_{\nu-1}<k_{\nu}(\sigma)$. Moreover, if $\nu>2$ with
$l-\rho_{\nu-1}=k_{\nu}(\sigma)-1$ then
$l=k_{\nu}(\sigma)+\rho_{\nu}$ and so in this case we have 
$P_j(\sigma,il)=\dim(\sigma)$
for $\nu=j$ and
$P_j(\sigma,il)=0$
for $\nu>j$ by \eqref{GLP}. \\
Thus it remains to consider the case that either $\nu=2$ or that for $\nu>2$ one
has
$k_{\nu}(\sigma)-1\geq l-\rho_{\nu-1}+1=l-\rho_{\nu}$.
In this case we define $\Lambda^\prime\in\mathfrak{b}_{\C}^{*}$ by
\begin{align*}
\Lambda^\prime:&=\sum_{2\leq i<j}k_i(\sigma)e_i+\sum_{j<
i\leq\nu}\left(k_i(\sigma)-1\right)e_{i-1}+
(l-\rho_{\nu})e_{\nu}+
\sum_{\nu<i\leq n+1}k_i(\sigma)e_{i}.
\end{align*}
Then $\Lambda^\prime$
is the highest weight $\Lambda(\sigma^\prime)$ of a representation
$\sigma^\prime$ of $M$. By \cite[equation 6.19]{MP}, the restriction of 
$s_{e_1+e_j}\lambda_{\sigma}|_{\lambda=il}$ to $\bL^*_\C$ coincides with
$\Lambda(\sigma'+\rho_M)$ up to a permutation 
of the $e_2,\dots,e_n$. Let $\xi\in\bL_\C^*$,
$\xi=\xi_2e_2+\dots+\xi_{n+1}e_{n+1}$. Then by \cite[equation 6.18]{MP}, if
$\tau$ is any permutation of
$\{2,\dots,n+1\}$, for
$\xi_{\tau}:=\xi_{2}e_{\tau(2)}+\dots+\xi_{n+1}e_{\tau(n+1)}$, one has
$\Pi(\xi_{\tau})=\pm\Pi(\xi)$. Applying the Weyl dimension formula \cite[Theorem
7.19]{Goodman} one obtains
\begin{align*}
\dim(\sigma^\prime)=\frac{\Pi(\Lambda(\sigma')+\rho_{M})}{\Pi(\rho_M)}
=\pm P_j(\sigma,il).
\end{align*}
This proves the lemma. 
\end{proof}
We can now state our main result about the function $\Omega(\sigma,\lambda)$. 
\begin{prop}\label{Para}
Let $\sigma\in\hat{M}$ with highest weight
$k_{2}(\sigma)e_{2}+\dots+k_{n+1}(\sigma)e_{n+1}$. 
Assume that
all
$k_{j}(\sigma)$ are integers. Let
$m_{0}:=\left|k_{n+1}(\sigma)\right|$.  
Then there exists an even polynomial
$Q(\sigma,\lambda)$ of degree $\leq 2n-4$ and for every $j=2,\dots,n+1$, every
$l$
with $m_0\leq l< \left|k_{j}(\sigma)\right|+\rho_{j}$ there exist integers
$c_{j,l}(\sigma)$ such that
\begin{align*}
\Omega(\sigma,\lambda)=&-\dim(\sigma
)\left(2\gamma+\psi(1+i\lambda)+\psi(1-i\lambda)+\sum_{\substack{1\leq l<
m_0}}\frac{2l}{\lambda^2+l^2}\right)\\
&-\sum_{j=2}^{n+1}\sum_{\substack{m_0\leq
l<\\\left|k_{j}(\sigma)\right|+\rho_{j}}}c_{j,l}
(\sigma)\frac{2l}{\lambda^2+l^2}
-\sum_{j=2}^{n+1}\dim(\sigma)\frac{\left|k_{j}
(\sigma)\right|+\rho_{j}}{(\left|k_{j}
(\sigma)\right|+\rho_{j})^2+\lambda^2}-Q(\sigma,\lambda).
\end{align*}
For every $\sigma\in\hat{M}$ one has
$\Omega(\sigma,\lambda)=\Omega(w_0\sigma,\lambda)=\Omega(\check{\sigma},
\lambda)$ and $Q(\sigma,\lambda)=Q(w_0\sigma,\lambda)$. 
\end{prop}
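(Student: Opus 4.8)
The plan is to begin from the closed formula for $\Omega(\sigma,\lambda)$ in Theorem~\ref{Hoffmanns Theorem} and massage it into the asserted shape using the functional equation of the digamma function together with the interpolation identity encoded in \eqref{GLP}. Write $a_j:=\left|k_j(\sigma)\right|+\rho_j$ for $j=2,\dots,n+1$; then $a_2>a_3>\dots>a_{n+1}=m_0$, the $a_j^2$ are pairwise distinct, and $(k_p(\sigma)+\rho_p)^2=a_p^2$ for all $p$ since $k_p(\sigma)\geq 0$ for $p\leq n$ and $\rho_{n+1}=0$. The positive roots in $\Delta^+(\mathfrak{g}_\C,\mathfrak{a}_\C)$ are $e_1\pm e_j$, $j=2,\dots,n+1$, with $\lambda_\sigma(H_{e_1\pm e_j})=i\lambda\pm(k_j(\sigma)+\rho_j)$, and since $\Pi$ depends only on the squares of the $e$-coordinates (as in the proof of Lemma~\ref{Lemint}) one has $\Pi(s_{e_1-e_j}\lambda_\sigma)=\Pi(s_{e_1+e_j}\lambda_\sigma)=\Pi(\rho_M)\,P_j(\sigma,\lambda)$. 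Collecting the two roots $e_1\pm e_j$ in Hoffmann's formula therefore gives
\[
\Omega(\sigma,\lambda)=-2\dim(\sigma)\gamma-\tfrac12\sum_{j=2}^{n+1}P_j(\sigma,\lambda)\,\Psi_j(\lambda),
\]
with $\Psi_j(\lambda):=\psi(1+i\lambda-a_j)+\psi(1-i\lambda+a_j)+\psi(1+i\lambda+a_j)+\psi(1-i\lambda-a_j)$, which depends only on $\left|k_j(\sigma)\right|$.

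First I would simplify the digamma terms: applying $\psi(z+1)=\psi(z)+1/z$ repeatedly rewrites each $\psi(1\pm i\lambda\pm a_j)$ as $\psi(1\pm i\lambda)$ plus a finite sum of terms $1/(i\lambda+\text{integer})$, and upon adding the four contributions the simple poles at $\lambda=0$ cancel, leaving
\[
\Psi_j(\lambda)=2\psi(1+i\lambda)+2\psi(1-i\lambda)+\frac{2a_j}{\lambda^2+a_j^2}+\sum_{l=1}^{a_j-1}\frac{4l}{\lambda^2+l^2}.
\]
The decisive algebraic input is the identity $\sum_{j=2}^{n+1}P_j(\sigma,\lambda)=\dim(\sigma)$: by \eqref{GLP}, in the variable $x=-\lambda^2$ the functions $P_j(\sigma,\lambda)/\dim(\sigma)$ are exactly the Lagrange basis polynomials for the $n$ pairwise distinct nodes $a_2^2,\dots,a_{n+1}^2$, and the sum of the Lagrange basis polynomials is identically $1$. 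Hence the $\gamma$ and the $\psi(1\pm i\lambda)$ contributions to $\Omega$ collapse to $-\dim(\sigma)\bigl(2\gamma+\psi(1+i\lambda)+\psi(1-i\lambda)\bigr)$, so that
\[
\Omega(\sigma,\lambda)=-\dim(\sigma)\bigl(2\gamma+\psi(1+i\lambda)+\psi(1-i\lambda)\bigr)-F(\sigma,\lambda),\quad
F(\sigma,\lambda):=\tfrac12\sum_{j=2}^{n+1}P_j(\sigma,\lambda)\Bigl(\tfrac{2a_j}{\lambda^2+a_j^2}+\sum_{l=1}^{a_j-1}\tfrac{4l}{\lambda^2+l^2}\Bigr).
\]

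It then remains to determine the partial fraction expansion of the even rational function $F$. The key point is $P_j(\sigma,ia_{j_0})=\dim(\sigma)\,\delta_{jj_0}$, since for $j\neq j_0$ the $p=j_0$ factor in the product defining $P_j$ vanishes. Consequently the coefficient of $1/(\lambda^2+a_{j_0}^2)$ in $F$ is $\dim(\sigma)\,a_{j_0}$, giving the term $-\dim(\sigma)\sum_j a_j/(a_j^2+\lambda^2)$, while the coefficient of $1/(\lambda^2+l^2)$ at a pole $l$ that is not one of the $a_j$ is $2l\sum_{j:\,a_j>l}P_j(\sigma,il)$, which by the above identity equals $2l\dim(\sigma)$ when $l<m_0$ and, when $m_0\leq l<a_j$, is a sum of the integers $c_{j,l}(\sigma):=P_j(\sigma,il)$ provided by Lemma~\ref{Lemint}; a pole $l$ coinciding with some $a_{j_0}$ is handled by the same two facts. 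Since $\deg_\lambda P_j(\sigma,\lambda)=2n-2$ and each bracket is $O(\lambda^{-2})$, we get $F(\sigma,\lambda)=O(\lambda^{2n-4})$, so its polynomial part $Q(\sigma,\lambda)$ is an even polynomial of degree $\leq 2n-4$; assembling the pieces gives the claimed formula. The invariances are then immediate, since $\dim(\sigma)$, the $a_p^2$, the $P_j$ and $Q$ depend only on the $\left|k_p(\sigma)\right|$ while each $\Psi_j$ depends only on $\left|k_j(\sigma)\right|$, so $\Omega(\sigma,\lambda)=\Omega(w_0\sigma,\lambda)$ and $Q(\sigma,\lambda)=Q(w_0\sigma,\lambda)$; and for $M=\Spin(2n)$ the contragredient $\check\sigma$ is either $\sigma$ or $w_0\sigma$, so $\Omega(\check\sigma,\lambda)=\Omega(\sigma,\lambda)$ as well. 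I expect the main obstacle to be the bookkeeping in this last step, namely keeping track of which poles of $F$ collide and verifying that the residues split exactly into the three prescribed families, together with the verification that the digamma recursion genuinely cancels the pole at $\lambda=0$ and reproduces the precise numerators $2a_j$ and $4l$.
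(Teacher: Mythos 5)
Your proposal is correct, and it is worth noting how it relates to the paper's own proof: the paper disposes of Proposition~\ref{Para} in two sentences, setting $c_{j,l}(\sigma):=P_j(\sigma,il)$, invoking Lemma~\ref{Lemint} for integrality, and citing \cite[Proposition 6.4]{MP} for the entire analytic identity. You keep exactly the same definition of $c_{j,l}(\sigma)$ and the same appeal to Lemma~\ref{Lemint}, but you carry out in full the computation that the paper outsources, and your derivation checks out: the roots of $\Delta^+(\gL_\C,\aL_\C)$ are indeed $e_1\pm e_j$ with $\lambda_\sigma(H_{e_1\pm e_j})=i\lambda\pm(k_j(\sigma)+\rho_j)$, the two reflections $s_{e_1\pm e_j}$ give the same value of $\Pi$ because $\Pi(\xi)$ is (up to a constant) $\prod_{i<j'}(\xi_i^2-\xi_{j'}^2)$, the fourfold digamma recursion does produce $2\psi(1+i\lambda)+2\psi(1-i\lambda)+\frac{2a_j}{\lambda^2+a_j^2}+\sum_{l=1}^{a_j-1}\frac{4l}{\lambda^2+l^2}$ with the $l=0$ terms cancelling, and the Lagrange-basis identities $\sum_jP_j(\sigma,\lambda)=\dim(\sigma)$ and $P_j(\sigma,ia_{j_0})=\dim(\sigma)\delta_{jj_0}$ are precisely what sorts the residues into the three families of the statement and bounds $\deg Q$ by $2n-4$. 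One small caveat: equation \eqref{GLP} as printed has $(k_p(\sigma)-\rho_p)^2$ in the denominator, with which the $P_j/\dim(\sigma)$ would \emph{not} be the Lagrange basis at the nodes $a_p^2$; this is evidently a misprint for $(k_p(\sigma)+\rho_p)^2$ (as one sees by evaluating $\Pi(s_{e_1+e_j}\lambda_\sigma)/\Pi(\Lambda(\sigma)+\rho_M)$ directly, and as the proof of Lemma~\ref{Lemint} implicitly assumes), so your reading is the correct one, but you should say explicitly that you are using the corrected form rather than attributing the Lagrange structure to \eqref{GLP} as stated.
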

\begin{proof}
For $j=2,\dots,n+1$ and 
$l\in\N$ with $m_0\leq l< \left|k_{j}(\sigma)\right|+\rho_{j}$ we let
$c_{j,l}(\sigma):=P_j(\sigma,il)$.
Then $c_{j,l}(\sigma)$ is integral by Lemma \ref{Lemint}.
Thus if $|k_{n+1}(\sigma)|>0$, the proposition follows from \cite[Proposition
6.4]{MP}. If 
$k_{n+1}(\sigma)=0$, the proof is exactly the same.
\end{proof}
We remark that if all $k_j(\sigma)$ are half-integral, a statement analogous to 
Proposition \ref{Para} holds. For the proof one proceeds in the same way as
above.

\section{Bochner Laplace operators}\label{secBLO}
\setcounter{equation}{0}
Let $(\nu,V_\nu)\in\hat{K}$ and let $\tilde{E}_\nu:=G\times_\nu V_\nu$ be the
associated
homogeneous 
vector bundle over $\widetilde X$ equipped with the metric induced from $V_\nu$.
Then if one lets
\begin{align}\label{defsect}
C^{\infty}(G,\nu):=\{f:G\rightarrow V_{\nu}\colon f\in C^\infty,\;
f(gk)=\nu(k^{-1})f(g),\,\,\forall g\in G, \,\forall k\in K\},
\end{align}
the smooth sections of $\tilde{E}_\nu$ are canonically isomorphic to
$C^\infty(G,\nu)$. A 
corresponding isometry also holds for the $L^2$-sections of $\tilde{E}_\nu$ and
the  space $L^2(G,\nu)$, which is defined 
analogously to \eqref{defsect}. Let $\tilde{A}_\nu$ be the 
differential operator which is induced by $-\Omega$ on $C^\infty(G,\nu)$. Then
by the same argument as in \cite[section 4]{MP} $\tilde{A}_\nu$ with domain
$C_c^\infty(G,\nu)$, the compactly supported elements of $C^\infty(G,\nu)$, is
bounded from below and
essentially selfadjoint. Its selfadjoint closure will be denoted by
$\tilde{A}_\nu$ too. Let $e^{-t\tilde{A}_\nu}$ be the heat-semigroup of
$\tilde{A}_\nu$ on 
$L^2(G,\nu)$. Then there exists a function 
\begin{align}\label{DefH}
{H}^{\nu}_{t}:G\longrightarrow {\rm{End}}(V_{\nu});\quad
{H}^{\nu}_{t}(k^{-1}gk')=\nu(k)^{-1}\circ {H}^{\nu}_{t}(g)\circ\nu(k'),
\:\forall k,k'\in K, \forall g\in G
\end{align}
such that
\begin{align}\label{Eqheat}
(e^{-t\tilde{A}_{\nu}}\phi)(g)=\int_{G}{{H}^{\nu}_{t}(g^{-1}g')\phi(g')dg'}
,
\quad\phi\in  L^{2}(G,\nu),\quad g\in G,
\end{align}
see \cite[section 4]{MP}. By the  arguments of \cite[Proposition 2.4]{Barbasch},
 $H^\nu_t$ belongs to
all 
Harish-Chandra Schwarz spaces 
$(\mathcal{C}^{q}(G)\otimes {\rm{End}}(V_{\nu}))$, $q>0$. We put
$h^{\nu}_{t}(g):=\tr{H}^{\nu}_{t}(g)$, where $\tr$ denotes the trace in
$\End{V_\nu}$. 
\\
Now we pass to the quotient $X=\Gamma\backslash\widetilde{X}$. 
Let $E_\nu:=\Gamma\backslash \tilde{E}_\nu$ be the locally homogeneous bundle 
over $X$. Let $C_c^\infty(\Gamma\backslash G,\nu)$ resp. $L^2(\Gamma\backslash
G,\nu)$ denote the $\Gamma$-left invariant 
elements of $C_c^\infty(G,\nu)$ resp.
$L^2(G,\nu)$ .
Then $A_\nu$ with domain
$C_c^\infty(\Gamma\backslash G,\nu)$ is essentially selfadjoint and bounded from
below, see \cite[section 4]{MP}. Its 
selfadjoint closure will be denoted by $A_\nu$ too. The decomposition
\eqref{Zerlegung von L2} induces a decomposition of
$L^2(\Gamma\backslash G,\nu)\cong \left(L^2(\Gamma\backslash G,\nu)\otimes
V_\nu\right)^K$ as $
L^2(\Gamma\backslash
G,\nu)=L^2_d(\Gamma\backslash
G,\nu)\oplus L^2_c(\Gamma\backslash
G,\nu)$.
This decomposition is invariant under $A_\nu$ in the sense of unbounded
operators.
Let $A_\nu^d$ denote the restriction of $A_\nu$ to $L^2_d(\Gamma\backslash
G,\nu)$ in the sense of unbounded operators. Since $\pi_{\Gamma,d}$ decomposes
discretely into a direct sum of irreducible unitary representations of $G$,
there 
exists a subset $J\subset\N$, a sequence $\lambda_j$, $j\in J$ of real numbers 
and an orthonormal base $\phi_j$, $j\in J$ of $L^2_d(\Gamma\backslash G,\nu)$
such that
$A_\nu \phi_j=\lambda_j\phi_j$ for every $j\in J$. The set $J$ may be finite.
For $\lambda\in\left[0,\infty\right)$
let $N(\lambda):=\#\{j\in J\colon\lambda_{j}\leq\lambda\}$.
By \cite[Theorem 9.1]{Donelly zwei},
there exists a constant $C$ such that
\begin{align}\label{Wlaw}
N(\lambda)\leq C(1+\lambda)^{\frac{d}{2}}.
\end{align} 
for every $\lambda$. Hence the operator $e^{-tA_{\nu}^{d}}$ is of trace class
and one has
\begin{align}\label{summed}
\Tr\pi_{\Gamma,d}(h_t^\nu)=\Tr(e^{-tA_{\nu}^{d}})=\sum_{j}{e^{
-t\lambda_{j}}}.
\end{align}
Using 
the methods of \cite{Barbasch}, it was shown in \cite[section 4]{MP} that
\begin{align}\label{fouriertrf2}
\Theta_{\sigma,\lambda}(h_t^\nu)=e^{t(c(\sigma)-\lambda^{2})}\left[
\nu:\sigma\right].
\end{align}
for every $\sigma\in\hat{M}$. Here $c(\sigma)$ is as in section
\ref{subsecprs}.

\section{The symmetric Selberg zeta
function}\label{secsymzeta}
\setcounter{equation}{0}
For $s\in\C$, $\Real(s)>2n$ we define the symmetric Selberg zeta function
$S(s,\sigma)$ 
by $S(s,\sigma):=Z(s,\sigma)$ if $\sigma=w_0\sigma$ and by
by $S(s,\sigma):=Z(s,\sigma)+Z(s,w_0\sigma)$ if $\sigma\neq w_0$.
In this section we want to study the function $S(s,\sigma)$.\\
By Proposition \ref{branching} there exist unique integers 
$m_{\nu}(\sigma)\in\{-1,0,1\}$ which are zero except for finitely many
$\nu\in\hat{K}$ 
such that
\begin{align}\label{Def der m} 
\sum_{\nu\in\hat{K}}m_{\nu}(\sigma)\iota^{*}\nu=\sigma,\:\text{if
$\sigma=w_{0}\sigma$};
\quad \sum_{\nu\in\hat{K}}m_{\nu}(\sigma)\iota^{*}\nu=\sigma+w_{0}\sigma,\:
\text{if
$\sigma\neq w_{0}\sigma$}.
\end{align}
In the notations of section \ref{secBLO}, we define vector bundles
$\tilde{E}(\sigma)$ and $E(\sigma)$ over $\widetilde X$ resp. X by 
\begin{align}\label{Definition des Hilfsbundels}
\tilde E(\sigma):=\bigoplus_{\substack{\nu\in\hat{K}\\ m_{\nu}(\sigma)\neq
0}}\tilde
E_{\nu};\qquad
E(\sigma):=\bigoplus_{\substack{\nu\in\hat{K}\\ m_{\nu}(\sigma)\neq 0}} E_{\nu}.
\end{align}
Then $\tilde E(\sigma)$ and $E(\sigma)$ admit gradings $\tilde
E(\sigma)=\tilde{E}^{+}(\sigma)\oplus \tilde{E}^{-}(\sigma)$ and
$E(\sigma)=E^{+}(\sigma)\oplus E^{-}(\sigma)$
defined by the sign of $m_{\nu}(\sigma)$. For $\nu\in\hat{K}$ let
$A_{\nu}$ and $A_\nu^d$ be as in section \ref{secBLO} and define
\begin{align*}
A(\sigma):=\bigoplus_{\substack{\nu\in\hat{K}\\ m_{\nu}(\sigma)\neq
0}}A_{\nu}+c(\sigma);
\quad A(\sigma)_d:=\bigoplus_{\substack{\nu\in\hat{K}\\ m_\nu(\sigma)\neq
0}}A_{\nu}^d+c(\sigma),
\end{align*}
where $c(\sigma)$ is as in section \ref{subsecprs}. 
Let $\tilde{A}(\sigma)$ be the lift of 
$A(\sigma)$ to $\tilde{E}(\sigma)$.
Let 
\begin{align}\label{Defh}
h^{\sigma}_{t}(g):=e^{-tc(\sigma)}\sum_{\nu\in\hat{K}}
m_{\nu}(\sigma)h^{\nu}_{t}(g),
\end{align}

where $h^{\nu}_{t}$ is as in the previous section. Then by \eqref{Def der m} 
and \eqref{fouriertrf2}, for a principal series representation 
$\pi_{\sigma',\lambda}$, $\sigma'\in\hat{M}$, $\lambda\in\mathbb{R}$ we have
\begin{align}\label{FTSup}
\Theta_{\sigma',\lambda}(h^{\sigma}_{t})=e^{-t\lambda^{2}} \quad
\text{for $\sigma'\in\{\sigma, w_{0}\sigma\}$};\qquad
\Theta_{\sigma',\lambda}(h^{\sigma}_{t})=0, \quad\text{otherwise}. 
\end{align}
Next we state a generalized resolvent formula which is due to Bunke and Olbrich.

\begin{prop}\label{Resolvent}
Let $s_{1},\dots,s_{N}$ be complex numbers with $s_i^2\neq s_{i'}^2$ for $i\neq
i'$. Then for every
$z\in\mathbb{C}-\{-s_{1}^{2},\dots,-s_{N}^{2}\}$ one has
\begin{align*}
\sum_{i=1}^{N}\frac{1}{s_{i}^{2}+z}\prod_{\substack{i'=1\\ i'\neq
i}}^{N}\frac{1}{s_{i'}^{2}-s_{i}^{2}}=\prod_{i=1}^{N}\frac{
1}{s_{i}^{2}+z}.
\end{align*}
\end{prop}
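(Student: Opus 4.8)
The plan is to prove the identity
\[
\sum_{i=1}^{N}\frac{1}{s_{i}^{2}+z}\prod_{\substack{i'=1\\ i'\neq i}}^{N}\frac{1}{s_{i'}^{2}-s_{i}^{2}}=\prod_{i=1}^{N}\frac{1}{s_{i}^{2}+z}
\]
by a partial fraction decomposition argument in the variable $z$. First I would set $x:=z$ and $a_i:=s_i^2$, so that the $a_i$ are pairwise distinct by hypothesis, and consider the rational function $R(x):=\prod_{i=1}^{N}(a_i+x)^{-1}$. Since the denominator has $N$ simple zeros at $x=-a_1,\dots,-a_N$, the function $R(x)$ admits a partial fraction expansion $R(x)=\sum_{i=1}^{N}\frac{c_i}{a_i+x}$ for suitable constants $c_i\in\mathbb{C}$.

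The key step is to compute $c_i$ by the usual residue formula: multiplying both sides by $(a_i+x)$ and evaluating at $x=-a_i$ gives
\[
c_i=\lim_{x\to -a_i}(a_i+x)R(x)=\prod_{\substack{i'=1\\ i'\neq i}}^{N}\frac{1}{a_{i'}-a_i}=\prod_{\substack{i'=1\\ i'\neq i}}^{N}\frac{1}{s_{i'}^{2}-s_{i}^{2}}.
\]
Substituting this value of $c_i$ back into the partial fraction expansion and replacing $a_i$ by $s_i^2$ yields exactly the claimed identity, valid for all $x=z\notin\{-s_1^2,\dots,-s_N^2\}$, which is precisely the stated domain.

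Alternatively, one can clear denominators and verify the polynomial identity $\sum_{i=1}^{N}\prod_{i'\neq i}\frac{a_{i'}+x}{a_{i'}-a_i}=1$ by observing that the left-hand side is a polynomial in $x$ of degree $\le N-1$ which takes the value $1$ at each of the $N$ distinct points $x=-a_j$ (only the $i=j$ term survives), hence equals the constant polynomial $1$ by Lagrange interpolation; this is just the Lagrange interpolation formula for the constant function $1$ at nodes $-a_1,\dots,-a_N$. I do not expect any genuine obstacle here: the statement is an elementary identity, and the only point requiring the hypothesis $s_i^2\neq s_{i'}^2$ for $i\neq i'$ is the existence and well-definedness of the partial fraction expansion (equivalently, the distinctness of the interpolation nodes). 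The mild care needed is simply to track that the excluded set $\{-s_1^2,\dots,-s_N^2\}$ in the hypothesis matches the poles of both sides, so the identity holds as an equality of rational functions on the stated domain.
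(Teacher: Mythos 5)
Your proof is correct: the identity is exactly the partial fraction expansion of $\prod_i(s_i^2+z)^{-1}$ (equivalently, Lagrange interpolation of the constant $1$ at the distinct nodes $-s_i^2$), and the hypothesis $s_i^2\neq s_{i'}^2$ is used precisely where you say it is. The paper itself gives no argument but merely cites Bunke--Olbrich, Lemma 3.5, whose proof is the same standard computation, so your write-up is a complete and faithful substitute.
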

\begin{proof}
This is proved by Bunke and Olbrich, \cite[Lemma 3.5]{Bunke}.
\end{proof}
Since the operators $A_\nu$ are bounded from below, there
exists a
$\lambda_0(\sigma)\in\R$
such that $A(\sigma)\geq\lambda_0(\sigma)$. Let $\lambda_{0}<\lambda_{1}<\dots$
be the sequence of eigenvalues of $A(\sigma)$. This sequence
might be either a finite or an infinte. For each $\lambda_{k}$ let
$\mathcal{E}(\lambda_{k})$ be the eigenspace of $A(\sigma)$ with eigenvalue
$\lambda_{k}$ and define its graded dimension by
$m_{\s}(\lambda_{k},\sigma):=\dim_{\gr}\mathcal{E}(\lambda_{k})$.
Now fix $N\in\N$ with $N>d/2$ and choose
distinct $s_1,\dots,s_{N}\in\C$, such
that $\Real(s_{i})>2n$ and such that $\Real(s_i^2)>\max\{0,-\lambda_0(\sigma)\}$
for all
i.
Then by Proposition \ref{Resolvent} we have
\begin{align}\label{EqPrTr}
\sum_{k}m_{\s}(\lambda_{k},\sigma)\prod_{i=1}^{N}\frac{1}{\lambda_{k}+s_{
i}^{2}}=\int_{0}^{\infty}\sum_{i=1}^{N}\Tr_{\s}e^{-t\left(A(\sigma)_{d}+s_{i}
^{2}\right)}\prod_{\substack{i'=1\\ i'\neq
i}}^{N}\frac{1}{s_{i'}^{2}-s_{i}^{2}}
dt,
\end{align}
where the super-trace is taken with respect to the grading defined above. The
sum on the left hand 
side of \eqref{EqPrTr} converges absolutely by \eqref{Wlaw}.
We compute the right hand side of \eqref{EqPrTr} using the \eqref{summed} and
the invariant trace
formula stated in
Theorem \ref{Spurf}. From now on we assume that in the weight $\Lambda(\sigma)$
of 
$\sigma$ as in \eqref{RepM} all $k_i(\sigma)$ are integers. If 
all $k_i(\sigma)$ are half-integral, one can proceed in the same way by making 
the appropriate modifications in the treatment of the distribution
$\mathcal{I}$.
To save notation we let $\epsilon(\sigma)=1$, if $\sigma=w_{0}\sigma$ and
$\epsilon(\sigma)=2$, if $\sigma\neq w_{0}\sigma$.\\
We begin with the identity contribution.
The polynomial $P_\sigma(\lambda)$ is even and of degree $d-1$ and by
\cite[section 2.8]{MP} it
satisfies $P_\sigma(\lambda)=P_{w_0\sigma}(\lambda)$.
Thus by \eqref{Idcontr} and \eqref{FTSup} one has
\begin{align}\label{Icontr}
\int_{0}^{\infty}{\sum_{i=1}^{N}I(h^{\sigma}_{t})e^{-ts_{i}^{2}}\prod_{\substack
{i'=1\\ i'\neq
i}}^{N}\frac{1}{s_{i'}^{2}-s_{i}^{2}}dt}
=\epsilon(\sigma)\pi{\rm{vol}}(X)\sum_{i=1}^{N}P_{\sigma}(
s_{i})\frac{1}{s_{i}}\prod_{\substack{i'=1\\ i'\neq
i}}^{N}\frac{1}{s_{i'}^{2}-s_{i}^{2}}.
\end{align}
Next we come to the hyperbolic contribution. For $\gamma\in\CC(\Gamma)_{\s}$ we
let
$L_{{\rm{sym}}}(\gamma,\sigma):=L(\gamma,\sigma)$, if
$\sigma=w_{0}\sigma$ and
$L_{{\rm{sym}}}(\gamma,\sigma):=L(\gamma,\sigma)+L(\gamma,w_{0}\sigma)$, 
if $\sigma\neq w_0\sigma$.
Here $L(\gamma,\sigma)$ is as in \eqref{hyperbcontr}.
Then using \eqref{Hyperb} and \eqref{FTSup} we obtain
\begin{align*}
&\int_{0}^{\infty}{\sum_{i=1}^{N}H(h^{\sigma}_{t})e^{-ts_{i}^{2}}\prod_{
\substack{i'=1\\ i'\neq
i}}^{N}\frac{1}{s_{i'}^{2}-s_{i}^{2}}dt}
=\sum_{i=1}^{N}\sum_{[\gamma]\in\CC(\Gamma)_{\s}-\left[
1\right]}
\ell(\gamma_{0
})L_{{\rm{sym}}}(\gamma,\sigma)e^{-s_il(\gamma)}\frac{1}{2s_{i}}\prod_{\substack
{i'=1\\ i'\neq
i}}^{N}\frac{1}{s_{i'}^{2}-s_{i}^{2}}.
\end{align*}
Since $\Real(s_i)>2n$ for every i, the sum on the right hand side converges
absolutely by equation \eqref{LS} and equation \eqref{estdet}. By \cite[section
3.2.5]{Goodman} if $n$ is
even we have $\check{\sigma}=\sigma$ 
and if $n$ is odd we have
$\check{\sigma}=w_0\sigma$ for every $\sigma\in\hat{M}$.
Thus, applying \eqref{LogSZF} we obtain
\begin{align}\label{Scontr}
\int_{0}^{\infty}{\sum_{i=1}^{N}H(h^{\sigma}_{t})e^{-ts_{i}^{2}}\prod_{\substack
{i'=1\\ i'\neq
i}}^{N}\frac{1}{s_{i'}^{2}-s_{i}^{2}}dt}
=\sum_{i=1}^{N}\frac{d}{ds}\biggr|_{s=s_i}\log{S(s,\sigma)
}\frac{1}{2s_{i}}\prod_{\substack{i'=1\\i'\neq
i}}^{N}\frac{1}{s_{i'}^{2}-s_{i}^{2}}.
\end{align}
For the distribution $T$ it follows from 
\eqref{Fouriertrafo T} and \eqref{FTSup} 
that
\begin{align}\label{Tcontr}
\int_{0}^{\infty}{\sum_{i=1}^{N}T(h^{\sigma}_{t})e^{-ts_{i}^{2}}\prod_{\substack
{i'=1\\ i'\neq
i}}^{N}\frac{1}{s_{i'}^{2}-s_{i}^{2}}dt}
=\frac{\epsilon(\sigma)\dim(\sigma)}{2}C(\Gamma)\sum_{i=1}^{N}\frac{1}{s_{i}}
\prod_{
\substack{i'=1\\
i'\neq i}}^{N}\frac{1}{s_{i'}^{2}-s_{i}^{2}}.
\end{align}
Using Theorem \ref{Hoffmanns Theorem}, Proposition \ref{Para},
and equation \eqref{FTSup} we compute:
\begin{align*}
&\int_{0}^{\infty}{\sum_{i=1}^{N}\mathcal{I}(h^{
\sigma}_{t})e^{-ts_{i}^{2}}\prod_{\substack{i'=1\\ i'\neq
i}}^{N}\frac{1}{s_{i'}^{2}-s_{i}^{2}}dt}=-\frac{p\epsilon(\sigma)}{2}\sum_{j=2}^
{n+1}\sum_{\substack{m_0\leq
l\\<\left|k_{j}
(\sigma)\right|+\rho_
j}}
c_{j,l}(\sigma)\sum_{i=1}^{N}\frac{1}{l+s_{i}}\frac{1}{s_{i}}\prod_{\substack{
i'=1\\ i'\neq i}}^{N}\frac{1}{s_{i'}^{2}-s_{i}^{2}}\\
&-\frac{p\epsilon(\sigma)\dim(\sigma)}{2}\sum_{i=1}^{N}
\left(\gamma+\psi(1+s_{i})+\sum_{\substack{1\leq
l<
m_0}}\frac{1}{l+s_i}\right)\frac{1}{s_{i}}\prod_{
\substack{i'=1\\ i'\neq
i}}^{N}\frac{1}{s_{i'}^{2}-s_{i}^{2}}\\
&-\frac{1}{4}\sum_{\substack{j=2\\
|k_j(\sigma)|+\rho_j>0}}^{n+1}\sum_{i=1}^{N}\frac{p\epsilon(\sigma)\dim(\sigma)}
{
\left|k_{j}(\sigma
)\right|+\rho_{j}+s_{i}}\frac{1}{
s_{i}
}\prod_{\substack{i'=1\\ i'\neq
i}}^{N}\frac{1}{s_{i'}^{2}-s_{i}^{2}}
-\frac{1}{4}\sum_{i=1
}^{N}Q(\sigma,is_{i})\frac{p\epsilon(\sigma)}{s_{i}}\prod_{\substack
{i'=1\\ i'\neq
i}}^{N}\frac{1}{s_{i'}^{2}-s_{i}^{2}}.
\end{align*}
Next  we treat the contribution of the distribution $\mathcal{S}$ defined in
\eqref{definvS}. For
$\sigma\in\hat{M}$ let $\nu_\sigma\in\hat{K}$
be as in section \ref{secC} and let
$\{\beta(\sigma)\}$
and $\{\eta(\sigma)\}$ denote the poles of
$\det{\left(I(\sigma)\circ
\mathbf{C}(\nu_\sigma:\sigma:s) \right)}$ 
on $\left(0,n\right]$ and on $\{s\in\C\colon\Real(s)<0\}$, counted with
multiplicity
divided by $\dim(\nu_\sigma)$ as in section \ref{secC}. Then
by \eqref{FEEs}, \eqref{Sc},
\eqref{SpC} and \eqref{FTSup}
one has
\begin{align*}
&\int_{0}^{\infty}{\sum_{i=1}^{N}\mathcal{S}
(h^{\sigma}_{t})e^{-ts_{i}^{2}}\prod_{\substack{i'=1\\ i'\neq
i}}^{N}\frac{1}{s_{i'}^{2}-s_{i}^{2}}dt}
=\frac{\epsilon(\sigma)}{4}\sum_{\{\eta(\sigma)\}}\sum_{i=1}^{N}\left(\frac{1}{
\eta(\sigma)-s_{i}}+\frac{1}{\bar{\eta}(\sigma)-s_{i}}\right)\frac{1}{s_{i}}
\prod_{
\substack{i'=1\\ i\neq
i'}}^{N}\frac{1}{s_{i'}^{2}-s_{i}^{2}}\\
&+\frac{\epsilon(\sigma)}{2}\sum_{\{ \beta(\sigma)\}}\sum_{
i=1}^{N}\frac{1}{s_{i}
+\beta(\sigma)}\frac{1}{s_{i}}\prod_{\substack{i'=1\\ i'\neq
i}}^{N}\frac{1}{s_{i'}^{2}-s_{i}^{2}}+\frac{\epsilon(\sigma)}{4}\sum_{
i=1}^{N}\log{q(\sigma)}\frac{1}{s_{i}}\prod_{\substack{i'=1\\ i'\neq
i}}^{N}\frac{1}{s_{i'}^{2}-s_{i}^{2}}\\
&+\frac{\epsilon(\sigma)p\dim(\sigma)}{4}\sum_{j=2}^{n+1}\sum_{
i=1}^{N}\frac{
1}{\left|k_j(\sigma)\right|+\rho_j+s_i}\frac{1}{s_{i}}\prod_{
\substack{i'=1\\ i'\neq
i}}^{N}\frac{1}{s_{i'}^{2}-s_{i}^{2}}.
\end{align*}
Finally, the residual contribution is nonzero only if $\sigma=w_{0}\sigma$ and
using
\eqref{Resfrml} and \eqref{FTSup} it follows that in this
case we have
\begin{align*}
\int_{0}^{\infty}{\sum_{i=1}^{N}R(h^{\sigma}_{t}
)e^{-ts_{i}^{2}}\prod_{\substack{i'=1\\ i'\neq
i}}^{N}\frac{1}{s_{i'}^{2}-s_{i}^{2}}dt}=\frac{c_{1}(\sigma)-c_{2}(\sigma)}{4}
\sum_{i=1}^{N}\frac{1}{s_{i}^{
2}}\cdot\prod_{i'=1}^{N}\frac{1}{s_{i'}^{2}-s_{i}^{2}}.
\end{align*}
Now for $\sigma\neq w_{0}\sigma$
put
$K(\sigma,s)=0$ and for $\sigma=w_0\sigma$ put
$K(\sigma,s)=-\frac{c_{1}(\sigma)}{s}$. Moreover let $c_{\Gamma}
(\sigma):=\epsilon(\sigma)\left(\dim(\sigma)C(\Gamma)-\dim(\sigma)\gamma
p\right)$. 
Then the computations of this section can be summarized in the following
proposition. 
\begin{prop}\label{PropSymS}
For $s\in\C$ with $\Real(s)>2n$ define
\begin{align*}
\Xi(s,\sigma)
:=&\exp\left(2\pi{\rm{vol}}(X)\epsilon(\sigma)\int_{0}^{s}{P_{\sigma}
(r)dr}-\epsilon(\sigma)\frac{p}{2}\int_{0}^{s}{Q(\sigma,
ir)dr
}+sc_{\Gamma}(\sigma)\right)\\
&\cdot\left(\Gamma(1+s)\right)^{-p\epsilon(\sigma)\dim(\sigma)}\cdot
S(s,\sigma).
\end{align*}
Then there exist polynomials $C(\lambda_k;s)$ 
and $C(\sigma;s)$  in $s$ which are of degree
at most $2(N-2)$ such that
the logarithmic derivative of $\Xi$ satisfies
\begin{align*}
\frac{\Xi'(s,\sigma)}{\Xi(s,\sigma)}
=&2s\sum_{k}m_{\s}(\lambda_{k},
\sigma)\left(\frac{1}{s^{2}+\lambda_{k}}+C(\lambda_k;s
)\right)+\sum_{1\leq l< m_0}p\epsilon(\sigma)\dim(\sigma)
\frac{1}{l+s}\\ &+\sum_{j=2}^{n+1}\sum_{\substack{m_0\leq l<\\ \left|k_{j}
(\sigma)\right|+\rho_{j}}} p\epsilon(\sigma)c_
{j,l}(\sigma)\cdot\frac{1}{l+s}
+\sum_{\eta(\sigma)}\frac{\epsilon(\sigma)}{2}\left(\frac{1}{s-\eta(\sigma)}
+\frac{1
}{s-\bar{\eta}(\sigma)} \right)\\ &-\sum_{
\beta(\sigma)}\frac{
\epsilon(\sigma)}{s+\beta(\sigma)}-\frac{\epsilon(\sigma)\log q(\sigma)}{2}
+K(\sigma,s) +sC(\sigma;s),
\end{align*}
where all involved sums converge absolutely.
\end{prop}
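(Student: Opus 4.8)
The plan is to substitute the identity \eqref{EqPrTr} into the invariant trace formula of Theorem~\ref{Spurf} applied to the test function $h^\sigma_t$, to replace each of the six distributions on the geometric side by the explicit expression already obtained for it, and to solve the resulting identity for the hyperbolic term, which by \eqref{Scontr} is where $\tfrac{d}{ds}\log S(s,\sigma)$ sits. First I would note that \eqref{summed} together with \eqref{Defh} gives $\Tr_{\s}e^{-t(A(\sigma)_d+s_i^2)}=e^{-ts_i^2}\,\Tr\pi_{\Gamma,d}(h^\sigma_t)$, so that by Theorem~\ref{Spurf} the right-hand side of \eqref{EqPrTr} equals the sum, over the six distributions $D$, of $\int_0^\infty\sum_{i=1}^N D(h^\sigma_t)\,e^{-ts_i^2}\prod_{i'\neq i}(s_{i'}^2-s_i^2)^{-1}\,dt$. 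These six integrals have been computed above: \eqref{Icontr} for $I$, \eqref{Scontr} for $H$, \eqref{Tcontr} for $T$, the display obtained from Theorem~\ref{Hoffmanns Theorem}, Proposition~\ref{Para} and \eqref{FTSup} for $\mathcal I$, the display obtained from \eqref{FEEs}, \eqref{Sc}, \eqref{SpC} and \eqref{FTSup} for $\mathcal S$, and the display obtained from \eqref{Resfrml} and \eqref{FTSup} for $R$. Moving the hyperbolic term to one side, one obtains an identity $\sum_{i=1}^N G(s_i)\prod_{i'\neq i}(s_{i'}^2-s_i^2)^{-1}=0$, valid whenever $s_1,\dots,s_N$ are distinct with $\Real(s_i)>2n$ and $\Real(s_i^2)>\max\{0,-\lambda_0(\sigma)\}$, where $G(s)$ is the sum of $\tfrac{1}{2s}\tfrac{d}{ds}\log S(s,\sigma)$, the explicit rational and digamma terms coming from $I,T,\mathcal I,R,\mathcal S$, and minus the eigenvalue part.

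To make the eigenvalue part a genuine function of $s$, I would first use Proposition~\ref{Resolvent} to rewrite $\prod_{i=1}^N(\lambda_k+s_i^2)^{-1}=\sum_{i=1}^N(\lambda_k+s_i^2)^{-1}\prod_{i'\neq i}(s_{i'}^2-s_i^2)^{-1}$ for each $k$, and then insert the even polynomials $C(\lambda_k;s):=-\lambda_k^{-1}\sum_{m=0}^{N-2}(-s^2/\lambda_k)^m$ of degree $2(N-2)$; since $\sum_{i=1}^N P(s_i^2)\prod_{i'\neq i}(s_{i'}^2-s_i^2)^{-1}=0$ for every polynomial $P$ of degree $\le N-2$, this insertion does not change the sum, while it turns $(\lambda_k+s^2)^{-1}+C(\lambda_k;s)$ into a quantity of size $O(\lambda_k^{-N})$, so that by \eqref{Wlaw} the series $\sum_k m_{\s}(\lambda_k,\sigma)\bigl((\lambda_k+s^2)^{-1}+C(\lambda_k;s)\bigr)$ converges absolutely and may be pulled through the finite $i$-sum. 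Next I would invoke the elementary fact that if a meromorphic function $G$ satisfies $\sum_{i=1}^N G(s_i)\prod_{i'\neq i}(s_{i'}^2-s_i^2)^{-1}=0$ on a nonempty open set of distinct tuples, then $G$ is an even polynomial of degree $\le 2(N-2)$: replacing $s_1$ by $-s_1$ leaves all the weights unchanged, forcing $G(-s)=G(s)$, and then, writing $G(s)=\widehat G(s^2)$ and $x_i=s_i^2$, the relation says precisely that every $(N-1)$-st divided difference of $\widehat G$ vanishes, hence $\deg\widehat G\le N-2$. Applying this to the $G$ above and setting $C(\sigma;s):=2G(s)$, a relation $G(s)=\tfrac{1}{2} C(\sigma;s)$ results; multiplying it by $2s$ yields a closed formula for $\tfrac{d}{ds}\log S(s,\sigma)$ in terms of the eigenvalue series, the explicit $I,T,\mathcal I,R,\mathcal S$ terms, and $sC(\sigma;s)$.

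Finally I would add to this the logarithmic derivative of the prefactor of $\Xi(s,\sigma)$, namely $2\pi\vol(X)\epsilon(\sigma)P_\sigma(s)-\tfrac{p}{2}\epsilon(\sigma)Q(\sigma,is)+c_\Gamma(\sigma)-p\,\epsilon(\sigma)\dim(\sigma)\psi(1+s)$, and check that this prefactor has been chosen so that, after multiplying the $I,T,\mathcal I,R,\mathcal S$ contributions by $-2s$, everything except the asserted terms cancels. Multiplying the $I$-contribution by $-2s$ cancels the $P_\sigma$-term of the prefactor; the digamma, $\gamma$- and $Q(\sigma,is)$-terms produced when the $\mathcal I$-contribution is multiplied by $-2s$ are cancelled by the $\psi$- and $Q$-terms of the prefactor together with the combination ($-2s$ times the $T$-contribution) $+\,c_\Gamma(\sigma)=-p\,\epsilon(\sigma)\dim(\sigma)\gamma$; the poles $(|k_j(\sigma)|+\rho_j+s)^{-1}$ produced by $-2s$ times the $\mathcal I$- and the $\mathcal S$-contributions cancel in pairs; and the remaining $\tfrac{1}{s}$-contribution is matched using \eqref{Resfrml}, the relation $c_1(\sigma)+c_2(\sigma)=p\dim(\sigma)$ and the definition of $K(\sigma,s)$. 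Here one must separate the case $k_{n+1}(\sigma)=0$ (equivalently $\sigma=w_0\sigma$, $m_0=0$), in which the $j=n+1$ summand of the $\mathcal S$-sum is itself a $\tfrac{1}{s}$-pole that is absent from the $\mathcal I$-sum and cancels against the $R$-contribution. What survives is exactly the right-hand side of the proposition; the $l$-, $j$- and $\beta(\sigma)$-sums there are finite, and the $\eta(\sigma)$-sum converges absolutely by \eqref{SpC}. I expect the only real work to be this last matching of the low-order ($\tfrac{1}{s}$ and $(|k_j(\sigma)|+\rho_j+s)^{-1}$) contributions across the $\mathcal I$-, $\mathcal S$- and $R$-terms, together with keeping the degree bound $2(N-2)$ under control in the regularization of the eigenvalue sum.
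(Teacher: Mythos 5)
Your proposal follows essentially the same route as the paper's proof: apply the invariant trace formula to $h_t^\sigma$ inside \eqref{EqPrTr}, substitute the six Fourier-transform computations already carried out, and observe that, with $s_2,\dots,s_N$ fixed and $s=s_1$ varying, the identity determines $\tfrac{1}{2s}\tfrac{d}{ds}\log S(s,\sigma)$ up to an even polynomial of degree at most $2(N-2)$; your bookkeeping of the cancellations against the prefactor of $\Xi$, including the matching of the $\tfrac1s$-poles via $c_1(\sigma)+c_2(\sigma)=p\dim(\sigma)$ when $\sigma=w_0\sigma$, is correct. One sub-step does not work as written: to get $G(-s)=G(s)$ you evaluate the interpolation identity at $(-s_1,s_2,\dots,s_N)$, but $-s_1$ lies outside the region $\Real(s_i)>2n$ on which the identity is established and on which $G$ (which contains $\tfrac{d}{ds}\log S(s,\sigma)$) is even defined. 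This step is, however, unnecessary: since $s\mapsto s^2$ is injective on $\Real(s)>0$, you may set $\widehat G(s^2):=G(s)$ directly and run the divided-difference argument (which is exactly the paper's multiplication of \eqref{EqPrTr} by $2s\prod_{i'\ge 2}(s_{i'}^2-s^2)$ in disguise); evenness of the resulting polynomial is then automatic rather than a prerequisite. A last small point: your explicit regularizer $C(\lambda_k;s)=-\lambda_k^{-1}\sum_{m=0}^{N-2}(-s^2/\lambda_k)^m$ is undefined when $\lambda_k=0$ and the $O(\lambda_k^{-N})$ bound is only a tail estimate, so one should set $C(\lambda_k;s)=0$ for the finitely many non-positive or small eigenvalues and absorb the resulting polynomial discrepancy into $sC(\sigma;s)$; with that adjustment the absolute convergence via \eqref{Wlaw} and $N>d/2$ goes through as you describe.
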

\begin{proof}
We let $s_{1}=:s$ and fix
$s_{2},\dots,s_{N}$. If we multiply \eqref{EqPrTr} by 
$2s\prod_{i'=2}^{N}\left(s_{i'}^{2}-s^{2}\right)$ 
and use that for $\sigma=w_{0}\sigma$ we have
$k_{n+1}(\sigma)=0$ and $c_{1}(\sigma)+c_{2}(\sigma)=p\dim(\sigma)$,
the Lemma follows from Propositon \ref{Resolvent} and from the above
computations. 
\end{proof}

\section{Twisted Dirac Operators}\label{sectwDO}
\setcounter{equation}{0}
In this section we introduce certain twisted Dirac operators on $\widetilde X$
and
compute
the Fourier transform of the corresponding heat-kernel. We keep the notations of
section \ref{Subsecbr}. Let $\Cl(\mathfrak{p})$
be the Clifford-algebra of $\mathfrak{p}$ with
respect to the scalar product on
$\mathfrak{p}$ defined in section \ref{subsH}. Let
$\Cl(\widetilde X):=G\times_{\Ad}
\Cl(\mathfrak{p})$ be the
Clifford bundle over $\widetilde X$. Let
$\tilde{S}=G\times_{\kappa}\Delta^{2n}$ be the spinor bundle of $\widetilde X$.
We denote by $\cc: \Cl(\mathfrak{p})\otimes \Delta^{2n}\longrightarrow
\Delta^{2n},\: X\otimes
v\mapsto \cc(X)v$ the Clifford multiplication. Since $M$ centralizes
$\mathfrak{a}$,
there is an $\epsilon\in\{\pm 1\}$ such that $\epsilon \cc(H_{1})$ acts on the
spaces
$\Delta^{2n}_{\pm}$ with eigenvalues $\mp i$, where $H_1$ is in section
\ref{subsH}.\\
Now let $\sigma\in\hat{M}$ and assume that $\sigma\neq w_0\sigma$,
$k_{n+1}(\sigma)>0$.
Let $\nu(\sigma)\in\hat{K}$ be as in Proposition \ref{branching}. Let
$\tilde{E}(\sigma)$
be the
vector bundle over $\widetilde X$ as in section \ref{secsymzeta}. Then by
Proposition \ref{branching} one has
$\tilde{E}(\sigma)=\tilde{E}_{\nu(\sigma)}\otimes \tilde{S}$. We denote by 
$\tilde{D}(\sigma)$ the twisted Dirac-Operator on $\tilde{E}(\sigma)$,
multiplied 
by $\epsilon$.  Then we have the following proposition.
\begin{prop}\label{DsqA}
Let $\tilde{A}(\sigma)$ be as in section \ref{secsymzeta}. Then one has
$\tilde{A}(\sigma)=\tilde{D}(\sigma)^2$.
\end{prop}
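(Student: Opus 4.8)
The plan is to compute both sides of the identity $\tilde{A}(\sigma)=\tilde{D}(\sigma)^2$ in the representation-theoretic model, using the standard Parthasarathy–Lichnerowicz formula for twisted Dirac operators on $\widetilde X=G/K$. First I would recall that for a homogeneous spinor bundle twisted by $\tilde E_{\nu(\sigma)}$, the square of the (unnormalized) twisted Dirac operator acting on $C^\infty(G,\kappa\otimes\nu(\sigma))$ is given by $D^2 = -\Omega + (\text{scalar})$, where $\Omega$ is the Casimir of $G$ and the scalar is expressed in terms of the Casimir eigenvalue of $\kappa\otimes\nu(\sigma)$ restricted to $K$ together with $\|\rho_G\|^2$ (the curvature/Lichnerowicz term). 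Since $\widetilde X$ is hyperbolic of constant curvature $-1$, the scalar-curvature term is an explicit constant. On the other side, $\tilde A(\sigma)$ is by definition (section \ref{secsymzeta}) the lift of $A(\sigma)=\bigoplus_\nu A_\nu + c(\sigma)$, i.e.\ $-\Omega$ acting on $C^\infty(G,\nu(\sigma)\otimes\kappa)$ shifted by $c(\sigma)$, where $c(\sigma)=\sum_{j=2}^{n+1}(k_j(\sigma)+\rho_j)^2-\sum_{j=1}^{n+1}\rho_j^2$. So the claim reduces to an identity between the Parthasarathy constant for $\nu(\sigma)\otimes\kappa$ and the constant $c(\sigma)$.

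The key computational step is therefore to evaluate the Casimir eigenvalue of $\kappa\otimes\nu(\sigma)$ decomposed into $K$-types. By Proposition \ref{branching}, $\nu(\sigma)\otimes\kappa$ splits as $\nu^+(\sigma)\oplus\nu^-(\sigma)$; on each irreducible $K$-type $\mu$ the Casimir $\Omega_K$ acts by the Freudenthal number $\langle\Lambda(\mu)+2\rho_K,\Lambda(\mu)\rangle$ (with respect to the normalized form $\langle\cdot,\cdot\rangle$). I would plug in the highest weights: $\Lambda(\nu(\sigma))=\sum_{j=2}^{n+1}(k_j(\sigma)-\tfrac12)e_j$ and $\Lambda(\kappa)=\sum_{j=2}^{n+1}\tfrac12 e_j$, and check that the Parthasarathy formula $D^2=-\Omega - \langle\Lambda(\kappa),\Lambda(\kappa)+2\rho_M\rangle + \langle\rho_G,\rho_G\rangle - \langle\rho_M,\rho_M\rangle + \langle\Lambda(\nu(\sigma)),\Lambda(\nu(\sigma))+2\rho_K\rangle$ (assembled correctly from the twisted-Dirac square over $G/K$) collapses to exactly $-\Omega + c(\sigma)$. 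A clean way to organize this is to observe that $\Lambda(\nu(\sigma))+\Lambda(\kappa)+\rho_M = \sum_{j=2}^{n+1}(k_j(\sigma)+\rho_j)e_j =: \lambda_\sigma|_{\bL}$, so that the sum of the relevant quadratic terms telescopes into $\|\lambda_\sigma|_{\bL}\|^2-\|\rho_G\|^2 = c(\sigma)$ once one accounts for $\|\rho_G\|^2 = \sum_{j=1}^{n+1}\rho_j^2$ in the normalization of section \ref{subsH}.

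I would also need to pin down the role of the factor $\epsilon\in\{\pm1\}$ and the fact that $\tilde D(\sigma)$ is $\epsilon$ times the twisted Dirac operator: since $\epsilon^2=1$, squaring kills it, so this is harmless for the identity $\tilde D(\sigma)^2 = \tilde A(\sigma)$ and only matters later (for $S_a$). Finally I would remark that $\tilde D(\sigma)$ is formally self-adjoint and elliptic, hence $\tilde D(\sigma)^2$ is a well-defined nonnegative Laplace-type operator, and that the grading $\tilde E(\sigma)=\tilde E^+(\sigma)\oplus\tilde E^-(\sigma)$ matches the $\Delta^{2n}_\pm$-grading used to define $\tilde D(\sigma)$ as an odd operator; this compatibility is exactly what Proposition \ref{branching} provides via $\sigma-w_0\sigma=(\kappa^+-\kappa^-)\otimes\iota^*\nu(\sigma)$.

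The main obstacle I expect is bookkeeping with normalizations: the paper uses the rescaled form $\langle\cdot,\cdot\rangle=\tfrac{1}{2(d-1)}B$, and one must be careful that the Clifford multiplication, the metric on $\mathfrak p$, and the constant-curvature normalization of $\widetilde X\cong\mathbb H^d$ all enter the Lichnerowicz term with the right coefficient; a sign or factor-of-two error here would spoil the match with $c(\sigma)$. The cleanest route is probably to cite the corresponding computation of Bunke and Olbrich in the cocompact setting (the operators $\tilde A(\sigma)$, $\tilde D(\sigma)$, $\tilde E(\sigma)$ are defined on $\widetilde X$ exactly as there, independently of $\Gamma$), so that the proposition follows verbatim from their identity; the only thing to verify is that our parametrization of $\sigma$ and our normalization of $\langle\cdot,\cdot\rangle$ agree with theirs, which is already used elsewhere in the paper.
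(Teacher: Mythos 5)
Your approach is essentially the paper's: the proof there regards $\tilde D(\sigma)$ as an operator on $C^\infty(G,\nu(\sigma)\otimes\kappa)$, invokes the Atiyah--Schmid/Parthasarathy identity $\tilde D(\sigma)^2=-\Omega+\bigl(\nu(\sigma)(\Omega_K)-\kappa(\Omega_K)\bigr)\Id$, and then checks by the highest-weight Casimir computation --- exactly your telescoping $\Lambda(\nu(\sigma))+\rho_K=\sum_{j=2}^{n+1}(k_j(\sigma)+\rho_j)e_j$ --- that the constant equals $c(\sigma)$. One caveat: the particular trial form of the Parthasarathy constant you display in your second paragraph does not balance (it evaluates to $\sum_{j}(k_j(\sigma)+\rho_j)^2+\|\rho_G\|^2-2\|\rho_K\|^2$ rather than $c(\sigma)$), so you should write the constant as $\nu(\sigma)(\Omega_K)-\kappa(\Omega_K)$, which your own telescoping identity then evaluates correctly.
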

\begin{proof}
We regard $\tilde{D}(\sigma)$ as an operator on
$C^\infty(G,\nu(\sigma)\otimes\kappa)$, where the latter 
space is as in \eqref{defsect}. In \cite{AS}, it was
assumed that $\rk(G)=\rk(K)$. However,
one can proceed exactly as in \cite[page 53-54]{AS} to obtain
$\tilde{D}
(\sigma)^2=-\Omega+\left(\nu(\sigma)(\Omega_K)-\kappa(\Omega_K)\right)\Id$, 
which is analogous to \cite[equation (A 9)]{AS}.
The highest weight
$\Lambda(\nu(\sigma))$ of $\nu(\sigma)$ is as in Proposition \ref{branching} 
and the highest weight of $\kappa$ is given by
$\Lambda(\kappa)=\frac{1}{2}e_2+\ldots+\frac{1}{2}e_{n+1}$.  
Thus applying \cite[equation 2.18]{MP1} 
one computes $\nu(\sigma)\left(\Omega_K\right)-\kappa(\Omega_K)=c(\sigma)$ and
the Lemma is proved.
\end{proof}
Then $\tilde{D}(\sigma)$ and
$\tilde{D}(\sigma)^2$ with domain
$C_c^\infty(G,\nu(\sigma)\otimes\kappa)\subset L^2(G,\nu(\sigma)\otimes\kappa)$
are essentially selfadjoint, see \cite[Theorem II.5.7]{LM} and \cite[section
4]{MP}. Their selfajoint closures will be denoted by the
same symbols.
 Let $X_1,\dots,X_{2n+1}$
be an orthonormal base of $\mathfrak{p}$. Then by \eqref{Eqheat}
$\tilde{D}(\sigma)e^{-t\tilde{D}(\sigma)^2}$ acts on
$L^2(G,\nu(\sigma)\otimes\kappa)$ as a convolution operator with smooth kernel
\begin{align*}
K_t^\sigma(g):=\epsilon e^{-tc(\sigma)}\sum_{i=1}^{2n+1}\Id\otimes
\cc(X_i)\circ\frac{d}{dt}\biggr|_{t=0}
H_t^{\nu(\sigma)\otimes\kappa}(\exp{-tX_i}g),
\end{align*}
where $H_t^{\nu(\sigma)\otimes\kappa}$ is as in \eqref{DefH}.
Since $H_t^{\nu(\sigma)\otimes\kappa}$ belongs to all 
Harish-Chandra Schwarz spaces 
$\left(\mathcal{C}^{q}(G)\otimes
\End\left(V_{\nu(\sigma)}\otimes\Delta^{2n}\right)\right)^K$, $q>0$ and since
these
spaces are stable under the differential 
action of $U(\mathfrak{g}_\C)$, it follows that
$K_t^{\nu(\sigma)}\in\left(\mathcal{C}^{q}(G)\otimes
\End\left(V_{\nu(\sigma)}\otimes\Delta^{2n}\right)\right)^K$.
Define a $K$-finite Schwarz function $k_t^\sigma:=\Tr K_t^\sigma$,
where $\Tr$ is the trace in
$\End\left(V_{\nu(\sigma)}\otimes\Delta^{2n}\right)$. 
Then the Fourier transform of $k_t^\sigma$ was computed by Moscovici and Stanton
\cite{MS}. It is given as follows.

\begin{prop}\label{FTDir}
Let $\sigma\in\hat{M}$, $k_{n+1}(\sigma)>0$.
Then for $\lambda\in\R$ one has
\begin{align*}
&\Theta_{\sigma,\lambda}(k_t^\sigma)=(-1)^{n}\lambda
e^{-t\lambda^{2}};\quad
\Theta_{w_{0}\sigma,\lambda}(k_t^\sigma)=(-1)^{n+1}\lambda
e^{-t\lambda^{2}}.
\end{align*}
Moreover, if $\sigma'\in\hat{M}$, $\sigma'\notin\{\sigma,w_{0}\sigma\}$, for
every
$\lambda\in\R$ one has $\Theta_{\sigma',\lambda}(k_t^\sigma)=0$.
\end{prop}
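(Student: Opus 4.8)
The plan is to carry out the computation of Moscovici and Stanton \cite{MS}, adapted to the normalizations used here. The starting point is the explicit formula for the convolution kernel $K_t^\sigma$ of $\tilde D(\sigma)e^{-t\tilde D(\sigma)^2}$ displayed above, so that $\Theta_{\sigma',\lambda}(k_t^\sigma)=\tr\,\pi_{\sigma',\lambda}(k_t^\sigma)$, a finite-rank trace since $k_t^\sigma$ is $K$-finite. Because $\tilde D(\sigma)$ commutes with the left $G$-action, in the Plancherel decomposition of $L^2(G,\nu(\sigma)\otimes\kappa)$ the operator $\tilde D(\sigma)e^{-t\tilde D(\sigma)^2}$ acts on the finite-dimensional multiplicity spaces, and $\pi_{\sigma',\lambda}(k_t^\sigma)$ is, under these identifications, its restriction to the part of the multiplicity space sitting over $\pi_{\sigma',\lambda}$. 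Thus the task is to diagonalize $\tilde D(\sigma)$ there.

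The first step is to pin down the eigenvalue of $\tilde D(\sigma)^2$. By Proposition \ref{DsqA}, $\tilde D(\sigma)^2=-\Omega+c(\sigma)$ on $C^\infty(G,\nu(\sigma)\otimes\kappa)$, and by section \ref{subsecprs} one has $\pi_{\sigma',\lambda}(\Omega)=-\lambda^2+c(\sigma')$. Since $\Lambda(w_0\sigma)$ differs from $\Lambda(\sigma)$ only by the sign of $k_{n+1}(\sigma)$, which enters $c(\cdot)$ only through its square (and $\rho_{n+1}=0$), we have $c(\sigma)=c(w_0\sigma)$; hence $\tilde D(\sigma)^2$ acts as $\lambda^2$ on the $\sigma$- and $w_0\sigma$-isotypic parts. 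The second step is the Parthasarathy-type reduction: restricting $\tilde D(\sigma)=\epsilon\sum_i(\Id\otimes\cc(X_i))\nabla_{X_i}$ to an isotypic component of a principal series representation induced from $P_0=MAN$, the contributions of the $\mathfrak n\oplus\bar{\mathfrak n}$-directions annihilate the relevant vectors and one is left with $\epsilon\,\cc(H_1)$ times the $\mathfrak a$-eigenvalue, which is $\pm i\lambda$; this is exactly the computation of \cite{MS}. Combined with $(\epsilon\,\cc(H_1))^2=-1$ this reproves that $\tilde D(\sigma)$ has eigenvalues $\pm\lambda$ on those parts and, crucially, that on each of them $\tilde D(\sigma)e^{-t\tilde D(\sigma)^2}$ equals $\epsilon\,\cc(H_1)$ times a scalar depending only on $\lambda$ and $t$.

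The third step is the branching bookkeeping. Since $\epsilon\,\cc(H_1)$ acts on $\Delta^{2n}_\pm$ by $\mp i$ (section \ref{Subsecbr}), the trace of $\epsilon\,\cc(H_1)$ over the $\sigma'$-isotypic part of $\nu(\sigma)\otimes\kappa|_M$ equals $-i\big([\iota^*\nu(\sigma)\otimes\kappa^+:\sigma']-[\iota^*\nu(\sigma)\otimes\kappa^-:\sigma']\big)$, because $\Delta^{2n}_\pm$ carry $\kappa^\pm$. By Proposition \ref{branching}, $(\kappa^+-\kappa^-)\otimes\iota^*\nu(\sigma)=\sigma-w_0\sigma$, so this difference of multiplicities is the virtual multiplicity $[\sigma-w_0\sigma:\sigma']$, which equals $1$ for $\sigma'=\sigma$, $-1$ for $\sigma'=w_0\sigma$ (using $k_{n+1}(\sigma)>0$, hence $\sigma\neq w_0\sigma$), and $0$ otherwise. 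Multiplying by the $\lambda$-dependent scalar from the previous step, and using that $[\nu_\sigma:\sigma]=1$ so the multiplicity spaces involved are one-dimensional, gives $\Theta_{\sigma',\lambda}(k_t^\sigma)=\pm\lambda e^{-t\lambda^2}$ for $\sigma'\in\{\sigma,w_0\sigma\}$ with opposite signs, and $0$ otherwise — which is the shape of the claimed formula.

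The main obstacle is the final normalization: verifying that the overall $\lambda$-independent sign is exactly $(-1)^n$ (and that there is no extra dimension factor such as $2^{n-1}$). This requires running the Parthasarathy/Clifford computation of \cite{MS} with care — tracking the factor $\epsilon$, the product $(-i)\cdot(\pm i\lambda)=\pm\lambda$, the precise realization of the heat kernel of $-\Omega$ through Plancherel inversion, and the interaction of the Clifford module structure with the $K$-type bookkeeping. Once that constant is identified, the vanishing for $\sigma'\notin\{\sigma,w_0\sigma\}$ and the sign flip between $\sigma$ and $w_0\sigma$ follow automatically from Proposition \ref{branching}, exactly as sketched above.
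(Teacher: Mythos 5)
Your proposal follows essentially the same route as the paper: reduce $\Theta_{\sigma',\lambda}(k_t^\sigma)$ to $e^{t(\pi_{\sigma',\lambda}(\Omega)-c(\sigma))}$ times the trace of the algebraic Dirac operator $\epsilon\sum_i\pi(X_i)\otimes\Id\otimes\cc(X_i)$ on the finite-dimensional $K$-invariant multiplicity space, evaluate that trace via the Moscovici--Stanton computation, and finish with the branching identity of Proposition \ref{branching}. The one normalization you leave open, the factor $(-1)^n$, comes in the paper not from the Clifford computation itself but from the fact that $\dim\left(V_{\sigma'}\otimes V_{\nu(\sigma)}\otimes\Delta^{2n}_{\pm}\right)^M$ computes multiplicities of $\check{\sigma}'$ rather than of $\sigma'$, combined with $\check{\sigma}'=\sigma'$ for $n$ even and $\check{\sigma}'=w_0\sigma'$ for $n$ odd.
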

\begin{proof}
Let $\pi$ be an admissible unitary representation of $G$ on $\mathcal{H}_\pi$.
Define an
operator $\tilde{D}_{\sigma}(\pi)$ on the $K$-invariant subspace
$\left(\mathcal{H}_{\pi}\otimes
V_{\nu(\sigma)}\otimes\Delta^{2n}\right)^{K}$ 
by $\tilde{D}_{\sigma}(\pi):=\epsilon\sum_{i=1}^{2n+1}\pi(X_{i}
)\otimes\Id\otimes\cc(X_i)$.
Then combining the arguments of \cite{Barbasch}, \cite[section 4]{MP} and
Proposition \ref{DsqA} one
easily obtains $\Tr\left(\pi(k_t^\sigma)\right)=e^{t(\pi(\Omega)-c(\sigma))}
\Tr\tilde{D}_{\sigma}(\pi)$. Now by \cite[Proposition 3.6]{MS}, where an $i$ has
to be added by
the arguments
of Chapter 3 in \cite{MS}, it follows that for every $\sigma'\in\hat{M}$ one
has $\Tr\tilde{D}_{\sigma}(\pi_{\sigma',\lambda})=\lambda\left(\dim{\left(V_{
\sigma'}\otimes V_{\nu(\sigma)}\otimes\Delta^{2n}_{+}\right)}^{M}
-\dim{\left(V_\sigma'\otimes
V_{\nu(\sigma)}\otimes\Delta^{2n}_{-}\right)}^{M}\right)$.
Let $\check{\sigma}$ be the contragredient representation of $\sigma$. By
\cite[section 3.2.5]{Goodman} if $n$ is
even one has $\check{\sigma}=\sigma$ 
and if $n$ is odd one has
$\check{\sigma}=w_0\sigma$. Hence by Proposition
\ref{branching}, for $\sigma'\in\hat{M}$ and $\lambda\in\R$ one has
$\dim{\left(V_{\sigma'}\otimes
V_{\nu(\sigma)}\otimes\Delta^{2n}_{+}\right)}^{M}
-\dim{\left(V_{\sigma'}\otimes
V_{\nu(\sigma)}\otimes\Delta^{2n}_{-}\right)}^{M}=(-1)^{n}\left[
\sigma-w_0\sigma:\sigma'\right]$. If we apply the formula for
$\pi_{\sigma,\lambda}(\Omega)$ from section \ref{subsecprs} , the
Proposition follows.
\end{proof}

\section{The antisymmetric Selberg zeta function}\label{The antisymmetric
Selberg zeta function}\label{secasyms}
\setcounter{equation}{0}
Let $\sigma\in\hat{M}$ with $k_{n+1}(\sigma)>0$. Let
$E(\sigma)=\Gamma\backslash\tilde{E}(\sigma)$
be the locally homogeneous vector bundle over $X$ as in section
\ref{secsymzeta}. Let $\tilde{D}(\sigma)$ be as in the previous section. 
Then $\tilde{D}(\sigma)$ pushes down to an operator $D(\sigma)$ on the
smooth sections of $E(\sigma)$. By Proposition \ref{DsqA} one has
$D(\sigma)^2=A(\sigma)$,
where $A(\sigma)$ is the operator from section \ref{secsymzeta}. The operator
$D(\sigma)$ 
has a well defined restriction to
$L^{2}_{d}(\Gamma\backslash G,\nu(\sigma)\otimes\kappa)$ in the sense of
unbounded operators.
This restriction will be denoted by $D(\sigma)_d$. The space
$L^{2}_{d}(\Gamma\backslash G,\nu(\sigma)\otimes\kappa)$
is an orthogonal direct sum of finite dimensional eigenspaces of
$D(\sigma)_d^{2}$.
On these spaces $D(\sigma)_d$ acts as a symmetric operator, and
thus
$L^{2}_{d}(\Gamma\backslash G,\nu(\sigma)\otimes\kappa)$ is also the orthogonal
direct sum
of eigenspaces of
$D(\sigma)_d$.
By \eqref{Wlaw} the operator
$D(\sigma)_d\:e^{-tD(\sigma)_d^{2}}$ is of trace class. One has
\begin{align}\label{TrDO}
\Tr\left(
D(\sigma)_d\:e^{-tD(\sigma)_{d}^{2}}\right)=\Tr\left(\pi_{\Gamma,d}(k_t^
\sigma)\right),
\end{align}
where $k_t^\sigma$ is as in the previous section.  Let $\{\mu_k\}$ be a fixed
sequence of the eigenvalues of
$D(\sigma)_d$
with
$\mu_{k}=\mu_{k'}$ iff $k=k'$. This sequence might be finite or infinite. Let 
$\mathcal{E}(\mu_{k})$ be the eigenspace of $D(\sigma)$ corresponding to the
eigenvalue $\mu_{k}$ and let
$d(\mu_{k},\sigma):=\dim(\mathcal{E}(\mu_{k}))$. Let $N\in\N$, $N>n+1$. We
choose
distinct points $s_{1},\dots,s_{N}$ such that $\Real(s_j)>2n$, $\Real(s_j^2)>0$
for all j. By Proposition \ref{Resolvent} we have
\begin{align}\label{GlSpII}
2i\sum_{k}d(\mu_{k},\sigma)\mu_{k}\prod_{j=1}^{N}\frac{1}{\mu_{k}^{2}+s_{j}^{2}}
&=2i\int_{0}^{
\infty}\sum_{j=1}^{N}e^{-ts_j^2}\Tr\left(D(\sigma)_{d}\:e^{-tD(\sigma
)_{d}^{2}}\right)\prod_{\substack{j'=1\\ j'\neq
j}}^{N}\frac{1}{s_{j'}^{2}-s_{j}^{2}}dt.
\end{align}
The sum on the left hand side converges 
absoluetly by \eqref{Wlaw}.\\
We compute the right hand side of \eqref{GlSpII} using \eqref{TrDO} and the
invariant trace formula stated in Theorem \ref{Spurf}.
By \cite[equation (2.22)]{MP} we have
$P_{\sigma}(\lambda)=P_{w_{0}\sigma}(\lambda)$. By Proposition \ref{Para} we
have
$\Omega(\sigma,\lambda)=\Omega(w_{0}\sigma,\lambda)$. 
Thus, applying \eqref{Idcontr}, \eqref{Fouriertrafo T} \eqref{Resfrml}, Theorem
\ref{Hoffmanns
Theorem} and Proposition \ref{FTDir}, it follows that 
$I(k_t^\sigma)$, $T(k_t^\sigma)$, $\mathcal{I}(k_t^\sigma)$ and $R(k_t^\sigma)$
vanish.\\
If $L(\gamma,\sigma)$, $L(\gamma,w_0\sigma)$ are as in \eqref{hyperbcontr}, it
follows from
\eqref{Hyperb} and Proposition
\ref{FTDir} that
\begin{align*}
&2i\int_{0}^{\infty}\sum_{j=1}^{N}e^{-ts_{j}^{2}}H(k_t^\sigma
)\prod_{\substack{j'=1\\ j'\neq
j}}^{N}\frac{1}{s_{j'}^{2}-s_{j}^{2}}dt\\
&=(-1)^{n}\sum_{j=1}^{N}\sum_{\left[\gamma\right]\in\CC(\Gamma)_{\s
}-\left[1\right]}l(\gamma_{0})(L(\gamma,
\sigma)-L(\gamma,w_{0}\sigma))e^{-l(\gamma)s_{j}}\prod_{\substack{j'=1\\ j'\neq
j}}^{N}\frac{1}{s_{j'}^{2}-s_{j}^{2}}.
\end{align*}
For $\Real(s)>2n$ we define the anti-symmetric Selberg-zeta function by
$S_{a}(s,\sigma):=\frac{Z(s,\sigma)}{Z(s,w_{0}\sigma)}$. As above, by
\cite[section 3.2.5]{Goodman} if $n$ is
even we have $\overline{\Tr(\sigma)}=\Tr(\sigma)$ 
and if $n$ is odd we have
$\overline{\Tr(\sigma)}=\Tr(w_0\sigma)$ for every $\sigma\in\hat{M}$.
Thus using \eqref{LogSZF} 
we obtain
\begin{align*}
2i\int_{0}^{\infty}\sum_{j=1}^{N}e^{-ts_{j}^{2}}H(k_t^\sigma
)\prod_{\substack{j'=1\\ j'\neq
j}}^{N}\frac{1}{s_{j'}^{2}-s_{j}^{2}}dt=\sum_{j=1}^{N}\frac{d}{ds}\biggr|_{s=s_j
}\log{S_{a}
(s,\sigma)}\prod_{\substack{j'=1\\ j'\neq
j}}^{N}\frac{1}{s_{j'}^{2}-s_{j}^{2}}.
\end{align*}
Now we treat the distribution $\mathcal{S}$. Let $\eta(\sigma)$,
$\eta(w_0\sigma)$ be the 
poles of $\det{\left(I(\sigma)\circ\mathbf{C}(\nu_\sigma:\sigma:s)\right)}$
resp. of $\det{\left(I(w_0\sigma)\circ\mathbf{C}(\nu_\sigma:w_0\sigma
:s)\right)}$ with negative real part and with multiplicity by
$\dim(\nu_\sigma)$ as in section \ref{secC}. Then by \eqref{Sc}, \eqref{SpC},
Remark \ref{RmrkC} and Proposition \ref{FTDir} one has
\begin{align*}
2i\int_{0}^{\infty}\sum_{j=1}^{N}e^{-ts_{j}^{2}}S(k
_t^\sigma)\prod_{\substack{j'=1\\ j'\neq
j}}^{N}\frac{1}{s_{j'}^{2}-s_{j}^{2}}dt=&(-1)^{n+1}\sum_{\eta(\sigma)}
\sum_{j=1}^{N}\frac{1}{
s_j-
\eta(\sigma)}\prod_{\substack{j'=1\\
j'\neq
j}}^{N}\frac{1}{s_{j'}^{2}-s_{j}^{2}}\\
&+(-1)^{n}\sum_{
\eta(w_0\sigma)}\sum_{j=1
}^{N}\frac{1}{s_j-
\eta
(w_0\sigma)}\prod_{\substack{
j'=1\\ j'\neq
j}}^{N}\frac{1}{s_{j'}^{2}-s_{j}^{2}}.
\end{align*}
Let $\sigma\in\hat{M}$ with $k_{n+1}(\sigma)<0$. Then we define the twisted
Dirac operator $D(\sigma)$ as $D(\sigma):=-D(w_0\sigma)$, where $D(w_0\sigma)$
is as in 
section \ref{sectwDO}. We can now prove our main result about the antisymmetric
Selberg zeta
function.

\begin{prop}\label{PropAsymS}
The antisymmetric Selberg zeta function $S_a(s,\sigma)$ satisfies 
\begin{align*}
\frac{S_{a}'(s,\sigma)}{S_{a}(s,\sigma)}=&\sum_{k}\left(\frac{d(\mu_k,\sigma)}{
s-i\mu_k
}-\frac{d(\mu_k,\sigma)}{s+i\mu_k}+C(\mu_k;s)\right)
+(-1)^{n}\sum_{\eta(\sigma)}\left(\frac{1}{s-\eta(\sigma)}
+C'(\eta(\sigma);s\right)\\
&+(-1)^{n+1}\sum_{\eta(w_0\sigma)}\left(\frac{1}{s-\eta(w_0\sigma)}
+C'(\eta(w_0\sigma);s)\right),
\end{align*}
where $C(\mu_k;s)$, $C(\eta(\sigma);s)$ and $C(\eta(w_0\sigma);s)$ are
polynomials in $s$ of degree at most $2(N-2)$.
\end{prop}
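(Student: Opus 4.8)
The plan is to follow the proof of Proposition \ref{PropSymS} almost verbatim, now with the Dirac heat kernel $k_t^\sigma$ in place of $h_t^\sigma$. Assume first that $k_{n+1}(\sigma)>0$. The right-hand side of \eqref{GlSpII} is evaluated by combining \eqref{TrDO} with the invariant trace formula of Theorem \ref{Spurf}: as explained just above, the contributions $I(k_t^\sigma)$, $T(k_t^\sigma)$, $\mathcal{I}(k_t^\sigma)$ and $R(k_t^\sigma)$ vanish (using \eqref{Idcontr}, \eqref{Fouriertrafo T}, \eqref{Resfrml}, Theorem \ref{Hoffmanns Theorem} and Proposition \ref{FTDir}; the relevant integrands are odd in $\lambda$, and $\Theta_{\sigma',0}(k_t^\sigma)=0$), so that only the hyperbolic and the scattering terms survive. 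Their integral transforms have already been computed in the displays preceding the statement: by \eqref{Hyperb}, Proposition \ref{FTDir}, the contragredience relations of \cite[section 3.2.5]{Goodman} and \eqref{LogSZF}, the hyperbolic transform equals $\sum_{j}\frac{d}{ds}\big|_{s=s_j}\log S_a(s,\sigma)\prod_{j'\neq j}(s_{j'}^2-s_j^2)^{-1}$; and by \eqref{Sc}, \eqref{SpC}, Remark \ref{RmrkC} and Proposition \ref{FTDir}, the scattering transform equals the double sum over the poles $\eta(\sigma)$ and $\eta(w_0\sigma)$, the remaining ($\beta$-, digamma- and $\log q$-) terms of \eqref{Sc}--\eqref{SpC} cancelling pairwise because $\nu_{w_0\sigma}=\nu_\sigma$ and Proposition \ref{FTDir} attaches the opposite signs $(-1)^n$ and $(-1)^{n+1}$ to the $\sigma$- and the $w_0\sigma$-summand.

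Having assembled these into \eqref{GlSpII}, I would set $s_1=:s$, fix $s_2,\dots,s_N$ and multiply both sides by $\prod_{j'=2}^{N}(s_{j'}^2-s^2)$, exactly as in the proof of Proposition \ref{PropSymS}. On the spectral side, for each eigenvalue $\mu_k$ one uses the partial-fraction split
\begin{align*}
\frac{2i\mu_k\, d(\mu_k,\sigma)}{\mu_k^2+s^2}\prod_{j'=2}^{N}\frac{s_{j'}^2-s^2}{\mu_k^2+s_{j'}^2}=d(\mu_k,\sigma)\left(\frac{1}{s-i\mu_k}-\frac{1}{s+i\mu_k}\right)+C(\mu_k;s),
\end{align*}
which is valid because the product takes the value $1$ at $s^2=-\mu_k^2$, so that $C(\mu_k;s)$ is a polynomial in $s$ of degree at most $2(N-2)$. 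On the right-hand side the $j=1$ summand reproduces $\frac{d}{ds}\log S_a(s,\sigma)$ together with $(-1)^{n+1}\sum_{\eta(\sigma)}(s-\eta(\sigma))^{-1}$ and $(-1)^{n}\sum_{\eta(w_0\sigma)}(s-\eta(w_0\sigma))^{-1}$, while the summands with $j\geq 2$ become polynomials in $s$ of degree at most $2(N-2)$; distributing the $\eta$-dependent pieces of these polynomials over the corresponding poles defines $C'(\eta(\sigma);s)$ and $C'(\eta(w_0\sigma);s)$, and the series over $\eta$ still converge absolutely by \eqref{SpC}. Solving for $S_a'(s,\sigma)/S_a(s,\sigma)$ and using $-(-1)^{n+1}=(-1)^n$, $-(-1)^n=(-1)^{n+1}$ gives the asserted identity. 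The remaining cases are formal: if $k_{n+1}(\sigma)=0$ then $\sigma=w_0\sigma$ and $S_a(s,\sigma)\equiv 1$, and if $k_{n+1}(\sigma)<0$ then $S_a(s,\sigma)=S_a(s,w_0\sigma)^{-1}$ and $D(\sigma)=-D(w_0\sigma)$, so one deduces the statement from the positive case applied to $w_0\sigma$.

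Since all the analytic input — the invariant trace formula, the Fourier transforms of $k_t^\sigma$, and the factorization of the scattering matrix — is already available, this proof is essentially bookkeeping. The one point that genuinely needs care is the justification of interchanging $\int_0^\infty dt$ with the various sums and the absolute convergence of the resulting series; this is precisely where the hypothesis $N>n+1$ enters, since the $k$-th term on the spectral side decays like $|\mu_k|^{-2N+1}$ while $\#\{k:|\mu_k|\leq T\}=O(T^{2n+1})$ by \eqref{Wlaw}, and the argument is identical to the one already used for Proposition \ref{PropSymS}. The second, more error-prone, point is keeping the signs $(-1)^n$ and $(-1)^{n+1}$ consistent as they pass through Proposition \ref{FTDir}, the contragredience dichotomy of \cite[section 3.2.5]{Goodman} and Remark \ref{RmrkC}; I would cross-check these against the symmetric-case computation.
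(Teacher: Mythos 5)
Your proposal is correct and takes essentially the same route as the paper's own proof, which likewise sets $s_1=:s$, multiplies \eqref{GlSpII} by $\prod_{j'=2}^{N}\left(s_{j'}^{2}-s^{2}\right)$, and combines Proposition \ref{Resolvent}, equation \eqref{TrDO}, Theorem \ref{Spurf} and the computations displayed just before the statement. Your sign bookkeeping (the flip from $(-1)^{n+1}$ to $(-1)^{n}$ when the scattering term is moved to the spectral side) and your reduction of the cases $k_{n+1}(\sigma)\leq 0$ to the case $k_{n+1}(\sigma)>0$ match what the paper does implicitly.
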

\begin{proof}
Clearly we can assume that $k_{n+1}(\sigma)>0$. We let $s_{1}=:s$,
$\Real(s)>2n$, $\Real(s^2)>0$ and fix distinct
$s_{2},\dots,s_{N}$
with
$\Real(s_j)>2n$, $\Real(s_j^2)>0$. Then we multiply both sides of
\eqref{GlSpII} by 
$\prod_{j'=2}^{N}\left(s_{j'}^{2}-s^{2}\right)$
and apply Proposition \ref{Resolvent} to the left hand side and equation
\eqref{TrDO} and
Theorem \ref{Spurf} to the right hand side.
The proposition follows immediately from the previous computations.
\end{proof}
Now we can complete our proof of the meromorphic
continuation of the Selberg
zeta function $Z(s,\sigma)$ for every $\sigma\in\hat{M}$ and describe its
singularities. We say that a
meromorphic
function
$f\colon\C\to\C$ has a singularity of order $k\in\Z$ at $s_0\in\C$ if 
$\lim_{s\to s_0}(s-s_0)^{-k}f(s)$ exists and is not zero.
\begin{thrm}\label{AnconctZ}
Let $\sigma\in\hat{M}$. Then the Selberg zeta function $Z(s,\sigma)$
has a meromorphic continuation to $\mathbb{C}$. Its
singularities associated 
to spectral parameters are as follows:
\begin{enumerate}
\item If $\sigma=w_0\sigma$ at the points $\pm i\sqrt{\lambda_k}$ of order
$m_{\s}(\lambda_{k},\sigma)$, where $\lambda_k$ is a non-zero eigenvalue of
$A(\sigma)$ and 
$m_{\s}(\lambda_{k},\sigma)$ is the graded dimension of the corresponding
eigenspace. Here for $\Real(\lambda_k)<0$ we take the square root with positive
imaginary part.
\item If $\sigma\neq w_0\sigma$ at the points $\pm i\mu_k$ of order
$\frac{1}{2}\left(m_{\s}(\mu_k^2,\sigma)+d(\pm\mu_k,\sigma)-d(\mp\mu_k,
\sigma)\right)$. Here $\mu_k$ is a non-zero eigenvalue of
$D(\sigma)$ of multiplicity $d(\mu_k,\sigma)$ and $m_{\s}(\mu_k^2,\sigma)$ is
the graded 
dimension of the eigenspace of $A(\sigma)$ corresponding to the eigenvalue
$\mu_k^2$.
\item At the point $s=0$ of order $2m_{\s}(0,\sigma)-c_1(\sigma)$
if $\sigma=w_0\sigma$ and of order $m_{\s}(0,\sigma)$ if $\sigma\neq
w_0\sigma$. Here $c_1(\sigma)$ is as in \eqref{Resfrml}.
\item At the points $-\beta(\sigma)$ where $\beta(\sigma)$ are the poles of
$\det{\left(I(\sigma)\circ
\mathbf{C}(\nu_\sigma:\sigma:s) \right)}$ on $(0,n]$. The order at
$\beta(\sigma)$ is
$-m(\beta(\sigma))$, where $m(\beta(\sigma))$ is the corresponding multiplicity
divided by
$\dim(\nu_\sigma)$ as in section \ref{secC}. 
\item If $n$ is even at the points $\eta(\sigma)$ of order $m(\eta(\sigma))$  
and if $n$ is odd at the points $\eta(w_0\sigma)$ of order
$m(\eta(w_0\sigma))$. Here
$\eta(\sigma)$ resp. $\eta(w_0\sigma)$ are the poles of
$\det{\left(I(\sigma)\circ
\mathbf{C}(\nu_\sigma:\sigma:s) \right)}$ resp. $\det{\left(I(\sigma)\circ
\mathbf{C}(\nu_\sigma:\sigma:s) \right)}$ with negative real part
and $m(\eta(\sigma)), m(\eta(w_0\sigma)$ are the corresponding multiplicities
divided by
$\dim(\nu_\sigma)$ as in section \ref{secC}.
\end{enumerate}
Moreover, the Selberg zeta function $Z(s,\sigma)$ has 
singularities which depend
on $\Gamma$ only via $p$, the number of cusps of $\Gamma$. If all $k_j(\sigma)$
are integers, they are
located at the negative integers and if all $k_j(\sigma)$ are half integers,
they are located at the
negative half integers.  \\
Explicitly, if all $k_j(\sigma)$ are integers, they are given as follows.
\begin{enumerate}
\item At the points $-l$, $l\in\N$, $l\geq m_0$ of  order
$-p\dim(\sigma)$.
\item At the points $-l$, $l\in\N$, $m_0\leq l<
\left|k_{j}(\sigma)\right|+\rho_{j}$,
$j=2,\dots,n+1$ of order $pc_
{j,l}(\sigma)$.
\end{enumerate}
Here $m_0$ and the $c_{j,l}(\sigma)$ are as in Proposition \ref{Para}.
If all $k_j(\sigma)$ are half integers, they can be described in the same way if
$\N$
is replaced by $\frac{1}{2}\N-\N$. \\
The above enumeration exhausts all possible singularities of non-zero order of
$Z(s,\sigma)$ if at overlapping singularities the orders are added up.
\end{thrm}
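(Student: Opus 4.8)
The plan is to deduce Theorem~\ref{AnconctZ} by combining the two logarithmic–derivative formulas already obtained, namely Proposition~\ref{PropSymS} for the symmetric Selberg zeta function $S(s,\sigma)$ and Proposition~\ref{PropAsymS} for the antisymmetric one $S_a(s,\sigma)$, and then recovering $Z(s,\sigma)$ itself via
\[
Z(s,\sigma)^2 = S(s,\sigma)\,S_a(s,\sigma),\qquad Z(s,w_0\sigma)^2=\frac{S(s,\sigma)}{S_a(s,\sigma)}
\]
when $\sigma\neq w_0\sigma$, and simply $Z(s,\sigma)=S(s,\sigma)$ when $\sigma=w_0\sigma$. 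First I would treat the case $\sigma=w_0\sigma$: Proposition~\ref{PropSymS} expresses $\Xi'(s,\sigma)/\Xi(s,\sigma)$ as a sum of simple-pole terms $2s\,m_{\s}(\lambda_k,\sigma)/(s^2+\lambda_k)$, of the scattering terms involving $\beta(\sigma)$ and $\eta(\sigma)$, of the cusp terms $1/(l+s)$ with the integers $c_{j,l}(\sigma)$, of $K(\sigma,s)=-c_1(\sigma)/s$, and of entire terms (the polynomials $C(\lambda_k;s)$, $sC(\sigma;s)$ and the contribution of the prefactor in the definition of $\Xi$). Since a meromorphic function on $\C$ is, up to an entire nowhere-zero factor $e^{g(s)}$, determined by its divisor, and since the logarithmic derivative has only simple poles with integer residues, I can integrate term by term: the residue at $s=\pm i\sqrt{\lambda_k}$ reads off the order $m_{\s}(\lambda_k,\sigma)$, the residue at $s=0$ coming from the $\lambda_0=0$ term $2s\,m_{\s}(0,\sigma)/s^2$ together with $K(\sigma,s)$ gives $2m_{\s}(0,\sigma)-c_1(\sigma)$, the $\beta(\sigma)$- and $\eta(\sigma)$-terms give the scattering singularities with the stated signs and multiplicities, and the $1/(l+s)$ terms — after undoing the explicit $\Gamma$-factor $\Gamma(1+s)^{-p\epsilon(\sigma)\dim\sigma}$ and the polynomial/exponential prefactors built into $\Xi$ — produce precisely the cusp singularities at the negative integers with orders $-p\dim(\sigma)$ and $pc_{j,l}(\sigma)$. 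The entire polynomial terms contribute no zeros or poles and are absorbed into the ambiguity $e^{g(s)}$, which is irrelevant for the location and order of singularities. The absolute convergence of all the sums (asserted in Proposition~\ref{PropSymS} and guaranteed by the Weyl-law bound \eqref{Wlaw} and \eqref{SpC}) is what makes the term-by-term integration legitimate and produces a genuinely meromorphic continuation to $\C$.

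For the case $\sigma\neq w_0\sigma$, I would carry out the same integration for $S(s,\sigma)$ using Proposition~\ref{PropSymS} (now with $\epsilon(\sigma)=2$, so the $m_{\s}$-terms come with a factor $2$ and $K(\sigma,s)=0$), and separately for $S_a(s,\sigma)$ using Proposition~\ref{PropAsymS}, whose logarithmic derivative has simple poles at $\pm i\mu_k$ with residues $d(\mu_k,\sigma)$ and $-d(-\mu_k,\sigma)$ and at the scattering points $\eta(\sigma)$, $\eta(w_0\sigma)$ with the sign $(-1)^n$ resp. $(-1)^{n+1}$. Then $2\log Z(s,\sigma)=\log S(s,\sigma)+\log S_a(s,\sigma)$, so the order of $Z(s,\sigma)$ at a spectral point $i\mu_k$ is
\[
\tfrac12\bigl(m_{\s}(\mu_k^2,\sigma)+d(\mu_k,\sigma)-d(-\mu_k,\sigma)\bigr),
\]
which is exactly item~(2); at $s=0$ the $S_a$-part has residue $d(0,\sigma)-d(0,\sigma)=0$ while the $S$-part has residue $2\cdot 2\,m_{\s}(0,\sigma)/2=2m_{\s}(0,\sigma)$, hence $Z$ has order $m_{\s}(0,\sigma)$ there, which is item~(3). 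At the scattering point $\beta(\sigma)\in(0,n]$ only $S(s,\sigma)$ contributes (Proposition~\ref{PropAsymS} has no such term), giving order $-m(\beta(\sigma))$ for $Z$, and at a point $\eta$ of negative real part the $S$-contribution $m(\eta(\sigma))$ combines with the $S_a$-contribution, where the $(-1)^n$-signs make the $\eta(w_0\sigma)$ (resp. $\eta(\sigma)$) terms cancel when $n$ is odd (resp. even), leaving exactly item~(5). Finally the $\Gamma$-factor and cusp terms in $S(s,\sigma)$ are halved but doubled by $\epsilon(\sigma)=2$, so the cusp singularities of $Z(s,\sigma)$ at negative integers again have orders $-p\dim(\sigma)$ and $pc_{j,l}(\sigma)$, which is the last enumeration; $S_a$ contributes nothing at the cusp points.

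The half-integral case is handled by the remark following Proposition~\ref{Para} and the parallel remarks in sections~\ref{secsymzeta} and \ref{sectwDO}: one repeats the argument with $\N$ replaced by $\tfrac12\N-\N$ in the distribution $\mathcal I$, and everything else goes through verbatim. I expect the main obstacle to be \emph{bookkeeping of overlapping singularities and signs}: a spectral point $i\mu_k$ may coincide with a cusp point $-l$ or with a scattering point $\eta$, and one must be careful that the various residues add rather than interfere, which is why the theorem is phrased with the proviso "if at overlapping singularities the orders are added up." Concretely, the delicate points are (i) checking that the residue at $s=0$ is computed correctly, since both the $\lambda_0=0$ term and $K(\sigma,s)$ feed into it and the factor-of-two discrepancy between $S$ and $Z$ must be tracked; (ii) verifying that the polynomial terms $C(\lambda_k;s)$, $C(\mu_k;s)$, $sC(\sigma;s)$ really integrate to entire functions with no zeros contributing to the divisor — this uses that they are polynomials of bounded degree and that the whole series converges, so that exchanging sum and integral and exponentiating is valid; and (iii) matching the sign conventions $(-1)^n$ from Proposition~\ref{FTDir} with the relation $\check\sigma=\sigma$ or $w_0\sigma$ from \cite[section 3.2.5]{Goodman}, which governs whether the $\eta(\sigma)$- or the $\eta(w_0\sigma)$-contribution survives in item~(5). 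Once these sign-and-multiplicity checks are done, the meromorphic continuation and the enumeration of singularities follow formally by exponentiating the (absolutely convergent) partial-fraction expansions of the logarithmic derivatives.
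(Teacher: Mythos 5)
Your proposal follows the paper's own proof essentially verbatim: the paper also writes $Z'(s,\sigma)/Z(s,\sigma)=\tfrac12\bigl(S'(s,\sigma)/S(s,\sigma)+S_a'(s,\sigma)/S_a(s,\sigma)\bigr)$ for $\sigma\neq w_0\sigma$ (and $Z=S$ for $\sigma=w_0\sigma$), reads off the residues from Propositions \ref{PropSymS} and \ref{PropAsymS}, and tracks the sign cancellations at the $\eta$-points exactly as you describe. The only step the paper makes explicit that you pass over is the verification that the a priori half-integral orders in item (2) are in fact integers, which follows from $\mathcal{E}(\mu_k^2)=\mathcal{E}(\mu_k)\oplus\mathcal{E}(-\mu_k)$, so that $m_{\s}(\mu_k^2,\sigma)+d(\mu_k,\sigma)-d(-\mu_k,\sigma)$ is even; this is a short but necessary check for the exponentiation argument to produce a single-valued meromorphic function.
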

\begin{proof}
Assume that all $k_j(\sigma)$ are integers. If $\sigma=w_0\sigma$, the Theorem
follows 
from Remark \ref{RmrkC} and Proposition \ref{PropSymS}.
It remains to consider the case that $\sigma\neq w_0\sigma$. By Proposition
\ref{DsqA}, the eigenvalues $\lambda_k$ of $A(\sigma)$ are given 
by the $\mu_k^2$, where $\mu_k$ are the eigenvalues of $D(\sigma)$. Moreover one
has $Z'(s,\sigma)/Z(s,\sigma)=\frac{1}{2}(S'(s,
\sigma)/S(s,\sigma)+S_{a}'(s,\sigma)/S_{a}(s,\sigma))$. Thus, combining
Proposition 
\ref{PropSymS}
and Proposition \ref{PropAsymS}, it follows that
the logarithmic derivative of $Z(s,\sigma)$ has a meromorphic continuation to
$\C$ and that its poles with the corresonding residues are as in the
enumeration. It 
remains to show that its residues are integral. 
For every $k$ let
$\mathcal{E}(\pm\mu_k)$
be the eigenspaces of $D(\sigma)$ corresponding to the eigenvalue  $\pm\mu_k$ 
and let $\mathcal{E}(\mu_k^2)$ be the eigenspace of $A(\sigma)$ corresponding to
the eigenvalue $\mu_k^2$.
Then one has
$\mathcal{E}(\mu_k^2)=\mathcal{E}(\mu_k)\oplus\mathcal{E}(-\mu_k)$.
Recall that $m_{\s}(\mu_k^2,\sigma)=\dim_{\gr}\mathcal{E}(\mu_k^2)$. Thus
$m_{\s}(\mu_k^2,\sigma)+d(\mu_k,\sigma)-d(-\mu_k,\sigma)$
is even. If all $k_j(\sigma)$ are half-integral, one proceeds in the 
same way by making the appropriate modifications in Proposition \ref{Para} resp.
Proposition \ref{PropSymS}. 
\end{proof}

\end{document}